\newcommand{\WL}[1]{{\textcolor{black}{#1}}}
\newcommand{\TheTitle}{Numerical modeling of anisotropic ferroelectric materials with hybridizable discontinuous Galerkin methods}
\newcommand{\ShortTitle}{HDG approximation of the \ac{GLD} Model}
\acrodef{DG}{Discontinuous Galerkin Method}
\acrodef{HDG}{Hybridizable Discontinuous Galerkin Method}
\acrodef{FEM}{Finite Element Method}
\acrodef{BEM}{Boundary Element Method}
\acrodef{DPG}{Discontinuous Petrov-Galerkin Method}
\acrodef{FSI}{Fluid Structure Interaction}
\acrodef{ALE}{Arbitrary Lagrangian Eulerian}
\acrodef{IBM}{Immersed Boundary Method}
\acrodef{FE-IBM}{Finite Element Immersed Boundary Method}
\acrodef{IFEM}{Immersed Finite Element Method}
\acrodef{GCE}{geometry--convective explicit}
\acrodef{DD}{domain decomposition}
\acrodef{MG}{multigrid method}
\acrodef{GMRES}{generalized minimal residual method}
\acrodef{CG}{Conjugate Gradient Method}
\acrodef{PCG}{Preconditioned Conjugate Gradient Method}
\acrodef{FET}{field--effect--transistor}
\acrodef{MOSFET}{metal--oxide--semiconductor field--effect--transistor}
\acrodef{SS}{subthreshold swing}
\acrodef{NC}{negative capacitance}
\acrodef{GI}{gate insulator}
\acrodef{FE}{ferroelectric}
\acrodef{MFISFET}{metal--ferroelectric–-insulator–semiconductor FET}
\acrodef{GLD}{Ginzburg--Landau--Devonshire}
\begin{document}

\title[\ShortTitle]{{\TheTitle}}

\author{Wenyu Lei}
\address{School of Mathematical Sciences, University of Electronic Science and Technology of China, No.2006, Xiyuan Ave, West Hi-Tech Zone 611731 Chengdu, China}
\email{wenyu.lei@uestc.edu.cn}

\thanks{The authors would like to thank Dr. Luca Heltai and Dr. Milan Pe{\v{s}}i{\'{c}} for the usefull disscussions on the modeling as well as comments on this manuscript.}

\date{Draft version of \today.}
\keywords{Ferrorelectric materials, Gibbs free energy, the Ginzburg-Landau-Devonshire model, well-posedness, semi-implicit time discretization, hybridizable discontinuous Galerkin method}
\subjclass[2010]{
	35A01, 
	35A02, 
	65M60
}

\begin{abstract}
	We investigate a gradient flow structure of the Ginzburg--Landau--Devonshire (GLD) model for anisotropic ferroelectric materials by reconstructing its energy form. We show that the modified energy form admits at least one minimizer. Under some regularity assumptions for the electric charge distribution and the initial polarization field, we prove that the $L^2$ gradient flow structure has a unique solution. To simulate the GLD model numerically, we propose an energy-stable semi-implicit time stepping scheme and an hybridizable discontinuous Galerkin method for space discretization. Some numerical tests are provided to verify the stability and convergence of the proposed numerical scheme as well as some properties of ferrorelectric materials.
\end{abstract}

\maketitle



\section{Introduction}
\label{s:intro}
\subsection{Motivation}
When designing a \ac{FET} using Metal-Oxide-Semiconductors (\acs{MOSFET}), one important task is to relax power density constraints in order to reduce the energy consumption of electronic devices.
A typical measurement used to assess the performance of a \acs{MOSFET} is given by the \ac{SS} value, i.e., the gate voltage value ($V_g$) which is necessary to vary the drain current ($I_d$) by ten times when it turns on or off.
In conventional \acs{MOSFET}s, where the conducting electrons are thermally injected over the potential barrier at the source/channel junction, the minimum theoretical \ac{SS} value is 60mV/decade at room temperature, which is known as the Boltzmann tyranny.
In most practical realizations, \acs{MOSFET}s have a SS value closer to 70--90mv/decade, due to the presence of a \ac{GI} that absorbs a non-negligible part of the applied gate voltage and require higher voltages to modulate the channel conductance.

A promising solution proposed by Salahuddin and Datta (see \cite{salahuddin2008use}) consists in using \ac{GI}s with \ac{NC} transforming the \ac{GI} into an internal voltage booster rather than a parasitic component.
Salahuddin and Datta suggested using the depoled state of a \ac{FE} thin film for such an internal voltage booster to decrease \ac{SS} far below the Boltzmann limit; we refer to \cite{park2019} for an introduction on the argument.
In contrast to \ac{MFISFET} memory devices, where the two distinguishable remnant polarization states of \ac{FE} thin films are used to represent two memory states, \ac{NC} mode operation requires a stable depoled state for the \ac{FE} thin film in order to sustain the \ac{NC} effect (see, e.g.,~\cite{hoffmann2018stabilization}).
As the MOSFET switching is a dynamic phenomenon, this has a significant implication for the low power operation because it allows a significantly decreased supply voltage for the device.
In this paper, we are interested in the mathematical modeling of the polarization field in an FE material as well as its numerical simulations.

\subsection{The \texorpdfstring{\ac{GLD}}{GLD} model}
A conventional FET at a constant temperature can be modeled using a Poisson equation for the potential $V$ and a charge continuity equations (or drift diffusion equations) for electric and hole densities \cite{markowichstationary}. The polarization field $\vecP=(P_1,\ldots,P_d)^{\tr}$ in the $d$-dimensional Euclidean space with $d\le 3$ is usually modeled as an explicit function of the electric field $\vecE = -\nabla V$. For example, in dielectric or insulating materials, $\vecP = \epsilon_0 \chi\vecE$, where $\epsilon_0$ is the permittivity of vacuum and $\chi$ is a function of the location. When considering \ac{FE} materials, the potential needs to be remodeled together with the polarization. Here we apply the Ginzburg--Landau--Devonshire model \cite{salahuddin2008use}: given a device occupying the material domain $\Omega$, we want to find $V$ and $\vecP$ that minimize the following \WL{Gibbs} free energy
\begin{equation}\label{e:energy-general}
  \widetilde G(V,\vecP) = \int_\Omega \bigg(\rho V + \vecP\cdot \nabla V -\frac{\epsilon_0\epsilon_b }2 |\nabla V|^2 + F_L(\vecP) + F_G(\{P_{i,j}\})\bigg)\diff x,
\end{equation}
where $\rho$ is the electric charge distribution, $\epsilon_b$ is the relative background permittivity of the \ac{FE} material and $g$ is the domain-wall coupling constant. $F_L(\vecP)$ is a Landau-type free-energy density functional and $F_G(\{P_{i,j}\})$ with $P_{i,j}=\partial_{x_j} P_i$ is a Ginzburg functional \cite{hlinka2006phenomenological, hu1997computer, nambu1994domain, gao2002domain}. For example, for the BaTiO${}_3$-type \ac{FE} materials, $F_L(\vecP)$ can be expanded with terms up to at least sixth order (cf. \cite{devonshire1949xcvi, hlinka2006phenomenological}), namely,
\[
  \begin{aligned}
    F_L(\vecP) & = \alpha_1(P_1^2+P_2^2+P_3^2) + \alpha_{11}(P_1^4+P_2^4+P_3^4)                                                \\
               & \qquad+\alpha_{12}(P_1^2P_2^2+P_2^2P_3^2+P_1^2P_3^2)+\alpha_{111}(P_1^6+P_2^6+P_3^6)                          \\
               & \qquad +\alpha_{112}[P_1^4(P_2^2+P_3^2)+P_2^4(P_1^2+P_3^2)+P_3^4(P_1^2+P_2^2)] +\alpha_{123}P_1^2P_2^2P_3^2 ,
  \end{aligned}
\]
and $F_G(\{P_{i,j}\})$ can be written as
\[
  \begin{aligned}
    F_G(\{P_{i,j}\}) & = \frac{G_{1,1}}{2}(P_{1,1}^2+P_{2,2}^2+P_{3,3}^2)                                        \\
                     & \qquad + G_{12}(P_{1,1}P_{2,2}+P_{2,2}P_{3,3}+P_{1,1}P_{3,3})                             \\
                     & \qquad + \frac{G_{4,4}}{2} [(P_{1,2}+P_{2,1})^2+(P_{2,3}+P_{3,2})^2+(P_{3,1}+P_{1,3})^2].
  \end{aligned}
\]
\subsubsection*{Simple anisotropic \ac{FE} materials}
Some materials (such as HfO${}_2$, cf. \cite{ikegami2019tcad}, see also \cite{saha2017ferroelectric,hoffmann2018stabilization}) may be modeled as anisotropic \ac{FE} materials, \ie the \ac{FE} \WL{behavior} happens mostly along a certain axis and dielectric \WL{behavior} is assumed for other axes. In such cases, we may model the total energy as
\begin{equation}\label{e:energy-component-wise}
  G(V,\vecP)  = \int_\Omega \bigg(\rho V + \vecP\cdot \nabla V -\frac{\epsilon} 2 |\nabla V|^2
  + \sum_{i=1}^d (F(P_i) + \frac{g_i}2 |\nabla P_i|^2) \bigg) \diff x .
\end{equation}
Here $F(P_i) := \alpha_i P_i^2 + \beta_i P_i^4 + \gamma_i P_i^6$, $\epsilon=\epsilon_0\epsilon_b$ and $\alpha_i,\beta_i,\gamma_i$ are \ac{FE} anisotropic constants. In terms of the FE anisotropic constants, from what follows, we assume that for each $i=1,\ldots,d$,
\begin{itemize}
  \item $P_i$ follows the dielectric property when $\alpha_i>0$ and $\beta_i,\gamma_i,g_i = 0$ or
  \item $P_i$ follows the \ac{FE} property when
        $\gamma_i,g_i>0$ (e.g. \cite{hoffmann2018stabilization}) or $\gamma_i=0$, $\beta_i,g_i>0$ (e.g. \cite{saha2017ferroelectric, ikegami2019tcad}).
\end{itemize}
Given a nanodevice occupying the region $\Omega$, we denote with $\Omega_{FE}\subset \Omega$ the domain of an \ac{FE} material. Following \cite{salahuddin2008use}, $\vecP$ vanishes in $\Omega\backslash \Omega_{FE}$. To simplify the treatment in this work, we set $\Omega_{FE} = \Omega$ so that the polarization field vanishes on $\partial\Omega$, the boundary of $\Omega$. In terms of boundary conditions for the potential, we set $\Gamma_D$ and $\Gamma_N$ to be disjoint subsets of $\partial\Omega$ such that $\Gamma_D\cup\Gamma_N = \partial\Omega$. We assume that $\Gamma_D\neq \emptyset$. For the simplicity of our discussion, we let the potential $V$ vanishes on $\Gamma_D$ and satisfying the following zero Neumann boundary condition on $\Gamma_N$:
\[
  \vecD\cdot \nu :=(\epsilon \vecE +\vecP)\cdot\nu = 0,\quad\text{ on }\Gamma_N .
\]
Here $\vecD$ is also referred to as the displacement field and $\nu$ denotes the outward normal vector. Noting that since $\vecP$ vanishes on $\partial\Omega$, the above Neumann boundary condition is also equivalent to
\[
  \vecE\cdot \nu = 0, \quad \text{ on } \Gamma_N .
\]

\subsection{Our contributions}
We shall search for a minimizer of the \WL{Gibbs} free energy $G(V,\vecP)$ incorporate with the boundary conditions provided in the previous subsection. According to \cite{lenarczyk2016physical,ikegami2019tcad} (see also \cite{penrose1990thermodynamically} for a general framework), we only search for the minimizer along the path for $P_i$ when $P_i$ follows the \ac{FE} property. More precisely speaking, we seek the solution $(V,\vecP)$ satisfying $\vecP(t=0) = \vecP_0$ and
\begin{equation}\label{e:system-strong}
  \begin{aligned}
    \delta_V G            & = 0,                    &  &                                                           \\
    \delta_{P_i} G        & = 0, \quad              &  & \text{ if } P_i \text{ follows  the dielectric property}, \\
    \rho_i \partial_t P_i & = -\delta_{P_i} G,\quad &  & \text{ if } P_i \text{ follows  the FE property} .
  \end{aligned}
\end{equation}
Here $\delta$ denotes the Fr\`echet derivative and $\rho_i>0$ is the viscosity constant. Motivated by \cite{albinus2002thermodynamic} (see also \cite{mielke2011gradient} for the drift-diffusion system), we provide a modified energy form in Proposition~\ref{p:gf-equivalent} such that the corresponding $L^2$-gradient flow coincides with the system \eqref{e:system-strong}. Such energy form will help us construct stable numerical schemes approximating the minimizer. We further show in Proposition~\ref{p:min-exist} that the minimization problem associated with the new energy form admits at least one minimizer.

We note that unlike the analysis for the classical Ginzburg--Landau system, or the Allen--Cahn equation (see e.g. \cite{bartels2011robust, feng2003numerical}), the polarization equation contains the coupling term $\GRAD V$, which is a vector-valued function of $\vecP$  associated with the Poisson equation $\delta_V G=0$. To show the well-posedness of the weak form of the system \eqref{e:system-strong}, we utilize the energy estimates in \cite{feng2003numerical} as well as the elliptic regularity for the Poisson equation. In Theorem~\ref{t:existence}, assuming that charge distribution $\rho$ is smooth enough, we prove that the weak solution exists if the initial polarization field is bounded in the energy space. Moreover, the weak solution is unique if $\Delta \vecP_0$ is bounded in the $L^2$ sense; see Theorem~\ref{t:uniqueness}.

We next simulate an \ac{FE} material by discretizing the problem \eqref{e:system-strong}. \WL{We shall first discretize the time derivative using the backward Euler method. In terms of computation of the nonlinear term in \eqref{e:system-strong} for each time step, perhaps a proper treatment is to linearize the term by the Newton's method (see e.g. \cite[Algorithm~6.1.(iii)]{bartels2015numerical}). Such technique may lead to the time-stepping method conditionally stable with respect to $\rho_i$ and $g$, namely we may require that the time step $\tau\le c\rho_i/g$ for some positive constant $c$. We may also need to guarantee that the discrete maximum principle holds, \ie at each time step, the $L^\infty$ norm of the polarization field is uniformly bounded; see e.g. Proposition~6.6 of \cite{bartels2015numerical}. So it is not easy in practice to tune the time stepping scheme when simulating for different ferroelectric materials.} In this work, we instead propose a semi-implicit scheme by splitting the gradient form provided in Proposition~\ref{p:gf-equivalent} with a convex and a concave part (see e.g. Chapter 6 of \cite{bartels2015numerical}). Such decomposition guarantees that the time-stepping method is unconditionally energy stable; see Proposition~\ref{p:stable}. \WL{Note that the nonlinear terms could be dominant during the evolution of the solutions. The fluxes for polarization field at the boundary could relatively large and thus create boundary layers. In order to capture these boundary layers in numerical simulations, a common strategy is to use discontinuous Galerkin methods to guarantee that (numerical) fluxes across faces of domain subdivisions are (weakly) continuous. Based on the time-stepping scheme, we consider a \ac{HDG} method which is originally proposed by \cite{cockburn2009unified}. One of advantages of \ac{HDG} schemes is that even though the degrees of freedom  are  defined on both cells and faces of a subdivision of the material domain, due to the hybridization (or static condensation) property, the degrees of freedom on cells \WL{can be eliminated} from the discrete system. Hence, the size of of discrete system to be solved for HDG discretization schemes is relatively smaller compared to regular discontinuous Galerkin schemes.}

The rest of the paper is organized as follows. In Section~\ref{s:min}, we provide a modified energy from for the weak form of \eqref{e:system-strong} and show that the corresponding minimization problem admits at least one minimizer. We show the existence and uniqueness of the weak solution of \eqref{e:system-strong} in Section~\ref{s:well-posedness}. Section~\ref{s:numerics} provides a semi-implicit time discretization and an HDG space discretization scheme, respectively. Using our proposed numerical method, we present some convergence tests as well as numerical tests in a monolayer device in Section~\ref{s:simulation}.

\section{A minimization problem}\label{s:min}
In this section we consider a minimization problem derived from the energy form \eqref{e:energy-component-wise} and show that at least one minimizer exists. Let us first introduce some notations. Let $\Omega$ be a bounded domain with Lipschitz boundary. Given an integer $k\ge 0$ and a real number $p>1$, we denote $W^{k,p}(\Omega)$ the standard Sobolev space and we let $H^{k}(\Omega) := W^{k,2}(\Omega)$. Let $\Hunz:=H^1_0(\Omega)$ be the subspace of $H^1(\Omega)$ whose functions vanishes at the boundary and
\[
  \mathcal V_D = \{w\in H^1(\Omega) : w = 0\text{ on }\Gamma_D\} .
\]
Finally, we denote $(\cdot,\cdot)$ and $\|\cdot\|$ the $L^2(\Omega)$ inner product and the $L^2(\Omega)$ norm, respectively. For two vector fields $\vecP,\vecQ\in [H^1(\Omega)]^d$, we set
\[
  (\GRAD\vecP,\GRAD\vecQ)=\sum_{i=1}^d(\GRAD P_i,\GRAD Q_i) .
\]

Consider the following minimization problem: find $V\in \Hunz_D$ and $\vecP\in [\Hunz]^d$ so that
\[
  G(V,\vecP) = \min_{\widetilde V\in \Hunz_D\text{ and }\widetilde\vecP\in[\Hunz]^d}
  G(\widetilde V,\widetilde\vecP).
\]
We shall search for a minimizer by solving its $L^2(\Omega)$ gradient flow. According to \cite{ikegami2019tcad, lenarczyk2016physical}, we additionally assume that the potential $V$ and certain components of the polarization field follow a dielectric \WL{behavior} relax instantly in time, \ie for the variational formulation, we have
\begin{equation}\label{e:poisson}
  \delta_V G[w] = -(\epsilon\GRAD V,\GRAD w) + (\rho-\DIV\vecP, w) = 0, \quad\forall w\in \Hunz_D
\end{equation}
and
\begin{equation}\label{e:dielectric}
  \delta_{P_i}G[Q_i] = 2\alpha_i (P_i, Q_i) + (\partial_{x_i}V , Q_i) = 0,\quad \forall Q_i\in \Hunz .
\end{equation}
We also point out that we have applied integration by parts $(\vecP,\GRAD w) = -(\DIV\vecP,w)$ in \eqref{e:poisson}. For $P_i$ that follows the \ac{FE} \WL{behavior}, we have that
\[
  \rho_i\partial_t P_i = - \delta_{P_i}G[Q_i] ,\quad \forall Q_i \in \mathcal V,
\]
or
\begin{equation}\label{e:ferroelectric}
  \rho_i(\partial_t P_i, Q_i) + g_i(\GRAD P_i,Q_i) +(DF(P_i)P_i,Q_i)
  + (\partial_{x_i}V,Q_i) = 0 .
\end{equation}
Here $DF(P_i)=2\alpha + 4\beta P_i^2+6\gamma P_i^4$. The above equation is also referred to as the Landau-Khalatnikov equation \cite{landau1954anomalous}.

\subsection{A modified energy form}
We shall reconstruct an energy form associated with the system \eqref{e:poisson}--\eqref{e:ferroelectric}. Given a polarization field $\vecP\in [\Hunz]^d$, we consider the following energy functional
\begin{equation}\label{e:energy}
  I(\vecP) =\int_\Omega \bigg(\frac{\epsilon}{2}|\GRAD\Vp|^2 + F(\vecP) + \sum_{i=1}^d \frac{g_i}2 |\nabla P_i|^2 \bigg)\diff x,
\end{equation}
where $F(\vecP)=\sum_{i=1}^d F(P_i)$ and $\Vp:=\Phi(\rho-\DIV\vecP)$ with $\Phi : L^2(\Omega)\to \Hunz$ denoting the solution operator so that for $g\in L^2(\Omega)$, $\Phi g$ uniquely solves
\begin{equation}\label{e:Phi}
  (\epsilon\GRAD \Phi g,\GRAD w) = (g,w),\quad\forall w\in \Hunz_D .
\end{equation}
So $V = \Vp$ in \eqref{e:poisson}.
\begin{proposition}\label{p:gf-equivalent}
  We have $I(\vecP) = G(\Vp, \vecP)$ and the corresponding $L^2(\Omega)$ gradient flow coincides with \eqref{e:poisson}--\eqref{e:ferroelectric} by setting $V=\Vp$. In particular,
  \begin{equation}\label{e:gf-alternative}
    \rho_{i}\partial_t P_i = -\delta_{P_i} I(\vecP) ,
  \end{equation}
  with $\rho_{i} = 0$ if $P_i$ follows the dielectric property and $\rho_i>0$ if $P_i$ follows the \ac{FE} property.
\end{proposition}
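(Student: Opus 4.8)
The plan is to prove the two assertions in turn: first the identity $I(\vecP)=G(\Vp,\vecP)$ valid for every $\vecP\in[\Hunz]^d$, and then the identification of the $L^2(\Omega)$ gradient flow of $I$ with the system \eqref{e:poisson}--\eqref{e:ferroelectric}.

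For the identity, I would substitute $V=\Vp$ into \eqref{e:energy-component-wise} and rewrite the potential-dependent part. Since $\vecP\in[\Hunz]^d$ vanishes on $\partial\Omega$, integration by parts gives $(\vecP,\GRAD\Vp)=-(\DIV\vecP,\Vp)$, so the first three integrands of $G(\Vp,\vecP)$ sum to $(\rho-\DIV\vecP,\Vp)-\tfrac{\epsilon}{2}\|\GRAD\Vp\|^2$. Testing the defining relation \eqref{e:Phi} of $\Phi$, with $g=\rho-\DIV\vecP$, against the admissible function $w=\Vp$ gives $(\rho-\DIV\vecP,\Vp)=\epsilon\|\GRAD\Vp\|^2$, so that contribution collapses to $\tfrac{\epsilon}{2}\|\GRAD\Vp\|^2$, which is exactly the electrostatic term in the definition \eqref{e:energy} of $I$; the Landau and Ginzburg terms coincide in the two functionals, hence $G(\Vp,\vecP)=I(\vecP)$.

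For the gradient flow, I would compute the Fr\'echet derivative $\delta_{P_i}I(\vecP)[Q_i]$ for $Q_i\in\Hunz$ by perturbing only the $i$-th component, $P_i\mapsto P_i+\tau Q_i$ with the remaining components held fixed. Linearity of $\Phi$ then forces $\Vp\mapsto\Vp-\tau\,\Phi(\partial_{x_i}Q_i)$, so $\dot V:=\tfrac{d}{d\tau}\big|_{\tau=0}\Vp=-\Phi(\partial_{x_i}Q_i)$. The Ginzburg term differentiates into $g_i(\GRAD P_i,\GRAD Q_i)$ and the Landau term into $(F'(P_i),Q_i)=(DF(P_i)P_i,Q_i)$; the electrostatic term differentiates into $\epsilon(\GRAD\Vp,\GRAD\dot V)$, and here lies the only delicate point. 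Using the symmetry of the form $(\epsilon\GRAD\cdot,\GRAD\cdot)$ together with the equation $(\epsilon\GRAD\dot V,\GRAD w)=-(\partial_{x_i}Q_i,w)$ satisfied by $\dot V$ for all admissible $w$ (a consequence of \eqref{e:Phi}), I take $w=\Vp$ and integrate by parts --- the boundary term drops since $Q_i=0$ on $\partial\Omega$ --- to obtain $\epsilon(\GRAD\Vp,\GRAD\dot V)=-(\partial_{x_i}Q_i,\Vp)=(\partial_{x_i}\Vp,Q_i)$. Collecting the three contributions and writing $V=\Vp$,
\[
	\delta_{P_i}I(\vecP)[Q_i]=(\partial_{x_i}V,Q_i)+(DF(P_i)P_i,Q_i)+g_i(\GRAD P_i,\GRAD Q_i).
\]
In the dielectric case $\beta_i=\gamma_i=g_i=0$ this is precisely $\delta_{P_i}G[Q_i]$ as in \eqref{e:dielectric}, so \eqref{e:gf-alternative} with $\rho_{v,i}=0$ reproduces \eqref{e:dielectric}; in the ferroelectric case it is the spatial part of \eqref{e:ferroelectric}, so \eqref{e:gf-alternative} with $\rho_{v,i}=\rho_i>0$ reproduces \eqref{e:ferroelectric}. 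Finally $V=\Vp$ satisfies \eqref{e:poisson} directly by the definition of $\Phi$.

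The conceptual crux --- and the step I expect to require the most care --- is the disappearance of the chain-rule cross term: differentiating $G(\Vp,\vecP)$ in $P_i$ one would in principle also pick up $\delta_VG(\Vp,\vecP)[\dot V]$, which vanishes only because $\Vp$ was chosen to solve the Poisson equation $\delta_VG=0$; the computation above makes this cancellation explicit through the self-adjointness of the solution operator $\Phi$. Everything else is routine: well-posedness of \eqref{e:Phi} follows from Lax--Milgram, and the differentiability of $I$ on $[\Hunz]^d$ uses that $\vecP\mapsto\Vp$ is affine and bounded (through $\DIV\vecP\in L^2(\Omega)$) together with the Sobolev embedding $\Hunz\hookrightarrow L^6(\Omega)$, valid for $d\le 3$, to control the sixth-order Landau term.
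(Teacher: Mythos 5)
Your proof is correct and follows essentially the same route as the paper: the identity $I(\vecP)=G(\Vp,\vecP)$ by testing the Poisson equation with $w=\Vp$, and the gradient-flow identification by differentiating the electrostatic energy through the linear solution operator $\Phi$ and using its self-adjointness to turn $\epsilon(\GRAD\Vp,\GRAD\dot V)$ into $(\partial_{x_i}\Vp,Q_i)$. The only difference is that you carry out this last computation explicitly (which, since $H(g)=\tfrac{\epsilon}{2}\int_\Omega|\GRAD\Phi g|^2\,\mathrm{d}x$ is quadratic, is exact with no remainder), whereas the paper cites Lemma~6.1 of \cite{albinus2002thermodynamic} for $\delta_g H(g)[w]=(\Phi g,w)$; your version is self-contained and also makes explicit the cancellation of the chain-rule cross term $\delta_V G[\dot V]=0$.
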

\begin{proof}
  Choosing $w=\Vp$ in \eqref{e:poisson} we get
  \[
    \int_\Omega\frac\epsilon2 |\GRAD\Vp|^2 \diff x
    = \int_\Omega \rho \Vp + \vecP\cdot \GRAD \Vp - \frac\epsilon2 |\GRAD\Vp|^2 \diff x.
  \]
  This implies that $I(\vecP) = G(\Vp, \vecP)$ according to \eqref{e:energy-component-wise}.
  To obtain the gradient flow from $I(\vecP)$, it suffices to show that the differential of $\vecP\cdot\GRAD\Vp$ is $\GRAD \Vp$. Letting $H(g)=\tfrac\epsilon2\int_\Omega|\GRAD\Phi g|^2\diff x$, Lemma~6.1 of \cite{albinus2002thermodynamic} implies that $\delta_g H(g)[w] = (\Phi g, w)$, with $w\in \Hunz$. So  given $\vecQ\in [C^\infty_0(\Omega)]^d$,
  \[
    \begin{aligned}
      H(\rho-\DIV(\vecP+\vecQ)) & -H(\rho-\DIV\vecP)
      \\
                                & =
      \delta_g H(\rho-\DIV\vecP)[-\DIV\vecQ] + O(\|\DIV\vecQ\|)           \\
                                & = -(\Vp,\DIV\vecQ) + O(\|\DIV\vecQ\|) .
    \end{aligned}
  \]
  Using integration by parts for the first term on the right hand side above, we conclude that $\delta_\vecP H(\rho-\DIV\vecP)[\vecQ] = (\GRAD\Vp, \vecQ)$, as desired.
\end{proof}

\subsection{Existence of minimizers}
We shall apply the direct method in the calculus of variation to show that the following minimization problem
\begin{equation}\label{e:min-I}
  I(\vecP)  = \min_{\tilde\vecP\in [\Hunz]^d} I(\tilde\vecP)
\end{equation}
has at least one minimizer.

\subsubsection*{Weakly lower semicontinuity for the higher order terms}
Let us first verify that
\[
  \int_\Omega \bigg(\frac{\epsilon}{2}|\GRAD\Vp|^2 + \sum_{i=1}^d \frac{g_i}2 |\nabla P_i|^2 \bigg)\diff x
\]
is weakly lower semicontinuous and $I(\vecP)$ is coercive.
It is well known that the second term above is weakly lower semicontinuous (see e.g., \cite[Example~2.1]{bartels2015numerical}). Here we check for $\int_\Omega \frac\epsilon2|\GRAD V_{\vecP}|^2$. Let $\{\vecP_n\}_{n\in\mathbb N}$ be a sequence in $[\Hunz]^d$ such that $\vecP_n$ weakly converges to $\vecP$ in $[\Hunz]^d$. Note that
\[
  \begin{aligned}
     & \int_\Omega \frac\epsilon2 |\GRAD V_{\vecP}|^2\diff x - \int_\Omega \frac\epsilon2 |\GRAD V_{\vecP_n}|^2\diff x
    +\int_\Omega \frac\epsilon2 |\GRAD (\Vp-V_{\vecP_n})|^2\diff x                                                                               \\
     & \quad\quad\quad\quad= \int_\Omega \epsilon \GRAD \Vp\cdot\GRAD(\Vp-V_{\vecP_n})\diff x= \int_\Omega (\vecP-\vecP_n)\cdot\GRAD\Vp \diff x,
  \end{aligned}
\]
where we applied \eqref{e:poisson} for both $\Vp$ and $V_{\vecP_n}$ in the last equality above. Letting $n\to\infty$ yields
\[
  \int_\Omega \frac \epsilon2|\GRAD\Vp|^2\diff x\le\liminf_{n\to\infty}\int_\Omega \frac \epsilon2|\GRAD V_{\vecP_n}|^2\diff x .
\]

\subsubsection*{Coercivity}
Note for both properties of $P_i$, the polynomial $F(P_i)$ has a minimum $-c_0$ with $c_0\ge 0$. Suppose there is a sequence $\{\vecP_n\}_{n=1}^\infty\subset [\Hunz]^d$ with $\|\vecP_n\|_{H^1(\Omega)}\to \infty$. We have
\begin{equation}\label{i:lb}
  \int_\Omega F(\vecP_n)\diff x \ge -d c_0 |\Omega| .
\end{equation}
Whence,
\begin{equation}\label{i:coercive}
  I(\vecP_n) \ge \frac{g}{2}\|\GRAD \vecP_n\|^2 -d c_0 |\Omega|\to \infty,\quad\text{ as } n\to \infty
\end{equation}
as $n\to\infty$. This implies that $I(\vecP)$ is coercive.

\subsubsection*{Existence of the minimizer}
Now we are ready to show the existence of a minimizer for the problem \eqref{e:min-I}.
\begin{proposition}[existence of a minimizer]\label{p:min-exist}
  Given an \ac{FE} material occupying the domain $\Omega$, let $c_0$ be defined as in \eqref{i:lb}. Then there exists at least one minimizer for problem \eqref{e:min-I}.
\end{proposition}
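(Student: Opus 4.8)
The plan is to apply the direct method in the calculus of variations, assembling the three ingredients that have just been established in the excerpt: coercivity of $I$ on $[\Hunz]^d$, weak lower semicontinuity of the quadratic terms $\tfrac\epsilon2\|\GRAD\Vp\|^2 + \sum_i \tfrac{g_i}{2}\|\GRAD P_i\|^2$, and control of the potential-energy term $\int_\Omega F(\vecP)\diff x$ under weak convergence. First I would take a minimizing sequence $\{\vecP_n\}\subset[\Hunz]^d$, i.e. $I(\vecP_n)\to \inf_{\tilde\vecP} I(\tilde\vecP)$. By the coercivity estimate \eqref{i:coercive}, $\|\GRAD\vecP_n\|$ stays bounded, and since $\vecP_n\in[\Hunz]^d$ the Poincar\'e inequality gives a uniform bound on $\|\vecP_n\|_{[H^1(\Omega)]^d}$. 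Reflexivity of $[\Hunz]^d$ then yields a subsequence (not relabeled) converging weakly to some $\vecP^\star\in[\Hunz]^d$.

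Next I would pass to the limit along this subsequence. The gradient terms are handled by the weak-lower-semicontinuity argument already given in the excerpt: $\sum_i\tfrac{g_i}{2}\|\GRAD P_i\|^2$ is weakly l.s.c.\ by the standard convexity example cited, and $\tfrac\epsilon2\|\GRAD V_{\vecP_n}\|^2$ is weakly l.s.c.\ by the identity derived just before the proposition (expanding $\|\GRAD(\Vp-V_{\vecP_n})\|^2$ and using \eqref{e:poisson}). For the Landau term $\int_\Omega F(\vecP_n)\diff x$, the key point is that the compact Sobolev embedding $[\Hunz]^d\hookrightarrow\hookrightarrow [L^q(\Omega)]^d$ holds for $q$ large enough to accommodate the sixth-order polynomial (for $d\le 3$, $H^1\hookrightarrow L^6$ is merely continuous, but $H^1\hookrightarrow\hookrightarrow L^q$ compactly for any $q<6$, and in the degenerate cases $\gamma_i=0$ only fourth-order terms appear); hence along a further subsequence $\vecP_n\to\vecP^\star$ strongly in $[L^q]^d$ and a.e.\ in $\Omega$. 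Combining a.e.\ convergence with the uniform lower bound $F(P_i)\ge -c_0$ via Fatou's lemma gives $\int_\Omega F(\vecP^\star)\le\liminf_n\int_\Omega F(\vecP_n)$ — or, more simply, continuity of $\vecP\mapsto\int_\Omega F(\vecP)$ along the strongly convergent subsequence. Adding the three liminf inequalities yields $I(\vecP^\star)\le\liminf_n I(\vecP_n)=\inf_{\tilde\vecP}I(\tilde\vecP)$, so $\vecP^\star$ is a minimizer.

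The main obstacle, such as it is, is the treatment of the non-convex sixth-order Landau term under only weak $H^1$ convergence; convexity arguments fail outright, so one genuinely needs the compact embedding to upgrade to strong $L^q$ (and a.e.) convergence, and one must check that the polynomial growth of $F$ is subcritical with respect to that embedding — this is exactly where $d\le 3$ enters. A minor subtlety is that in the purely FE regime with $\gamma_i>0$ the natural integrability of the minimizing sequence is already built into finiteness of $I$ (since $\|\GRAD\vecP_n\|$ bounded plus Poincar\'e plus $H^1\hookrightarrow L^6$ controls $\int P_{i,n}^6$), whereas a priori one should verify the infimum is finite — which it is, being bounded below by $-dc_0|\Omega|$ from \eqref{i:lb} and finite from above by evaluating $I$ at $\vecP\equiv 0$. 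Everything else is routine bookkeeping.
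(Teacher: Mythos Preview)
Your proposal is correct and follows the same direct-method route as the paper: minimizing sequence, boundedness via coercivity \eqref{i:coercive}, weak subsequential limit in $[\Hunz]^d$, weak lower semicontinuity of the two quadratic terms (already established just before the proposition), and a compactness argument for the Landau term. The paper is in fact slightly less careful than you on the last point: it asserts the \emph{compact} embedding $\Hunz\hookrightarrow L^6(\Omega)$ for $d=1,2,3$ and concludes $\int_\Omega F(\vecP_n)\to\int_\Omega F(\vecP)$, glossing over the fact that in $d=3$ the exponent $6$ is critical and the embedding is only continuous. Your Fatou-based alternative---a.e.\ convergence obtained from the compact embedding into $L^q$ for $q<6$, combined with the uniform lower bound $F\ge -c_0$ from \eqref{i:lb}---yields the required $\liminf$ inequality in all dimensions $d\le 3$ and is the more robust argument here.
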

\begin{proof}
  In view of \eqref{i:coercive}, $I(\vecP)$ is also bounded below. So there exists a sequence $\{\vecP_n\}_{n=1}^\infty\subset [\Hunz]^d$ such that $\lim_{n\to\infty}I(\vecP_n)=\inf_{\widetilde\vecP\in[\Hunz]^d}I(\widetilde\vecP)$. The coercivity of $I(\vecP)$ (cf. \eqref{i:coercive}) implies that $\vecP_n$ is bounded in $[\Hunz]^d$ so there exists a subsequence of $\{\vecP_{n}\}$ weakly converging to some limit $\vecP$ in $[\Hunz]^d$. Here without loss of generality, we denote this subsequence by $\{\vecP_n\}$. Due to the lower semicontinuity of the higher order term in $I(\vecP)$, we have
  \[
    \int_\Omega \frac \epsilon2|\GRAD\Vp|^2 + \sum_{i=1}^d \frac{g_i}2 |\nabla P_i|^2 \diff x\le
    \liminf_{n\to\infty} 	\int_\Omega \frac \epsilon2|\GRAD V_{\vecP_{n}}|^2 + \sum_{i=1}^d \frac{g_i}2 |\nabla P_{ni}|^2 \diff x,
  \]
  with $P_{ni}$ denoting the $i$-th component of $\vecP_n$. On the other hand, by the compactly embedding from $\Hunz$ into $L^6(\Omega)$ for $d=1,2,3$, $\{\vecP_{n}\}$ (again, w.l.o.g. by passing to a subsequence) also strongly converges to $\vecP$ in $L^6(\Omega)$. So
  \[
    \int_\Omega F(\vecP_{n}) \diff x \to \int_\Omega F(\vecP) \diff x
    ,\quad\text{ as } n\to\infty .
  \]
  Combing the above two limits implies that
  \[
    I(\vecP) \le \liminf_{n\to \infty} I(\vecP_{n}) = \lim_{n\to\infty} I(\vecP_{n}) = \inf_{\widetilde\vecP\in[\Hunz]^d} I(\widetilde\vecP).
  \]
  Since $I(\vecP) \ge  \inf_{\widetilde\vecP\in[\Hunz]^d} I(\widetilde\vecP)$ by definition, we conclude that $\vecP$ is a minimizer of the problem \eqref{e:min-I}.
\end{proof}

\begin{remark}[(uniqueness)]\label{r:unique}
  It is sufficient to show that there exists at most one minimizer when $I(P)$ is strictly convex. We  note that $\int_\Omega |\GRAD\Vp|^2$ is convex with respect to $\vecP$. Indeed, as $\int_\Omega|\GRAD V|^2$ is convex for $V$, we have for $\vecP_1,\vecP_2\in [\Hunz]^d$ and $\lambda\in[0,1]$,
  \[
    \begin{aligned}
      \lambda \int_\Omega |\GRAD V_{\vecP_1}|^2\diff x & + (1-\lambda) \int_\Omega |\GRAD V_{\vecP_2}|^2\diff x                       \\
                                                       & \ge \int_\Omega |\GRAD(\lambda V_{\vecP_1}+(1-\lambda)V_{\vecP_2})|^2\diff x \\	                                                                                                        & =\int_\Omega |\GRAD \Phi(\rho - \DIV(\lambda\vecP_1+(1-\lambda)\vecP_2))|^2\diff x \\
                                                       & =\int_\Omega |\GRAD V_{\lambda \vecP_1+(1-\lambda)V_{\vecP_2}}|^2\diff x,
    \end{aligned}
  \]
  recalling that the linear operator $\Phi$ is defined in \eqref{e:Phi}.
  However, we cannot guarantee the convexity due to choice of the constants $\alpha_i,\beta_i,\gamma_i$ inside the Landau-type density functional $\int_\Omega F(P_i)$. Considering the second derivative of $F(P_i)$, a sufficient condition for uniqueness is that for each component $P_i$ following the \ac{FE} property, the quadratic form $p(t) = 30\gamma_i t^2 + 12\beta_i t + 2\alpha_i$ is strictly positive for $t>0$. This leads to the following two conditions:
  \begin{itemize}
    \item $\alpha_i,\beta_i>0$ or
    \item $\gamma_i>0$, $\beta_i <0$ and $3\beta_i^2/(5\gamma_i) < \alpha_i$.
  \end{itemize}
\end{remark}

\section{Well-posedness of the gradient flow}\label{s:well-posedness}
This section is devoted to showing the existence and uniqueness of the weak solution to the $L^2$ gradient flow \eqref{e:poisson}--\eqref{e:ferroelectric}. For simplicity, we assume that all components of the polarization field follow the \ac{FE} property with the same constants denoting by $\rho,\alpha,\beta,\gamma$ and $g$. To discuss the well-posedness, we shall introduce the Bochner-Sobolev spaces. Given a Banach space $B$, $r\ge 1$ and a final  time $T$, the space $L^r(0,T; B)$ is a collection of functions so that for each $w\in L^r(0,T; B)$, the norm
\[
  \|w\|_{L^r(0,T;B)} =
  \left\{
  \begin{aligned}
     & \bigg(\int_0^T\|w\|_B^r \diff t\bigg)^{1/r}, &  & \quad r\in[1,\infty), \\
     & \text{esssup}_{t\in[0,T]}\|w\|_{B},          &  & \quad r=\infty ,
  \end{aligned}
  \right .
\]
is finite. We also define the space $H^1(0,T;B)$ by
\[
  H^1(0,T;B) :=\{u\in L^2(0,T;B) : \partial_t u \in L^2(0,T;B)\} .
\]
Denote $\Hunz'$ the dual space of $\Hunz$ and let $\langle \cdot,\cdot\rangle$ the corresponding duality pairing. A weak formulation of problem \eqref{e:poisson}--\eqref{e:ferroelectric} reads: given an initial polarization $\vecP_0$ and a final time $T$, we want to find $\vecP:=\vecP(t)\in L^2(0,T; [\Hunz]^d)\cap H^1(0,T;[\Hunz']^d)$ such that $\vecP(0) = \vecP_0$ and
\begin{equation}\label{e:gf-l2}
  \begin{aligned}
     & \rho_v\langle\partial_t \vecP, \vecQ\rangle + g(\GRAD\vecP,\GRAD\vecQ)                                                \\
     & \quad\quad+ (\mathbf DF(\vecP),\vecQ) + (\GRAD\Vp, \vecQ) = 0,\quad\forall \vecQ\in[\Hunz]^d \text{ and } t\in (0,T],
  \end{aligned}
\end{equation}
where $\Vp$ is given by \eqref{e:energy} and $\mathbf DF(\vecP)=(DF(P_i) P_i)_{i=1}^d$. Here we also note that for simplicity of our discussion, we set the viscosity constant $\rho_v = \rho_i$ for $i=1,\ldots,d$.

\subsection{Existence}
Thanks to Fredholm alternative, there exist $L^2(\Omega)$-orthonormal eigenfunctions $\{\phi_i\}_{i=1}^\infty$ with increasing eigenvalues $\{\lambda_i\}_{i=1}^\infty\subset (0,\infty)$ satisfying
\[
  (\GRAD \phi_i, \GRAD w) = \lambda_i (\phi_i, w),\quad \forall w\in \Hunz.
\]
Given a positive integer $N$, set $X_N = \text{span}\{\phi_i\}_{i=1}^N$ and we consider the following finite dimensional problem: find $\vecP_N(t)\in [X_N]^d$ such that
\begin{equation}\label{e:gf-l2-n}
  \begin{aligned}
    \rho_v & \langle  \partial_t  \vecP_N  ,\vecQ\rangle  + g(\GRAD\vecP_N,\GRAD\vecQ) &     \\
           & +(\mathbf DF(\vecP_N),\vecQ) +(\GRAD V_{\vecP_N},\vecQ) = 0,              &   &
    \forall \vecQ\in [X_N]^d \text{ and } t \in (0, T],                                      \\
           & (\vecP_N(0),\vecQ) = (\vecP_0,\vecQ),                                     &   &
    \forall \vecQ\in [X_N]^d .
  \end{aligned}
\end{equation}
We first note that if the charge distribution $\rho\in L^\infty(0,T; L^2(\Omega))$, the above problem has a solution. This can be shown by exploiting the fact that $\mathbf DF$ is a bounded operator in $[X_N]^d$ equipped with $[L^2(\Omega)]^d$ norm. Here we sketch one approach of the proof: one can derive that $\delta_{\vecP}I(\vecP)[\vecQ]$ is bounded and semicoercive, namely there exist       positive constants $C,C_1,C_2,C_3$ so that
\[
  \delta_{\vecP}I(\vecP_N)[\vecQ] \le C(\|\vecP_N\|+\|\GRAD\vecP_N\|+\|\rho\|)\|\vecQ\|_{[\Hunz]^d}
\]
and
\[
  \delta_{\vecP}I(\vecP_N)[\vecP_N] \ge C_1\|\GRAD\vecP_N\|^2-C_2\|\vecP_N\|^2-C_3\|\rho\|^2.
\]
Since $I(\vecP)$ is semicoercive and weakly lower semicontinuous, we can follow the argument from \cite[Section 2.3]{bartels2015numerical} to show the existence of $\vecP_N$. That is constructing a solution of $\vecP_N$ by an approximation using the backward Euler scheme.

We next provide some auxiliary estimates related to $\vecP_N$. From what follows, we use $C$ to denote a generic constant independent of $N$.
\begin{lemma}\label{l:estimates}
  Suppose that the initial polarization field $\vecP_0\in [H^1(\Omega)\cap L^\infty(\Omega)]^d$ and the charge distribution $\rho\in H^1(0,T;L^2(\Omega))$. Then for a fixed final time $T$, there holds:
  \begin{enumerate}[(a)]
    \item $\vecP_N\in L^\infty(0,T;[L^2(\Omega)]^d) \cap L^2(0,T;[\Hunz]^d)$ and $V_{\vecP_N}\in L^2(0,T;\Hunz_D)$, \ie there exists a positive constant $C$ independent of $N$ satisfying
          \[
            \max_{t\in[0,T]}\|\vecP_N(t)\|^2 +\int_0^T \|\GRAD \vecP_N(t)\|^2 + \|\vecP_N(t)\|^2 + \|\GRAD V_{\vecP_N}(t)\|^2 \diff t \le C .
          \]
    \item Assume that we have the $H^2(\Omega)$ elliptic regularity in \eqref{e:Phi}, \ie for $g\in L^2(\Omega)$,
          \[
            \|\Phi g\|_{H^2(\Omega)} \le C\|g\|,
          \]
          where the constant $C$ only depends on $d$ and $\Omega$. Then $\vecP_N\in H^1(0,T; L^2(\Omega))$. More precisely,
          \[
            I(\vecP_N(T))+\rho_v\int_0^T \|\partial_t\vecP_N(t)\|^2 \diff t
            \le I(\vecP_0) +\frac12\int_0^T \|\rho_t(t)\|^2 +\|V_{\vecP_N}(t)\|^2\diff t.
          \]
    \item There exists a positive constant $C$ independent of $N$ such that
          \[
            \int_0^T\|\Delta \vecP_N(t)\|^2 \diff t \le C\int_0^T \|\GRAD \vecP_N(t)\|^2 + \|\partial_t\vecP_N(t)\|^2 + \|\rho(t)\|^2\diff t .
          \]
          Under the assumption in part (b), we conclude that $\vecP_N$ is uniformly bounded in $L^2(0,T; H^2(\Omega))$.
  \end{enumerate}
\end{lemma}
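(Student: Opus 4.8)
The plan is to prove the three assertions of Lemma~\ref{l:estimates} by a sequence of energy estimates on the Galerkin system \eqref{e:gf-l2-n}, each one bootstrapping the previous. Throughout I would exploit the fact that $V_{\vecP_N}=\Phi(\rho-\DIV\vecP_N)$ is controlled by $\vecP_N$ through the linear operator $\Phi$, and that by \eqref{e:Phi} one has the a priori bound $\|\GRAD V_{\vecP_N}\|\le C(\|\rho\|+\|\GRAD\vecP_N\|)$ (and, crucially for part (b), $\|V_{\vecP_N}\|\le C\|\Phi(\rho-\DIV\vecP_N)\|$ which, since $\DIV$ is inside, reduces to $\|V_{\vecP_N}\|_{H^1}\le C(\|\rho\|+\|\GRAD\vecP_N\|)$ — no $H^2$ regularity is needed here since we only differentiate once). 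The nonlinear term is handled by the standard sign trick for the Landau polynomial: $(\mathbf{DF}(\vecP_N),\vecP_N) = \sum_i (2\alpha P_i^2 + 4\beta P_i^4 + 6\gamma P_i^6)$, and since $\gamma>0$ the sextic term dominates, so $(\mathbf{DF}(\vecP_N),\vecP_N) \ge -C|\Omega|$ for a constant depending only on $\alpha,\beta,\gamma$ (this is the same computation behind \eqref{i:lb}).

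For part (a) I would take $\vecQ = \vecP_N$ in \eqref{e:gf-l2-n}. The time-derivative term gives $\tfrac{\rho_v}{2}\tfrac{d}{dt}\|\vecP_N\|^2$, the gradient term gives $g\|\GRAD\vecP_N\|^2$, the nonlinear term is bounded below by $-C|\Omega|$ as above, and the coupling term is estimated by $|(\GRAD V_{\vecP_N},\vecP_N)| \le \|\GRAD V_{\vecP_N}\|\,\|\vecP_N\| \le C(\|\rho\|+\|\GRAD\vecP_N\|)\|\vecP_N\|$; a Young's inequality absorbs the $\|\GRAD\vecP_N\|$ factor into $\tfrac{g}{2}\|\GRAD\vecP_N\|^2$ and leaves $C(\|\rho\|^2+\|\vecP_N\|^2)$ on the right. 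Integrating in time and applying Grönwall (using $\rho\in L^\infty(0,T;L^2)$, which follows from $\rho\in H^1(0,T;L^2)\hookrightarrow C([0,T];L^2)$, and $\|\vecP_N(0)\|\le\|\vecP_0\|$ since the Galerkin projection is $L^2$-stable) yields the stated bound on $\max_t\|\vecP_N\|^2 + \int_0^T\|\GRAD\vecP_N\|^2\,dt$; the bound on $\int_0^T\|\GRAD V_{\vecP_N}\|^2\,dt$ then follows from the a priori estimate for $\Phi$.

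For part (b) I would instead test against $\vecQ = \partial_t\vecP_N$, which is admissible since $\partial_t\vecP_N(t)\in[X_N]^d$. The point is that $g(\GRAD\vecP_N,\GRAD\partial_t\vecP_N) + (\mathbf{DF}(\vecP_N),\partial_t\vecP_N) + (\GRAD V_{\vecP_N},\partial_t\vecP_N) = \tfrac{d}{dt}I(\vecP_N)$ by Proposition~\ref{p:gf-equivalent} (the gradient-flow identity $\delta_{\vecP}I(\vecP_N)[\partial_t\vecP_N] = \tfrac{d}{dt}I(\vecP_N)$), so the equation becomes $\rho_v\|\partial_t\vecP_N\|^2 + \tfrac{d}{dt}I(\vecP_N) = 0$ — except that the term coming from $\partial_t\rho$ inside $V_{\vecP_N}$ must be tracked carefully: differentiating $\tfrac\epsilon2\|\GRAD V_{\vecP_N}\|^2$ in time produces both a $\vecP_N$-derivative part and a $\rho_t$ part, and it is the latter, $-(V_{\vecP_N},\rho_t)$ or similar, that one moves to the right-hand side and bounds by $\tfrac12\|\rho_t\|^2 + \tfrac12\|V_{\vecP_N}\|^2$ via Young. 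Integrating from $0$ to $T$ gives exactly the stated inequality. The $H^2$ elliptic regularity hypothesis is not actually needed for part (b) as stated, but is flagged because the companion bound $\|V_{\vecP_N}\|\le C\|\rho-\DIV\vecP_N\|$ would require it — in fact $\|V_{\vecP_N}\|_{L^2}$ is already controlled by $\|\GRAD V_{\vecP_N}\|$ via Poincaré, so combined with part (a) this shows $\int_0^T\|\partial_t\vecP_N\|^2\,dt\le C$, i.e.\ $\vecP_N\in H^1(0,T;L^2)$.

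For part (c), since $\vecP_N$ need not lie in $[H^2]^d$ a priori I would argue via the weak form: for fixed $t$, $-g\Delta\vecP_N = \rho_v\partial_t\vecP_N + \mathbf{DF}(\vecP_N) + \GRAD V_{\vecP_N}$ holds in $[X_N]^d$ in the sense that testing the right-hand side against $\vecQ\in[X_N]^d$ equals $g(\GRAD\vecP_N,\GRAD\vecQ)$; one then estimates $\|\Delta\vecP_N\|$ (interpreting $\Delta$ on the finite-dimensional space, consistently with the eigenfunction expansion) by the $L^2$ norm of the right-hand side, giving $g\|\Delta\vecP_N\| \le C(\|\partial_t\vecP_N\| + \|\mathbf{DF}(\vecP_N)\| + \|\GRAD V_{\vecP_N}\|)$. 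The one genuinely delicate point — and the main obstacle — is controlling $\|\mathbf{DF}(\vecP_N)\| = \|(2\alpha P_i + 4\beta P_i^3 + 6\gamma P_i^5)_i\|$, which involves $\|P_i^5\|_{L^2} = \|P_i\|_{L^{10}}^5$; in $d=3$, $H^1\hookrightarrow L^6$ only, so one needs either the $L^\infty$ bound on $\vecP_N$ (propagated from $\vecP_0\in L^\infty$ by a maximum-principle / Stampacchia truncation argument on the Galerkin system, or more honestly postponed) or an interpolation estimate $\|P_i\|_{L^{10}}\lesssim\|P_i\|_{H^2}^\theta\|P_i\|_{L^6}^{1-\theta}$ that is then absorbed into $\|\Delta\vecP_N\|$ by Young after integrating in $t$ and using parts (a)–(b). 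Squaring and integrating in time, then invoking parts (a) and (b) to bound $\int_0^T(\|\GRAD\vecP_N\|^2+\|\partial_t\vecP_N\|^2+\|\rho\|^2)\,dt$, yields the claimed bound and hence $\vecP_N\in L^2(0,T;[H^2]^d)$ uniformly in $N$; here the $H^2$ elliptic regularity of $\Phi$ is what lets one also say $\|\GRAD V_{\vecP_N}\|$ (indeed $\|V_{\vecP_N}\|_{H^2}$) is controlled, closing the argument.
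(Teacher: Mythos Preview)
Your arguments for parts (a) and (b) are essentially correct and close in spirit to the paper's. One cosmetic difference in (a): rather than bounding $|(\GRAD V_{\vecP_N},\vecP_N)|$ by Cauchy--Schwarz and the a~priori estimate for $\Phi$, the paper uses the Poisson equation \eqref{e:poisson} itself to rewrite $(\GRAD V_{\vecP_N},\vecP_N)=\epsilon\|\GRAD V_{\vecP_N}\|^2-(\rho,V_{\vecP_N})$, which places $\epsilon\|\GRAD V_{\vecP_N}\|^2$ on the left with a good sign and leaves only $(\rho,V_{\vecP_N})$ to estimate. Either route closes via Gr\"onwall. Your treatment of (b) is also right, and your remark that the $H^2$ regularity is not logically forced by the inequality as stated is fair; the paper invokes it to write the strong Poisson equation before differentiating in time, but one can equally compute $\tfrac{d}{dt}\tfrac{\epsilon}{2}\|\GRAD V_{\vecP_N}\|^2$ directly from the weak formulation.

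Part (c), however, has a genuine gap. By rearranging and taking the $L^2$ norm of the right-hand side you are forced to control $\|\mathbf{DF}(\vecP_N)\|$, i.e.\ $\|P_{N,i}^5\|_{L^2}=\|P_{N,i}\|_{L^{10}}^5$. Neither of your proposed fixes works cleanly: Stampacchia truncation is unavailable on the Galerkin level because truncations of functions in $X_N$ do not stay in $X_N$, so there is no maximum principle to propagate the $L^\infty$ bound on $\vecP_0$; and the Gagliardo--Nirenberg route gives, in $d=3$, $\|P\|_{L^{10}}^5\le C\|P\|_{H^2}\|P\|_{H^1}^4$, which after Young produces a term $C\|\vecP_N\|_{H^1}^4\|\Delta\vecP_N\|^2$ that can only be absorbed under an unjustified smallness assumption on $\|\vecP_N\|_{H^1}$.

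The paper avoids this entirely by a structural trick you are missing. Testing with $\vecQ=-\Delta\vecP_N\in[X_N]^d$ (legitimate because $X_N$ is spanned by Laplace eigenfunctions), one does \emph{not} bound the nonlinear term by Cauchy--Schwarz but instead integrates it by parts:
\[
-(DF(P_{N,i})P_{N,i},\Delta P_{N,i})=\int_\Omega\bigl(2\alpha+12\beta P_{N,i}^2+30\gamma P_{N,i}^4\bigr)|\GRAD P_{N,i}|^2\,\diff x\ge -c_2\|\GRAD P_{N,i}\|^2,
\]
using that $30\gamma s^2+12\beta s+2\alpha$ is bounded below since $\gamma>0$. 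This converts the quintic nonlinearity into a term controlled by $\|\GRAD\vecP_N\|^2$ with no $L^{10}$ norm in sight. The remaining terms $\rho_v(\partial_t\vecP_N,\Delta\vecP_N)$ and $(\GRAD V_{\vecP_N},\Delta\vecP_N)$ are handled by Young and absorbed into $g\|\Delta\vecP_N\|^2$, yielding exactly the stated bound. This integration-by-parts sign argument is the key idea for (c).
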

\begin{proof}
  \boxed{1}  To prove (a), we let $\vecQ = \vecP_N$ in \eqref{e:gf-l2-n} to get that for any $\mu>0$,
  \[
    \begin{aligned}
      \frac{\rho_v}2\diff_t\|\vecP_N(t)\|^2 & + g\|\GRAD\vecP_N(t)\|^2  +\epsilon \|\GRAD V_{\vecP_N}(t)\|^2                                  \\
                                            & = (\rho,V_{\vecP_N}) + c_1\|\vecP_N(t)\|^2                                                      \\
                                            & \le \frac{\mu C_p^2}{2}\|\GRAD V_{\vecP_N}\|^2+\frac{1}{2\mu}\|\rho\|^2 + c_1\|\vecP_N(t)\|^2 ,
    \end{aligned}
  \]
  where we used the facts that $DF(P_i) = 2\alpha+4\beta P_i^2 + 6\gamma P_i^4\ge -c_1$ for some $c_1\ge 0$ and $(\GRAD V_{\vecP_N},\vecP_N)= \epsilon\|\GRAD V_{\vecP_N}\|^2-(\rho,V_{\vecP_N})$ according to \eqref{e:poisson}. We also note that for the last inequality above, we applied the Schwarz inequality as well as the Poincar\'e inequality with $C_p$ denoting the Poincar\'e constant. Now we set $\mu =\epsilon/C_p^2$ and integrate the above inequality over $[0,T]$ to obtain that
  \[
    \begin{aligned}
      \|\vecP_N(T)\|^2 & \le \|\vecP_N(T)\|^2+ \frac{2}{\rho_v}\int_0^T g\|\GRAD \vecP_N(t)\|^2 +
      \frac\epsilon2\|\GRAD V_{\vecP_N}\|^2\diff t                                                                                       \\
                       & \le \|\vecP(0)\|^2+\frac{2}{\rho_v}\int_0^T c_1\|\vecP_N(t)\|^2 + \frac{C_p^2}{2\epsilon}\|\rho(t)\|^2\diff t .
    \end{aligned}
  \]
  Gr\"{o}wall's inequality implies that $\max_{t\in[0,T]}\|\vecP_N(t)\|^2\le C$. Applying this estimate to the second inequality above implies that
  \[
    \int_0^T \|\GRAD \vecP_N(t)\|^2 + \|\GRAD V_{\vecP_N}(t)\|^2 \diff t \le C.
  \]
  Then,
  \begin{equation}\label{i:poincare}
    \int_0^T \|\vecP_N(t)\|^2 \diff t \le C\int_0^T \|\GRAD \vecP_N(t)\|^2\diff t \le C
  \end{equation}
  due to the Poincar\'e inequality.

  \boxed{2} Before showing the estimate (b), we note that $V_{\vecP_N}\in H^2(\Omega)$ due to the elliptic regularity assumption and hence $V_{\vecP_N}$ satisfies the equation
  \[
    -\epsilon\Delta V_{\vecP_N} = \rho-\DIV\vecP_N
  \]
  in the $L^2(\Omega)$ sense. Differentiate the above equation with respect to $t$ and multiply by a test function $w\in\Hunz_D$ to obtain
  \begin{equation}\label{e:poisson-t}
    \epsilon(\partial_t(\GRAD V_{\vecP_N}),\GRAD w) = (\partial_t\vecP_N,\GRAD w) + (\partial_t\rho, w).
  \end{equation}
  Now we choose $\vecQ = \partial_t \vecP_N\in [\Hunz]^d$ in \eqref{e:gf-l2-n} and apply \eqref{e:poisson-t} with $w=V_{\vecP_N}\in \Hunz_D$ to write
  \[
    \begin{aligned}
      \rho_v\|\partial_t \vecP_N(t)\|^2 & + \frac g2 \diff_t\|\GRAD \vecP_N(t)\|^2+\diff_t\int_\Omega F(\vecP_N(t))\diff x + \frac\epsilon 2\diff_t\|\GRAD V_{\vecP_N}(t)\|^2 \\
                                        & =\rho_v\|\partial_t \vecP_N(t)\|^2 + \diff_t I(\vecP_N(t))= (\partial_t\rho,V_{\vecP_N})
      \le \frac12\|\partial_t \rho\|^2+\frac12\|V_{\vecP_N}\|^2.
    \end{aligned}
  \]
  The assertion follows by integrating of the above inequality over $[0,T]$ and by the fact that $I(\vecP_0)$ is finite.

  \boxed{3} In terms of the estimate (c), we set $\vecQ = -\Delta \vecP_N\in [X_N]^d$ in \eqref{e:gf-l2-n}. Note that for $i=1,\ldots,d$, integration by parts yields that
  \[
    -(DF(P_i)P_i, \Delta P_i) = \int_\Omega(2\alpha+12\beta P_i^2 + 30\gamma P_i^4)|\GRAD P_i|^2\diff x \ge -c_2 \|\GRAD P_i\|^2,
  \]
  with some $c_2\ge 0$.
  Applying this estimate in \eqref{e:gf-l2-n} to get
  \[
    \begin{aligned}
      g\|\Delta\vecP_N\|^2 & \le \rho_v(\partial_t\vecP_N,\Delta\vecP_N)+c_2\|\GRAD\vecP_N\|^2 +(\GRAD V_{\vecP_N},\Delta\vecP_N)          \\
                           & \le c_2\|\GRAD\vecP_N\|^2+ \frac{\rho_v\eta}2 \|\Delta\vecP_N\|^2+\frac{\rho_v}{2\eta}\|\partial_t\vecP_N\|^2
      +\frac{\mu}{2}\|\Delta\vecP_N\|^2 + \frac{1}{2\mu}\|\GRAD V_{\vecP_N}\|
    \end{aligned}
  \]
  with arbitrary positive $\eta$ and $\mu$. Letting $\eta$ and $\mu$ small enough so that $g-\rho_v\eta/2-\mu/2>0$ to guarantee that
  \begin{equation}\label{i:h2-estimate}
    \begin{aligned}
      \|\Delta\vecP_N\|^2 & \le C(\|\GRAD\vecP_N\|^2 + \|\partial_t\vecP_N\|^2 + \|\GRAD V_{\vecP_N}\|^2)     \\
                          & \le C(\|\GRAD\vecP_N\|^2 + \|\partial_t\vecP_N\|^2 + \|\vecP_N\|^2 + \|\rho\|^2),
    \end{aligned}
  \end{equation}
  where for the last inequality above we note that $\|\GRAD V_{\vecP_N}\| \le C(\|\vecP_N\| + \|\rho\|)$ according to \eqref{e:poisson}.
  The assertion then directly follows from the integration of the above inequality over $[0,T]$ and \eqref{i:poincare}.
\end{proof}

Now we are in a position to show the existence of weak solution to the $L^2(\Omega)$ gradient flow \eqref{e:gf-l2}.
\begin{theorem}[existence of the $L^2(\Omega)$ gradient flow]\label{t:existence}
  Suppose that the assumptions in Lemma~\ref{l:estimates} hold. Then here exists a polarization field
  \[
    \vecP \in L^2(0,T; [\Hunz\cap H^2(\Omega)]^d)\cap H^1(0,T; [L^2(\Omega)]^d).
  \]
  solves \eqref{e:gf-l2}.
\end{theorem}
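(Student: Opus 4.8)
The plan is to pass to the limit in the Galerkin scheme \eqref{e:gf-l2-n} by means of the uniform bounds collected in Lemma~\ref{l:estimates}. Write $Q_T:=\Omega\times(0,T)$. By Lemma~\ref{l:estimates}(a)--(c), the family $\{\vecP_N\}$ is bounded in
\[
	L^\infty(0,T;[L^2(\Omega)]^d)\cap L^2(0,T;[\Hunz\cap H^2(\Omega)]^d)\cap H^1(0,T;[L^2(\Omega)]^d),
\]
and $\{V_{\vecP_N}\}$ is bounded in $L^2(0,T;\Hunz_D)$. Integrating the energy identity from the proof of Lemma~\ref{l:estimates}(b) up to an arbitrary time, and using $\rho\in H^1(0,T;L^2(\Omega))$ together with Lemma~\ref{l:estimates}(a), one also gets $\sup_{t\in[0,T]}I(\vecP_N(t))\le C$; since $\tfrac\epsilon2|\GRAD V_{\vecP_N}|^2\ge 0$ and $F$ is bounded below, this forces $\sup_{t\in[0,T]}\|\GRAD\vecP_N(t)\|\le C$, whence by the Poincar\'e inequality $\{\vecP_N\}$ is bounded in $L^\infty(0,T;[\Hunz]^d)$ and hence, by $\Hunz\hookrightarrow L^6(\Omega)$ for $d\le 3$, in $[L^6(Q_T)]^d$. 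Passing to a subsequence (not relabeled), we obtain $\vecP_N\rightharpoonup\vecP$ weakly in $L^2(0,T;[H^2(\Omega)]^d)$ and in $L^2(0,T;[\Hunz]^d)$, $\partial_t\vecP_N\rightharpoonup\partial_t\vecP$ weakly in $L^2(0,T;[L^2(\Omega)]^d)$, and, since $V_{\vecP_N}=\Phi(\rho-\DIV\vecP_N)$ depends linearly and boundedly on $\vecP_N$ through \eqref{e:Phi}, $V_{\vecP_N}\rightharpoonup V_{\vecP}$ weakly in $L^2(0,T;\Hunz_D)$.

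The next step is a compactness argument. Since $\Hunz$ embeds compactly into $L^2(\Omega)$, the Aubin--Lions lemma applied to $\{\vecP_N\}\subset L^2(0,T;[\Hunz]^d)$ with $\{\partial_t\vecP_N\}\subset L^2(0,T;[L^2(\Omega)]^d)$ shows that $\{\vecP_N\}$ is relatively compact in $L^2(Q_T)$; after a further subsequence, $\vecP_N\to\vecP$ strongly in $L^2(Q_T)$ and a.e.\ in $Q_T$. Because each component of $\mathbf DF(\vecP)$ is a polynomial in $\vecP$ of degree at most five and $\{\vecP_N\}$ is bounded in $[L^6(Q_T)]^d$, the family $\{\mathbf DF(\vecP_N)\}$ is bounded in $[L^{6/5}(Q_T)]^d$; combined with the a.e.\ convergence and the reflexivity of $L^{6/5}(Q_T)$, this yields $\mathbf DF(\vecP_N)\rightharpoonup\mathbf DF(\vecP)$ weakly in $[L^{6/5}(Q_T)]^d$.

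Now I would fix an integer $M$, a test function $\vecQ\in[X_M]^d$, and $\psi\in C_0^\infty((0,T))$. For $N\ge M$, multiplying \eqref{e:gf-l2-n} by $\psi$ and integrating over $(0,T)$, each term passes to the limit: the terms $\rho_v\langle\partial_t\vecP_N,\vecQ\rangle$, $g(\GRAD\vecP_N,\GRAD\vecQ)$ and $(\GRAD V_{\vecP_N},\vecQ)$ are linear and converge by the weak convergences above, while $(\mathbf DF(\vecP_N),\vecQ)$ converges because $\mathbf DF(\vecP_N)\rightharpoonup\mathbf DF(\vecP)$ in $[L^{6/5}(Q_T)]^d$ is paired against $\psi\vecQ\in[L^6(Q_T)]^d$ (recall $X_M\subset\Hunz\hookrightarrow L^6(\Omega)$ for $d\le 3$). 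As $\psi$ is arbitrary, the weak equation in \eqref{e:gf-l2} holds for a.e.\ $t\in(0,T)$ and every $\vecQ\in[X_M]^d$; letting $M\to\infty$ and invoking the density of $\bigcup_M[X_M]^d$ in $[\Hunz]^d$ extends it to all $\vecQ\in[\Hunz]^d$. The regularity $\vecP\in L^2(0,T;[\Hunz\cap H^2(\Omega)]^d)\cap H^1(0,T;[L^2(\Omega)]^d)$ is inherited from the uniform bounds by weak lower semicontinuity of the norms. Finally, the initial condition $\vecP(0)=\vecP_0$ (meaningful since $\vecP\in H^1(0,T;[L^2(\Omega)]^d)\hookrightarrow C([0,T];[L^2(\Omega)]^d)$) is recovered in the standard way: test against $\psi\in C^1([0,T])$ with $\psi(T)=0$, $\psi(0)=1$, integrate the time-derivative term by parts before passing to the limit, compare with the limit equation integrated by parts the same way, and use that $\vecP_N(0)$ is the $[L^2(\Omega)]^d$-orthogonal projection of $\vecP_0$ onto $[X_N]^d$, so that $\vecP_N(0)\to\vecP_0$ in $[L^2(\Omega)]^d$.

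I expect the crux to be the passage to the limit in the non-monotone quintic term $\mathbf DF(\vecP_N)$: weak convergence of $\vecP_N$ alone is of no help there, so the whole argument rests on the Aubin--Lions compactness --- which in turn relies on the $H^1(0,T;[L^2(\Omega)]^d)$ bound of Lemma~\ref{l:estimates}(b), i.e.\ ultimately on the $H^2$ elliptic regularity of $\Phi$ --- together with the $[L^6(Q_T)]^d$ bound that makes $\{|\vecP_N|^5\}$ equi-integrable over $Q_T$. The remaining ingredients (linear weak limits, lower semicontinuity, density, and recovery of the initial datum) are routine.
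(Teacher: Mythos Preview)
Your proof is correct and follows the same Galerkin--compactness strategy as the paper: uniform bounds from Lemma~\ref{l:estimates}, weak subsequential limits, Aubin--Lions for a.e.\ convergence, identification of the nonlinear limit, and recovery of the initial datum.

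The only substantive tactical differences are in the treatment of the nonlinearity and of $V_{\vecP_N}$. You control $\mathbf DF(\vecP_N)$ in $[L^{6/5}(Q_T)]^d$ via the $L^\infty(0,T;[\Hunz]^d)\hookrightarrow[L^6(Q_T)]^d$ bound and then identify its weak $L^{6/5}$ limit using a.e.\ convergence; the paper instead uses the stronger embedding $L^2(0,T;[H^2(\Omega)]^d)\hookrightarrow L^2(0,T;[L^\infty(\Omega)]^d)$ to place $\mathbf DF(\vecP_N)$ in $L^2(0,T;[L^2(\Omega)]^d)$ and then invokes the generalised dominated convergence theorem. Your route is slightly more elementary in that it does not need the $H^2$ estimate for this particular step, at the price of pairing against $L^6$ test functions only (which is harmless by density). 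For the potential, the paper observes the \emph{strong} convergence $\GRAD V_{\vecP_N}\to\GRAD V_{\vecP}$ in $L^2(0,T;L^2(\Omega))$ from the stability bound $\|\GRAD(V_{\vecP_N}-V_{\vecP})\|\le\epsilon^{-1}\|\vecP_N-\vecP\|$ combined with the strong $L^2(Q_T)$ convergence of $\vecP_N$; your weak-limit identification via linearity of $\Phi$ is also fine. Finally, the paper recovers $\vecP(0)=\vecP_0$ through the compact embedding into $C([0,T];[L^2(\Omega)]^d)$ rather than the integration-by-parts trick you use; both are standard.
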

\begin{proof}
  We summarize the results in Lemma~\ref{l:estimates} to get
  \[
    \|\vecP_N(t)\|_{L^\infty(0,T;L^2(\Omega))} + \|\vecP_N\|_{L^2(0,T;\Hunz\cap H^2(\Omega))} + \|\vecP_N\|_{H^1(0,T;L^2(\Omega))} \le C,
  \]
  where the constant $C$ is independent of $N$. By Sobolev embedding, we also have $\|\vecP_N\|_{L^2(0,T;L^\infty(\Omega))}\le C$. This implies that
  \[
    \|\mathbf DF(\vecP_N)\|_{L^2(0,T;L^2(\Omega))}\le C
  \]
  with the constant $C$ independent of $N$. Therefore, there exists a vector-valued function $\bm \xi\in L^2(0,T;[L^2(\Omega)]^d)$, a vector-valued function $\vecP\in L^2(0,T;[\Hunz\cap H^2(\Omega)]^d)\cap H^1(0,T;[L^2(\Omega)]^d) $ and a subsequence of $\{\vecP_N\}$, which is denoted by $\{\vecP_N\}$, satisfying
  \[
    \begin{aligned}
      \vecP_N \to \vecP\quad                     & \text{ weakly in } L^2(0,T;[\Hunz\cap H^2(\Omega)]^d), \\
      \partial_t\vecP_N \to \partial_t\vecP\quad & \text{ weakly in } L^2(0,T;[L^2(\Omega)]^d),           \\
      \vecP_N \to  \vecP\quad                    & \text{ weak-* in } L^\infty(0,T;[L^2(\Omega)]^d),      \\\
      \mathbf DF(\vecP_N) \to \bm\xi\quad        & \text{ weakly in } L^2(0,T;[L^2(\Omega)]^d) .
    \end{aligned}
  \]
  According to the compact embedding
  \[
    L^2(0,T;\Hunz)\cap H^1(0,T;\Hunz')\to L^2(0,T;L^2(\Omega)),
  \]
  we can extract a subsequence from $\{\vecP_N\}$ (denoted again by $\{\vecP_N\}$) so that $\vecP_N$ converges to $\vecP$ a.e. in $\Omega\times[0,T]$. By the generalized dominated convergence theorem,
  \[
    \int_0^T(\mathbf DF(\vecP_N),\vecQ)\diff t \to \int_0^T (\mathbf  DF(\vecP),\vecQ)\diff t,\quad\text{ as } N\to \infty ,
  \]
  which implies that $\bm\xi = \mathbf DF(\vecP)$. We also note that $\GRAD V_{\vecP_N}$ converges to $\GRAD\Vp$ strongly in $L^2(0,T;L^2(\Omega))$ in view of \eqref{e:poisson}. So
  \[
    \|\GRAD(V_{\vecP_N}-\Vp)\| \le \frac1\epsilon \|\vecP_N-\vecP\| .
  \]
  Due to the compact embedding
  \[
    L^2(0,T;\Hunz\cap H^2(\Omega))\cap H^1(0,T;L^2(\Omega)) \to C([0,T];L^2(\Omega)),
  \]
  $\vecP_N(0)$ converges to $\vecP_0$ in $L^2(\Omega)$. Now we apply all the limits above in \eqref{e:gf-l2-n} to conclude that $\vecP$ satisfies equation \eqref{e:gf-l2} and that $\vecP\in L^2(0,T;\Hunz\cap H^2(\Omega))\cap H^1(0,T;L^2(\Omega))$.
\end{proof}

\subsection{Uniqueness}
To show the uniqueness of the $L^2(\Omega)$ gradient flow, we further assume that $\|\Delta\vecP_0\|\le\infty$. This leads to the following regularity property for the solution $\vecP$.
\begin{lemma}\label{l:max}
  Under the assumptions in Lemma~\ref{l:estimates} and $\|\Delta\vecP_0\|\le\infty$,
  we have that if $\vecP$ is a weak solution to \eqref{e:gf-l2}, then
  \[
    \max_{t\in[0,T]} \bigg(\|\partial_t\vecP\| + \|\vecP\|_{[H^2(\Omega)]^d}\bigg) \le C .
  \]
  Hence, $\vecP\in L^\infty(\Omega\times[0,T])$ by the Sobolev embedding.
\end{lemma}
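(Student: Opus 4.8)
The plan is to differentiate the Galerkin equation \eqref{e:gf-l2-n} in time, test with $\partial_t\vecP_N$, and close the resulting differential inequality for $\|\partial_t\vecP_N(t)\|^2$ by Gr\"onwall's inequality; a uniform $[H^2(\Omega)]^d$ bound on $\vecP_N(t)$ then follows, pointwise in $t$, from the elliptic estimate \eqref{i:h2-estimate} already established in the proof of Lemma~\ref{l:estimates}(c). As throughout Section~\ref{s:well-posedness}, I would carry out all the manipulations on the finite-dimensional approximations $\vecP_N$: differentiating \eqref{e:gf-l2-n} in $t$ is legitimate there because $\vecP_N$ solves an ODE system whose right-hand side is polynomial in the coefficients and $H^1$ in $t$ through $\rho\in H^1(0,T;L^2(\Omega))$, so that $\vecP_N\in H^2(0,T;[X_N]^d)$. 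The resulting bounds being independent of $N$, they transfer to the weak solution $\vecP$ of Theorem~\ref{t:existence} by weak-$*$ lower semicontinuity, together with the convergences recorded in that proof.

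Differentiating \eqref{e:gf-l2-n} with respect to $t$ and taking $\vecQ=\partial_t\vecP_N$ gives
\[
	\frac{\rho_v}{2}\diff_t\|\partial_t\vecP_N\|^2 + g\|\GRAD\partial_t\vecP_N\|^2 + (D\mathbf{DF}(\vecP_N)\,\partial_t\vecP_N,\partial_t\vecP_N) + (\GRAD\partial_t V_{\vecP_N},\partial_t\vecP_N) = 0 .
\]
Here $D\mathbf{DF}(\vecP_N)$ is the diagonal matrix with entries $2\alpha+12\beta P_{N,i}^2+30\gamma P_{N,i}^4$, precisely the coefficient already controlled in the proof of Lemma~\ref{l:estimates}(c); under the \ac{FE} sign conditions it is bounded below, so the third term is $\ge-c\|\partial_t\vecP_N\|^2$. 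For the coupling term, differentiating \eqref{e:poisson} in $t$ and testing with $w=\partial_t V_{\vecP_N}$ yields $\|\GRAD\partial_t V_{\vecP_N}\|\le C(\|\partial_t\vecP_N\|+\|\rho_t\|)$, so by Cauchy--Schwarz and Young the fourth term is absorbed into $C\|\partial_t\vecP_N\|^2+C\|\rho_t\|^2$. Dropping the nonnegative $g$-term leaves $\diff_t\|\partial_t\vecP_N\|^2\le C\|\partial_t\vecP_N\|^2+C\|\rho_t\|^2$, whence Gr\"onwall gives $\max_{t\in[0,T]}\|\partial_t\vecP_N(t)\|^2\le e^{CT}\big(\|\partial_t\vecP_N(0)\|^2+C\|\rho_t\|_{L^2(0,T;L^2(\Omega))}^2\big)$.

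It then remains to bound $\|\partial_t\vecP_N(0)\|$ uniformly in $N$, which is where the hypothesis $\|\Delta\vecP_0\|<\infty$ enters. Taking $t=0$ and $\vecQ=\partial_t\vecP_N(0)$ in \eqref{e:gf-l2-n} and integrating by parts in the $g$-term gives
\[
	\rho_v\|\partial_t\vecP_N(0)\|^2 = g(\Delta\vecP_N(0),\partial_t\vecP_N(0)) - (\mathbf{DF}(\vecP_N(0)),\partial_t\vecP_N(0)) - (\GRAD V_{\vecP_N}(0),\partial_t\vecP_N(0)) .
\]
Because $\vecP_N(0)$ is the $L^2(\Omega)$-projection of $\vecP_0$ onto $[X_N]^d$ and $\Delta$ commutes with this projection on $[H^2(\Omega)\cap\Hunz]^d$ --- to which $\vecP_0$ belongs by the assumption $\|\Delta\vecP_0\|<\infty$ and the $H^2(\Omega)$ elliptic regularity --- we get $\|\Delta\vecP_N(0)\|\le\|\Delta\vecP_0\|$; the same regularity estimate gives $\|\vecP_N(0)\|_{H^2(\Omega)}\le C$, and the embedding $H^2(\Omega)\hookrightarrow L^\infty(\Omega)$ for $d\le3$ bounds the degree-five polynomial $\mathbf{DF}(\vecP_N(0))$ in $[L^2(\Omega)]^d$. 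Finally $\|\GRAD V_{\vecP_N}(0)\|\le C(\|\vecP_N(0)\|+\|\rho(0)\|)$ with $\rho(0)$ meaningful since $\rho\in H^1(0,T;L^2(\Omega))\hookrightarrow C([0,T];L^2(\Omega))$. Cauchy--Schwarz now yields $\|\partial_t\vecP_N(0)\|\le C$, hence $\max_{t\in[0,T]}\|\partial_t\vecP_N(t)\|\le C$ uniformly in $N$.

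To conclude, I would run the energy identity from the proof of Lemma~\ref{l:estimates}(b) on $[0,t]$ instead of $[0,T]$ to obtain $I(\vecP_N(t))\le C$ for all $t$, hence $\|\GRAD\vecP_N(t)\|$ and, by Poincar\'e, $\|\vecP_N(t)\|$ bounded uniformly in $t$ and $N$; inserting these together with the bound for $\|\partial_t\vecP_N(t)\|$ and $\|\rho(t)\|\le C$ into \eqref{i:h2-estimate} gives $\sup_{t\in[0,T]}\|\Delta\vecP_N(t)\|\le C$, which the $H^2(\Omega)$ elliptic regularity upgrades to $\sup_{t\in[0,T]}\|\vecP_N(t)\|_{H^2(\Omega)}\le C$. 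Passing to the limit --- weak-$*$ in $L^\infty(0,T;[L^2(\Omega)]^d)$ for $\partial_t\vecP_N$ and in $L^\infty(0,T;[H^2(\Omega)]^d)$ for $\vecP_N$ --- gives the stated bound for $\vecP$, and $H^2(\Omega)\hookrightarrow L^\infty(\Omega)$ for $d\le3$ gives $\vecP\in L^\infty(\Omega\times[0,T])$. I expect the main obstacle to be the initial slice: the nonlinear term at $t=0$ forces $L^\infty$ control of $\vecP_N(0)$, which is not available from $L^2$-stability of the projection and must be recovered from a uniform $H^2$ bound on $\vecP_N(0)$ --- the one place where $\|\Delta\vecP_0\|<\infty$ is genuinely used --- whereas everything else is a routine Gr\"onwall/energy computation.
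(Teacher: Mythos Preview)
Your proof is correct and follows the same overall strategy as the paper: differentiate the equation in time, test with $\partial_t\vecP$, exploit the pointwise lower bound on $D^2F$ to control the nonlinear term, close via Gr\"onwall after bounding the initial velocity through $\|\Delta\vecP_0\|$, and then upgrade to $L^\infty(0,T;[H^2(\Omega)]^d)$ via the elliptic estimate \eqref{i:h2-estimate}.

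The execution differs in two respects. First, the paper works directly on the weak solution using the strong form \eqref{e:strong}, whereas you carry out all manipulations at the Galerkin level $\vecP_N$ and pass to the limit by weak-$*$ lower semicontinuity; your choice is arguably the more careful one, since it sidesteps the question of whether $\partial_{tt}\vecP$ exists before the lemma is proved, and your observation that $\Delta$ commutes with the spectral projection is exactly what is needed to make the $t=0$ bound uniform in $N$. Second, for the intermediate $L^\infty(0,T;[\Hunz]^d)$ control the paper runs a separate Gr\"onwall argument (its Step~2, testing with $-\Delta\vecP$ and integrating by parts in the time-derivative term), while you simply read it off the energy inequality of Lemma~\ref{l:estimates}(b) applied on $[0,t]$. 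Both routes reach the same conclusion with comparable effort; yours reuses more of the machinery already in place.
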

\begin{proof}
  \boxed{1} We first note that $\Vp$ and $\vecP$ satisfies the following system in the $L^2(\Omega)$ sense:
  \begin{equation}\label{e:strong}
    \begin{aligned}
      \rho_v\partial_t\vecP -g\Delta \vecP + \mathbf DF(\vecP) + \GRAD\Vp & = 0,                                  \\
      \quad\text{ with } \DIV(-\epsilon\GRAD \Vp + \vecP)                 & = \rho \text{ and } \vecP(0)=\vecP_0.
    \end{aligned}
  \end{equation}
  Similar to \eqref{e:poisson-t}, we differentiate the above equations with respect to $t$ and multiply with test functions to get
  \[
    \begin{aligned}
       & \epsilon(\GRAD(\partial_t \Vp),\GRAD w) = (\partial_t\vecP,\GRAD w) + (\partial_t\rho, w), & \quad\forall w\in \Hunz_D,             \\
       & \rho_v(\partial_{tt}\vecP,\GRAD\vecQ) + g(\GRAD(\partial_t\vecP),\GRAD\vecQ)                                                        \\
       & \quad\quad\quad + (\partial_t \mathbf DF(\vecP),Q) + (\GRAD(\partial_t\Vp),\vecQ) = 0,     & \quad\quad\forall \vecQ\in [\Hunz]^d .
    \end{aligned}
  \]
  Letting $w=\partial_t\Vp$ and $\vecQ = \partial_t\vecP$, we obtain that for any $\mu>0$,
  \[
    \begin{aligned}
      \frac{\rho_v}2\diff_t & \|\partial_t\vecP\|^2 + g\|\GRAD(\partial_t \vecP)\|^2
      + \epsilon\|\GRAD(\partial_t\Vp)\|^2                                                                                                            \\
                            & \le c_2 \|\partial_t\vecP\|^2 + (\partial_t\rho, \partial_t\Vp)                                                         \\
                            & \le c_2 \|\partial_t\vecP\|^2 + \frac{1}{2\mu} \|\partial_t \rho\|^2 + \frac{\mu C_p^2}{2} \|\GRAD(\partial_t\Vp)\|^2 , \\
    \end{aligned}
  \]
  recalling from Step~1 in the proof of Lemma~\ref{l:estimates} that $C_p$ is the Poincar\'e constant. We set $\mu =\epsilon/C_p^2$ to yield
  \begin{equation}\label{i:h22-estimate}
    \frac{\rho_v}2\diff_t  \|\partial_t\vecP\|^2 + g\|\GRAD(\partial_t \vecP)\|^2
    + \frac{\epsilon}{2}\|\GRAD(\partial_t\Vp)\| \le c_2 \|\partial_t\vecP\|^2 + \frac{C_p^2}{2\epsilon} \|\partial_t \rho\|^2 .
  \end{equation}
  Since $\vecP\in H^1(0,T;[L^2(\Omega)]^d)$, $\rho\in H^1(0,T; L^2(\Omega))$ and
  \[
  \begin{aligned}
    \lim_{t\to0}\|\partial_t\vecP_0\| = \tfrac{1}{\rho_v}\|&g\Delta\vecP_0-\mathbf DF(\vecP_0)-\GRAD V_{\vecP_0}\| \\
    &\le C(\|\Delta\vecP_0\| + \|\mathbf DF(\vecP_0)\| + \|\vecP_0\| + \|\rho(t=0)\|) \le C,
   \end{aligned}
  \]
  we integrate \eqref{i:h22-estimate} over $[0,T]$ to derive that $\partial_t\vecP\in L^\infty(0,T; [L^2(\Omega)]^d)$.

  \boxed{2} We multiply the term $-\Delta\vecP$ on both sides of the first equation in \eqref{e:strong} and apply integration by parts for the term $-(\partial_t\vecP,\Delta\vecP)$ to write
  \[
    \frac{\rho_v}2\diff_t \|\GRAD\vecP\|^2
    +g\|\Delta\vecP\|^2 \le c_2\|\GRAD\vecP\|^2 +(\GRAD V_{\vecP},\Delta\vecP)
  \]
  Following the argument for \eqref{i:h2-estimate}, we have
  \[
    \frac{\rho_v}2\diff_t \|\GRAD\vecP\|^2
    +g\|\Delta\vecP\|^2 \le C(\|\GRAD\vecP\|^2 + \|\rho\|^2)
  \]
  Integrating the above inequality over $[0,T]$ implies that $\vecP\in L^\infty(0,T;[\Hunz]^d)$.

  \boxed{3} Based on the previous step, now we do not apply integration by parts for $-(\partial_t\vecP,\Delta\vecP)$. Whence,
  \[
    g\|\Delta\vecP\|^2 \le 	c_2\|\GRAD\vecP\|^2 +(\GRAD V_{\vecP},\Delta\vecP) +\rho_v (\partial_t P,\Delta \vecP).
  \]
  We use the Schwarz inequality for the inner-products on the right hand side above and kick back the term $\|\Delta\vecP\|^2$ to the left hand side. This leads to
  \[
    \|\Delta\vecP\|^2 \le C(\|\GRAD\vecP\|^2 + \|\partial_t P\|^2 + \|\rho\|^2) .
  \]
  Invoking $\partial_t\vecP\in L^\infty(0,T;[L^2(\Omega)]^d)$ from Step~1, $\vecP\in L^\infty(0,T;[\Hunz]^d)$ from Step~2, and $\rho \in L^\infty(0,T;L^2(\Omega))$ (due to the continuous embedding from $C^1(0,T;L^2(\Omega))$ to $H^1(0,T;L^2(\Omega))$), there holds
  \[
    \begin{aligned}
      \max_{t\in[0,T]} \|\vecP\|_{[H^2(\Omega)]^d} & \le C\max_{t\in[0,T]} \|\Delta\vecP\|                                                          \\
                                                   & \le C\max_{t\in[0,T]} \bigg(\|\GRAD\vecP\|^2 + \|\partial_t P\|^2  + \|\rho\|^2 \bigg) \le C .
    \end{aligned}
  \]
  The proof is complete.
\end{proof}

\begin{theorem}[uniqueness for the $L^2(\Omega)$ gradient flow]\label{t:uniqueness}
  Under the assumptions provided by Lemma~\ref{l:estimates} and Lemma~\ref{l:max}, solutions in \eqref{e:gf-l2} are unique.
\end{theorem}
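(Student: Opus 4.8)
The plan is to subtract the two weak formulations and run a Gr\"onwall argument on the $L^2(\Omega)$-norm of the difference, exploiting that the extra regularity furnished by Lemma~\ref{l:max} turns the quintic Landau nonlinearity into an effectively Lipschitz map and that the Poisson solver $\Phi$ is linear. Let $\vecP_1,\vecP_2$ be two weak solutions of \eqref{e:gf-l2} with the common initial datum $\vecP_0$, and set $\bm W:=\vecP_1-\vecP_2$, so that $\bm W(0)=\bm 0$ and, by Lemma~\ref{l:max}, $\bm W\in H^1(0,T;[L^2(\Omega)]^d)\cap L^2(0,T;[\Hunz\cap H^2(\Omega)]^d)$. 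Since $\Phi$ is linear, $V_{\vecP_1}-V_{\vecP_2}=\Phi(-\DIV\bm W)$, and the computation already carried out in the proof of Theorem~\ref{t:existence} gives
\[
\|\GRAD(V_{\vecP_1}-V_{\vecP_2})\|\le \tfrac1\epsilon\|\bm W\| .
\]
Taking the difference of the two copies of \eqref{e:gf-l2}, testing with $\vecQ=\bm W\in[\Hunz]^d$, and using that $t\mapsto\|\bm W(t)\|^2$ is absolutely continuous with $\rho_v\langle\partial_t\bm W,\bm W\rangle=\tfrac{\rho_v}2\diff_t\|\bm W\|^2$, yields
\[
\tfrac{\rho_v}{2}\diff_t\|\bm W\|^2 + g\|\GRAD\bm W\|^2 = -(\mathbf DF(\vecP_1)-\mathbf DF(\vecP_2),\bm W) - (\GRAD(V_{\vecP_1}-V_{\vecP_2}),\bm W).
\]

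The crucial estimate is for the nonlinear term. Componentwise $DF(P_i)P_i=2\alpha P_i+4\beta P_i^3+6\gamma P_i^5$, so $DF(P_{1,i})P_{1,i}-DF(P_{2,i})P_{2,i}$ factors as $(P_{1,i}-P_{2,i})$ times a polynomial in $P_{1,i},P_{2,i}$ of degree at most four, bounded pointwise by $C(1+|P_{1,i}|^4+|P_{2,i}|^4)$. Invoking the $L^\infty(\Omega\times[0,T])$ bound on both $\vecP_1$ and $\vecP_2$ from Lemma~\ref{l:max}, this factor is bounded by a constant $C$ depending only on $T$, $\vecP_0$ and $\rho$, hence $|(\mathbf DF(\vecP_1)-\mathbf DF(\vecP_2),\bm W)|\le C\|\bm W\|^2$. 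For the coupling term, Cauchy--Schwarz together with the bound on $\|\GRAD(V_{\vecP_1}-V_{\vecP_2})\|$ above gives $|(\GRAD(V_{\vecP_1}-V_{\vecP_2}),\bm W)|\le \tfrac1\epsilon\|\bm W\|^2$. Dropping the nonnegative term $g\|\GRAD\bm W\|^2$ leaves $\diff_t\|\bm W\|^2\le C\|\bm W\|^2$; since $\|\bm W(0)\|=0$, Gr\"onwall's inequality forces $\bm W\equiv\bm 0$, i.e.\ $\vecP_1=\vecP_2$, and then $V_{\vecP_1}=V_{\vecP_2}$ follows from \eqref{e:Phi}.

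The main obstacle is precisely the control of the quintic term $\mathbf DF$: without a pointwise-in-space bound on the two solutions one is left with a term of the form $\int_\Omega(|P_{1,i}|^4+|P_{2,i}|^4)|\bm W|^2\diff x$ which, in three dimensions, the $H^1$-energy alone (controlling only $L^6$) cannot absorb into $\|\GRAD\bm W\|^2$. This is exactly what the additional hypothesis $\|\Delta\vecP_0\|<\infty$ buys through Lemma~\ref{l:max}, which upgrades the solution to $L^\infty(0,T;[H^2(\Omega)]^d)\hookrightarrow L^\infty(\Omega\times[0,T])$. Everything else --- linearity of $\Phi$, the identification of $\langle\partial_t\bm W,\bm W\rangle$ with a time derivative, and the final Gr\"onwall step --- is routine.
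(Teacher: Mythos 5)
Your argument is correct and follows essentially the same route as the paper: subtract the two weak formulations, test with the difference, use the $L^\infty(\Omega\times[0,T])$ bound from Lemma~\ref{l:max} to make $\mathbf{DF}$ effectively Lipschitz, and conclude by Gr\"onwall. The only cosmetic difference is that the paper exploits $(\GRAD V_{\vecP_1-\vecP_2},\vecP_1-\vecP_2)=\epsilon\|\GRAD(V_{\vecP_1}-V_{\vecP_2})\|^2\ge 0$ to keep the coupling term on the left-hand side, whereas you bound it crudely by $\tfrac1\epsilon\|\bm W\|^2$ via Cauchy--Schwarz, which is equally sufficient.
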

\begin{proof}
  Suppose $\vecP_1$ and $\vecP_2$ are two solutions to \eqref{e:gf-l2}. Then their difference satisfies
  \[
    \begin{aligned}
      \rho_v(\partial_t & (\vecP_1-\vecP_2),\vecQ) + g(\GRAD(\vecP_1-\vecP_2),\GRAD\vecQ) \\
                        & + (\GRAD V_{\vecP_1-\vecP_2}, \vecQ)
      = -(\mathbf DF(\vecP_1)-\mathbf DF(\vecP_2),\vecQ) ,\quad\forall \vecQ\in [\Hunz]^d.
    \end{aligned}
  \]
  Since $\vecP_1,\vecP_2$ are bounded in $\Omega\times[0,T]$ due to Lemma~\ref{l:max} and $DF$ is locally Lipschitz, we set $\vecQ = \vecP_1-\vecP_2$ to get
  \[
    \frac{\rho_v}{2}\diff_t\|\vecP_1-\vecP_2\|^2 +g\|\GRAD(\vecP_1-\vecP_2)\|^2
    +\epsilon\|\GRAD(V_{\vecP_1}-V_{\vecP_2})\|^2\le C\|\vecP_1-\vecP_2\|^2,
  \]
  Gr\"{o}nwall's inequality together with the initial condition $\vecP_1(0)-\vecP_2(0)=0$ implies that $\vecP_1=\vecP_2$.
\end{proof}

\section{Numerics}\label{s:numerics}
In this section, we propose a numerical scheme to discretize the $L^2(\Omega)$ gradient flow \eqref{e:gf-l2}. In what follows, we assume that the charge distribution is independent in time, e.g. $\rho = q(N_D - N_A)$ where $q$ denotes the elementary electron charge and $(N_A,N_D)$ are acceptors and donors doping charge densities. 

\subsection{A semi-implicit time discretization scheme}
We shall design a stable time discretization scheme with respect to the energy form $I(\vecP)$. For a real number $\lambda$, we denote $\lambda^+=\max\{\lambda,0\}$ and $\lambda^-=\max\{-\lambda,0\}$. The idea is to decompose the term $F(\vecP)$ by a difference of two convex parts: $F(\vecP) = F^+(\vecP) - F^-(\vecP)$, where
\[
  F^\pm(\vecP) = \sum_{i=1}^d \alpha_i^\pm P_i^2 + \beta_i^\pm P_i^4 + \gamma_i^\pm P_i^6 .
\]
We similarly define $\mathbf DF^+(\vecP)$ and $\mathbf DF^-(\vecP)$. Now we set $I(\vecP)=I^+(\vecP)-I^-(\vecP)$ with
\[
  I^+(\vecP) = \int_\Omega \frac{\epsilon}{2}|\GRAD \Vp|^2+ F^+(\vecP) + \sum_{i=1}^d \frac{g_i}2 |\nabla P_i|^2\diff x,
\]
and
\[
  I^-(\vecP) = \int_\Omega F^-(\vecP)  \diff x .
\]
Analogously, we define $G^+(V,\vecP)$ and $G^-(V,\vecP)$ by replacing $\Vp$ with $V$ in $I^\pm(\vecP)$, respectively.

Given a final time $T$ and a positive integer $N$, we set the uniform time step $\tau = \tfrac TN$. The time discretization scheme reads: find a sequence $\{\vecP^n\}_{n=0}^N\subset[\Hunz]^d$ so that $\vecP^0 = \vecP_0$ and for $n\ge 1$,
\begin{equation}\label{e:time-discretization}
  \begin{aligned}
    \rho_v(\diff_{t,\tau} \vecP^{n+1}, & \vecQ)
    + g(\GRAD\vecP^{n+1},\GRAD\vecQ) + (\mathbf DF^+(\vecP^{n+1}),\vecQ)                                                            \\
                                       & + (\GRAD V^{n+1}, \vecQ) =  (\mathbf DF^-(\vecP^n),\vecQ),\quad\forall \vecQ\in[\Hunz]^d ,
  \end{aligned}
\end{equation}
with $V^n = V_{\vecP^n}:=\Phi(\rho^n - \DIV\vecP^n)$ with $\rho^n=\rho(t=n\tau)$ and the backward difference quotient $\diff_{t,\tau}\vecP^n = (\vecP^n - \vecP^{n-1})/\tau$.

\WL{
  \begin{remark}
    The above single-step time stepping scheme is expected to convergence in the first order, namely
    \[
      \|\mathbf P(t=t_n) - \mathbf P^n\| \le C\tau,
    \]
    where the constant $C$ depends on $t_n$. We refer to Section~\ref{s:simulation} for the numerical validation for the above error bound. We also note that multi-step time-stepping schemes, such as BDF-2 or 4th order Runge-Kutta scheme (see the numerical simulations in \cite{lenarczyk2016physical} for \eqref{e:gf-l2}), can be applied to discretize $\partial_t\vecP$. In order to guarantee the higher order convergence in time, we may require that $\vecP$ has higher order time regularity (cf. Theorem~10.1 of \cite{thomee2007galerkin} for the parabolic case). On the other hand, the error analysis for the first-order single-step scheme may require $\vecP\in H^1(0,T;L^2(\Omega))$ but need that $\|\vecP\|_{L^\infty(0,T;H^2(\Omega))}$ is finite (cf. Theorem~8.2 of \cite{thomee2007galerkin}). These low-order regularity conditions are guaranteed by Theorem~\ref{t:existence} and Lemma~\ref{l:max} and allow us to focus on the first order time-stepping scheme in this work. A detailed error analysis as well as the study of the high-order time discretization schemes belongs to our future work.
  \end{remark}
}
\subsection{Energy stability}
We next show that the proposed time discretization scheme is energy stable. To this end, we first rewrite \eqref{e:time-discretization} using the energy form $G(\Vp,\vecP)$. That is, for any $w\in \Hunz_D$ and $Q\in [\Hunz]^d$,
\begin{equation}\label{e:time-discretization-G}
  \left(
  \left(\begin{aligned}
         & 1 &  & 0                 \\
         & 0 &  & \frac{\rho_v}\tau \\
      \end{aligned}\right)
  \left[\left(\begin{aligned} &0\\ &\vecP^{n+1}\end{aligned}\right)
      -\left(\begin{aligned} &0\\ &\vecP^n\end{aligned}\right)\right],
  \left(\begin{aligned} &w\\ &\vecQ\end{aligned}\right)\right)
  = -D G^n\left[\begin{aligned} &w\\ &\vecQ\end{aligned}\right], \quad\text{with } \vecP^0 = \vecP_0,
\end{equation}
where,
\[
  D G^n = \left(\begin{aligned}                                                                                                                                                                                                                                                                                                                                                                                                                                                                                                                                                                                                                                                                                                                                                                                                                         & \delta_V G(V^{n},\vecP^{n})                                               \\
                                                                                                                                                                                                                                                                                                                                                                                                                                                                                                                                                                                                                                                                                                                                                                                                                                        & \delta_{\vecP}G^+(V^{n+1},\vecP^{n+1})-\delta_{\vecP}G^-(V^{n},\vecP^{n})\end{aligned}\right) .
\]
Based on the discussion from Remark~\ref{r:unique}, each term in $G^+$ and $G^-$ is convex with respect to $\vecP$. Whence,
\begin{equation}\label{i:Gc-convex}
  G^+(V^{n+1},\vecP^n) - G^+(V^{n+1}, \vecP^{n+1}) \le \delta_\vecP G^+(V^{n+1}, \vecP^{n+1})[\vecP^n - \vecP^{n+1}]
\end{equation}
and
\begin{equation}\label{i:Ge-convex}
  G^-(V^n,\vecP^{n+1}) - G^-(V^n, \vecP^{n})
  \ge \delta_{\vecP}G^-(V^n, \vecP^{n})[\vecP^{n+1} - \vecP^n] .
\end{equation}

\begin{proposition}\label{p:stable}
  The time discretization scheme \eqref{e:time-discretization} is energy stable, \ie
  \[
    I(\vecP^{n+1}) \le I(\vecP^n) .
  \]
\end{proposition}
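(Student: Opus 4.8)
The plan is the standard convex--concave (difference‑of‑convex) splitting argument: test the scheme with the increment $\vecP^{n+1}-\vecP^n$ and balance the convexity of the implicit part against the concavity of the explicit part. (Along the way the implicit iterate $\vecP^{n+1}$ is well defined, since it is the Euler--Lagrange equation of the strictly convex coercive functional $\vecQ\mapsto \tfrac{\rho_v}{2\tau}\|\vecQ-\vecP^n\|^2 + I^+(\vecQ) - \delta_\vecP I^-(\vecP^n)[\vecQ]$, which has a unique minimizer by the direct method exactly as in Section~\ref{s:min}.)

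First I would rewrite \eqref{e:time-discretization} via \eqref{e:time-discretization-G}. Since $V^n=V_{\vecP^n}$ is by construction the solution of \eqref{e:poisson} with datum $\vecP^n$, the first component $\delta_V G(V^n,\vecP^n)[w]$ of $DG^n$ vanishes for every $w\in\Hunz_D$; hence choosing $\vecQ=\vecP^{n+1}-\vecP^n$ (and any $w$) in \eqref{e:time-discretization-G} leaves
\[
	\frac{\rho_v}\tau\|\vecP^{n+1}-\vecP^n\|^2 + \delta_\vecP G^+(V^{n+1},\vecP^{n+1})[\vecP^{n+1}-\vecP^n] - \delta_\vecP G^-(V^n,\vecP^n)[\vecP^{n+1}-\vecP^n] = 0,
\]
where $\delta_\vecP G^+(V^{n+1},\vecP^{n+1})[\vecQ]=(\GRAD V^{n+1},\vecQ)+(\mathbf DF^+(\vecP^{n+1}),\vecQ)+g(\GRAD\vecP^{n+1},\GRAD\vecQ)$ and $\delta_\vecP G^-(V^n,\vecP^n)[\vecQ]=(\mathbf DF^-(\vecP^n),\vecQ)$, matching \eqref{e:time-discretization}. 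The leading term $\tfrac{\rho_v}\tau\|\vecP^{n+1}-\vecP^n\|^2$ is nonnegative and will be discarded at the very end (keeping it gives the slightly sharper dissipation inequality).

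Next I would insert the two convexity estimates. Every summand of $G^+$ and of $G^-$ is convex in $\vecP$: the gradient and electrostatic terms trivially, and $t\mapsto\alpha_i^\pm t^2+\beta_i^\pm t^4+\gamma_i^\pm t^6$ because $\alpha_i^\pm,\beta_i^\pm,\gamma_i^\pm\ge 0$. Hence \eqref{i:Gc-convex} and \eqref{i:Ge-convex} apply, and substituting them into the identity above gives
\[
	G^+(V^{n+1},\vecP^{n+1}) - G^-(V^n,\vecP^{n+1}) + \frac{\rho_v}\tau\|\vecP^{n+1}-\vecP^n\|^2 \le G^+(V^{n+1},\vecP^n) - G^-(V^n,\vecP^n).
\]
It then remains to identify the two sides. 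On the left, $V^{n+1}=V_{\vecP^{n+1}}$ gives $G^+(V^{n+1},\vecP^{n+1})=I^+(\vecP^{n+1})$, and $G^-$ does not depend on the potential, so $G^-(V^n,\vecP^{n+1})=I^-(\vecP^{n+1})$; the left side is thus $I(\vecP^{n+1})+\tfrac{\rho_v}\tau\|\vecP^{n+1}-\vecP^n\|^2$. On the right, $G^-(V^n,\vecP^n)=I^-(\vecP^n)$, and for the mismatched term $G^+(V^{n+1},\vecP^n)$ I would use that $V\mapsto G^+(V,\vecP^n)$ is concave (it differs from $V\mapsto G(V,\vecP^n)$ only by terms not involving $V$, and the latter is concave because of the quadratic $-\tfrac\epsilon2\int_\Omega|\GRAD V|^2\diff x$) with unique maximizer characterized by $\delta_V G(\cdot,\vecP^n)=0$, i.e. equal to $V^n=V_{\vecP^n}$; therefore $G^+(V^{n+1},\vecP^n)\le G^+(V^n,\vecP^n)=I^+(\vecP^n)$. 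Combining, $I(\vecP^{n+1})+\tfrac{\rho_v}\tau\|\vecP^{n+1}-\vecP^n\|^2\le I^+(\vecP^n)-I^-(\vecP^n)=I(\vecP^n)$, and dropping the nonnegative increment yields the claim.

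The only step that is not pure bookkeeping is the last one: testing with the increment forces the evaluation of $G^+$ at the mismatched pair $(V^{n+1},\vecP^n)$, and the bound $G^+(V^{n+1},\vecP^n)\le I^+(\vecP^n)$ is exactly where the variational (maximum) characterization of the Poisson solve, i.e. the concavity of $G$ in $V$, is needed; I expect that to be the main point to state carefully. One can sidestep it by arguing directly with $I^\pm$ instead: the map $\vecP\mapsto\int_\Omega\tfrac\epsilon2|\GRAD\Vp|^2\diff x$ is convex by Remark~\ref{r:unique}, so $I^+$ is convex, $\delta_\vecP I^+(\vecP^{n+1})[\vecQ]$ is precisely the implicit part of \eqref{e:time-discretization}, and testing with $\vecP^{n+1}-\vecP^n$ together with the convexity of $I^+$ and of $I^-$ gives $I(\vecP^{n+1})\le I(\vecP^n)$ without ever introducing $G^+(V^{n+1},\vecP^n)$.
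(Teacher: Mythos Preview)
Your argument is correct and follows essentially the same convex--concave splitting route as the paper: test the scheme with the increment, apply \eqref{i:Gc-convex}--\eqref{i:Ge-convex}, and handle the mismatched term $G^+(V^{n+1},\vecP^n)$ via the Poisson equation. The only cosmetic differences are that the paper tests with $(w,\vecQ)=DG^n$ (equivalent, since $\tfrac{\rho_v}{\tau}(\vecP^{n+1}-\vecP^n)$ equals minus the second component of $DG^n$) and verifies $G^+(V^{n+1},\vecP^n)\le G^+(V^n,\vecP^n)$ by an explicit completion of the square yielding $D=\tfrac{\epsilon}{2}\|\GRAD(V^n-V^{n+1})\|^2\ge 0$, whereas you phrase the same fact as concavity of $V\mapsto G^+(V,\vecP^n)$ with maximizer $V^n$; your alternative route through the convexity of $I^+$ (via Remark~\ref{r:unique} and Proposition~\ref{p:gf-equivalent}) is a slightly slicker packaging of the same idea.
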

\begin{proof}
  According to \eqref{i:Gc-convex} and adding and subtracting the term $G^+(V^{n},\vecP^{n})$, we obtain that
  \begin{equation}\label{e:diff-1}
    \begin{aligned}
      \delta_{\vecP}G^+(V^{n+1}, \vecP^{n+1})[\vecP^{n+1} - \vecP^n] & \ge G^+(V^{n+1}, \vecP^{n+1}) - G^+(V^{n+1},\vecP^n) \\
                                                                     & = G^+(V^{n+1},\vecP^{n+1}) - G^+(V^n,\vecP^n) + D .
    \end{aligned}
  \end{equation}
  Here
  \begin{equation}\label{e:D}
    \begin{aligned}
      D & = G^+(V^n,\vecP^{n}) - G^+(V^{n+1},\vecP^{n})                                                                  \\
        & = \int_\Omega -\frac\epsilon2 |\GRAD V^n|^2 +\rho V^n + \vecP^n\cdot \GRAD V^n \diff x                         \\
        & \quad\quad- \int_\Omega -\frac\epsilon2|\GRAD V^{n+1}|^2 + \rho V^{n+1}  + \vecP^n\cdot \GRAD V^{n+1}\diff x . \\
    \end{aligned}
  \end{equation}
  In view of \eqref{e:poisson}, there holds
  \[
    \int_\Omega \rho(V^n-V^{n+1}) +\vecP^n\cdot\GRAD(V^n- V^{n+1})\diff x
    = \epsilon\int_\Omega \GRAD V^n\cdot\GRAD(V^n- V^{n+1}) \diff x.
  \]
  Inserting the above formula into \eqref{e:D} yields $D\ge 0$. Hence,
  \begin{equation}\label{i:st-b-1}
    \delta_\vecP G^+(V^n, \vecP^{n+1})[\vecP^{n+1} - \vecP^n]\ge G^+(V^{n+1},\vecP^{n+1}) - G^+(V^n,\vecP^n).
  \end{equation}
  Following \eqref{i:Ge-convex} and noting that $G^-$ is independent of $V$, one can obtain that
  \begin{equation}\label{i:st-b-2}
    \begin{aligned}
      G^-(V^{n+1},\vecP^{n+1}) - G^-(V^n, \vecP^{n}) & = G^-(V^n,\vecP^{n+1}) - G^-(V^n, \vecP^{n})                   \\
                                                     & \ge \delta_{\vecP}G^-(V^n, \vecP^{n})[\vecP^{n+1} - \vecP^n] .
    \end{aligned}
  \end{equation}
  Letting $(w,\vecQ)^{\Tr} = D G^n$ in \eqref{e:time-discretization-G} and invoking \eqref{i:st-b-1} and \eqref{i:st-b-2} we conclude that
  \[
    \begin{aligned}
      0 & \ge -(DG^n,DG^n)                                                                                                   \\
        & = \frac{\rho_v}{\tau}(\delta_\vecP G^+(V^n,\vecP^{n+1})[\vecP^{n+1}-\vecP^n]
      - \delta_\vecP G^-(V^{n},\vecP^{n})[\vecP^{n+1}-\vecP^n])                                                              \\
        & \ge\frac{\rho_v}{\tau}[G^+(V^{n+1},\vecP^{n+1}) - G^+(V^n,\vecP^n)  - (G^-(V^{n+1},\vecP^{n+1})-G^-(V^n,\vecP^n))] \\
        & =  \frac{\rho_v}{\tau}[G(V^{n+1},\vecP^{n+1}) - G(V^n,\vecP^n)].
    \end{aligned}
  \]
  The proof is complete according to Proposition~\ref{p:gf-equivalent}.
\end{proof}

\subsection{An HDG space discretization}\label{s:space}
Based on the time-discretized problem \eqref{e:time-discretization}, we next proposed an HDG space discretization scheme inspired by \cite{cockburn2009unified}.

\WL{Let $\{\mathcal T_h\}_{h>0}$ be a family of conforming quasi-uniform subdivisions of $\Omega$ made of shape-regular simplices or regular quadrilaterals (cf. \cite{ciarlet2002finite})} in a sense that there exists a uniform constant $c_q>0$ in dependent of $\mathcal T_h$ such that
\[
  h:=\max_{K\in\mathcal T_h} h_K = c_q \min_{K\in\mathcal T_h} h_K ,
\]
where $h_K$ denotes the size of a cell $K\in\mathcal T_h$. Denote $\mathcal E_h$ the skeleton of $\mathcal T_h$, \ie the collection of all faces of $K\in \mathcal T_h$. The space discretization is based on the mixed formulation of \eqref{e:time-discretization} in each element $K\in\mathcal T_h$. For the Poisson problem \eqref{e:poisson} at $t_n = n\tau$, knowing that the electric field $\vecE = -\GRAD V$, we write
\begin{equation}\label{e:mixed-poisson}
  \begin{aligned}
    (\vecE^n, \vecr)_K - (V^n,\DIV \vecr)_K + (\widehat V^n, \vecr \cdot \nu)_{\partial K}                           & = 0,              \\
    -(\epsilon \vecE^n +\vecP^n,\GRAD w)_K + ((\epsilon \widehat\vecE^n + \widehat\vecP^n)\cdot \nu, w)_{\partial K} & = (\rho^n, w)_K ,
  \end{aligned}
\end{equation}
where $\widehat\cdot$ denotes the trace on $\partial K$ and $\nu$ is the outward normal vector of $K$. For equation \eqref{e:time-discretization}, setting $\mathbb W=-g\GRAD\vecP$, the local problem becomes
\begin{equation}\label{e:mixed-polarization}
  \begin{aligned}
    (g^{-1} \mathbb U^n,\mathbb W)_K - (\vecP^n,\DIV\mathbb W)_K + (\widehat \vecP^n, \mathbb W\cdot \nu)_{\partial K}                    & = 0,                                   \\
    \rho_v(\diff_{\tau,t}\vecP^n,\vecQ)_K + (\mathbf DF^+(\vecP^n)                                                     - \vecE^n,\vecQ)_K & - (\mathbb U^n, \GRAD \vecQ)_K         \\
    + (\widehat{\mathbb U}^n\cdot \nu,\vecQ)_{\partial K}                                                                                 & = (\mathbf DF^-(\vecP^{n-1}),\vecQ)_K.
  \end{aligned}
\end{equation}
We also need to \WL{weakly} impose the so-called transmission conditions to glue the local problems for adjacent elements. That is for each interior face $F$,
\begin{equation}\label{e:transmission}
  ([ (\epsilon\widehat\vecE^n +\widehat\vecP^n)\cdot \nu],\xi)_F = ([ \epsilon\widehat\vecE^n\cdot \nu],\xi)_F = 0,
  \quad \text{ and } \quad
  ([ \widehat{\mathbb U}^n\cdot \nu], \bm\eta) = 0 .
\end{equation}
Here $[\cdot]$ denotes the jump of a function (or vector field) at $F$ and $\xi,\bm\eta$ are the corresponding test function and vector field. We note that for the first equation in \eqref{e:transmission} used the fact that the $[\vecP] =0$ on $F$ according to Theorem~\ref{t:existence}.

Now we define the numerical scheme for the triplets $(V,\vecE, \widehat V)$ and $(\vecP,\mathbb U, \widehat{\vecP})$. \WL{Let $\vk{k}$ be the space of piecewise polynomials for simplicial meshes or bi-polynomials for quadrilateral meshes of degree at most $k$ subordinate to $\mathcal T_h$ and define $\hvk{k}$ to be the space of piecewise polynomials or bi-polynomials subordinate to $\mathcal E_h$.} We also denote $\hvk{k}(0)$ and $\hvkd{k}(0)$ as the collections of functions in $\hvk k$ vanishing on $\partial\Omega$ and $\Gamma_D$, respectively. Letting
\[
  (\cdot,\cdot)_{\mathcal T_h} = \sum_{K\in\mathcal T_h} (\cdot,\cdot)_K
  \quad\text{ and }\quad
  (\cdot,\cdot)_{\mathcal E_h} = \sum_{K\in \mathcal T_h} (\cdot,\cdot)_{\partial K},
\]
The \ac{HDG} discretization of \eqref{e:mixed-poisson} and \eqref{e:mixed-polarization} reads: find a sequence of functions $(V^n_h, \vecE^n_h, \widehat{V}^n_h)\in \vk k\times [\vk k]^d\times \hvkd k(0)$ and $(\vecP^n_h,\mathbb U^n_h,\widehat\vecP^n_h)\in [\vk k]^d\times[\vk k]^{d\times d}\times [\hvk k(0)]^d$ such that for $n=0$, $(\vecP^0_h,\vecQ)=(\vecP_0,\vecQ)$ holds for all $\vecQ\in [\vk k]^d$ and for $n\ge 1$,
\begin{equation}\label{e:full-system}
  \begin{aligned}
    (\vecE^n_h, \vecr)_{\mathcal T_h} - (V^n_h,\DIV \vecr)_{\mathcal T_h} + (\widehat V^n_h, \vecr \cdot \nu)_{\mathcal E_h}                             & = 0,                                                   \\
    -(\epsilon \vecE^n_h +\vecP^n_h,\GRAD w)_{\mathcal T_h} + ((\epsilon \widehat\vecE^n_h + \widehat\vecP^n_h)\cdot \nu, w)_{\mathcal E_h}              & = (\rho^n, w)_{\mathcal T_h} ,                         \\
    (g^{-1} \mathbb U^n_h,\mathbb W)_{\mathcal T_h} - (\vecP^n_h,\DIV\mathbb W)_{\mathcal T_h} + (\widehat \vecP^n_h, \mathbb W\cdot \nu)_{\mathcal E_h} & = 0,                                                   \\
    \rho_v(\diff_{\tau,t}\vecP^n_h,\vecQ)_K + (\mathbf DF^+(\vecP^n_h) - \vecE^n_h,\vecQ)_K                                                              & - (\mathbb U^n_h, \GRAD \vecQ)_K                       \\
    + (\widehat{\mathbb U}^n_h\cdot \nu,\vecQ)_{\mathcal E_h}                                                                                            & = (\mathbf DF^-(\vecP^{n-1}_h),\vecQ)_{\mathcal T_h} ,
  \end{aligned}
\end{equation}
hold for all $(\vecr,w)\in [\vk k]^d\times\vk k$ and $(\mathbb W, \vecQ)\in [\vk k]^{d\times d}\times[\vk k]^d$. Here the numerical traces for fluxes $\widehat\vecE$ and $\widehat{\mathbb U}$ are defined by
\begin{equation}\label{e:numerical-fluxes}
  \begin{aligned}
    (\epsilon\widehat \vecE^n_h+\widehat\vecP^n_h)\cdot \nu
                                     & = (\epsilon\vecE^n_h+\widehat\vecP^n_h)\cdot\nu +\tau_{V}(V^n_h-\widehat V^n_h), \\
    \widehat{\mathbb U}^n_h\cdot \nu & = \mathbb U^n_h\cdot\nu
    +\tau_{\vecP}(\vecP^n_h-\widehat\vecP^n_h) ,
  \end{aligned}
\end{equation}
where $\tau_V$ and $\tau_\vecP$ are \WL{positive} piecewise constant functions defined on $\mathcal E_h$ with order $O(1)$ with respect to $h$ (cf. \cite{cockburn2009unified}). To guarantee the scaling of the equations, we set $\tau_V = \epsilon/\text{diam}(\Omega)$ and $\tau_\vecP = 1/\text{diam}(\Omega)$. Letting
\[
  (\cdot,\cdot)_{\mathcal E_h\backslash\partial\Omega} = \sum_{K\in \mathcal T_h} (\cdot,\cdot)_{\partial K\backslash\partial\Omega} ,
\]
we shall also write the discrete counterpart of the transmission conditions \eqref{e:transmission} together with the boundary conditions as the following: for all $\xi\in \hvkd k(0)$ and $\bm\eta\in[\hvk k(0)]^d$, there hold
\begin{equation}\label{e:transmission-discrete}
  \begin{aligned}
    (\epsilon\widehat{\vecE}^n_h\cdot\nu,\xi)_{\mathcal E_h\backslash\partial\Omega}  & = 0 , \\
    (\widehat{\mathbb U}^n_h\cdot \nu,\bm\eta)_{\mathcal E_h\backslash\partial\Omega} & = 0 .
  \end{aligned}
\end{equation}
and
\begin{equation}\label{e:boundary-discrete}
  \WL{
    \begin{aligned}
      \widehat{\vecP}_h^n                                                      & = 0, \quad \text{ on }\partial\Omega    ,                     \\
      \widehat{V}_h^n                                                          & = 0,\quad \text{ on }\Gamma_D    ,                            \\
      ((\epsilon\widehat{\vecE}^n_h+\widehat\vecP^n_h)\cdot\nu,\xi)_{\Gamma_N} & = (\epsilon\widehat{\vecE}^n_h\cdot\nu,\xi)_{\Gamma_N}  = 0 .
    \end{aligned}}
\end{equation}
Equations \eqref{e:full-system}--\eqref{e:boundary-discrete} form the fully discrete system and the well-posedness of this system is guaranteed by Lemma~3.1, Proposition~3.3 and Theorem~2.4 from \cite{cockburn2009unified}. \WL{Assuming that the primal variables $\vecP$, $V$, $\vecE$ and $\mathbb U$  are smooth in space, the $L^2(\Omega)$ errors for the fully discrete approximation are expected to get bounded by (cf. \cite{cockburn2010projection})
  \begin{equation}\label{i:expect-err}
    \|\vecP(t=t_n) - \vecP^n_h\|_{L^2(\Omega)}
    +\|V(t=t_n) - V^n_h\|_{L^2(\Omega)}
    +\|\vecE(t=t_n) - \vecE^n_h\|_{L^2(\Omega)}+
    \|\mathbb U(t=t_n) - \mathbb U^n_h\|_{L^2(\Omega)}\le C(h^{k+1} +\tau) .
  \end{equation}
}

\section{Numerical illustration}\label{s:simulation}
In this section, we provided some numerical tests to verify the stability and convergence of the fully discrete scheme \eqref{e:full-system}--\eqref{e:transmission-discrete}. All numerical tests are implemented using the \texttt{deal.II} finite element library \cite{arndt2020deal} using quadrilateral meshes in the two dimensional space. We shall use piecewise bilinear or bi-quadratic elements in our numerical tests. We finally note out that since \eqref{e:full-system} is a nonlinear system, we solve it using the Newton iteration and each iteration is solved by a direct solver based on the \WL{``klu'' method built in the \WL{\texttt{Aemesos}} package} \footnote{https://docs.trilinos.org/dev/packages/amesos/doc/html/index.html}.

\subsection{Convergence tests}
Let $\Omega=(0,1)^2$. We apply the fully-discrete scheme \eqref{e:full-system} to discretize following the time dependent problem
\begin{equation}\label{e:system-modified}
  \begin{aligned}
    \DIV(-\GRAD V + \vecP)                                    & = 0,\quad             &  & \text{ in } \Omega\times(0,T],      \\
    \partial_t\vecP -\Delta\vecP - \mathbf DF(\vecP) +\GRAD V & = \mathbf G,\quad     &  & \text{ in } \Omega\times(0,T],      \\
    \vecP                                                     & = 0,\quad             &  & \text{ on }\partial\Omega,          \\
    V                                                         & = 0,\quad             &  & \text{ on } \{x_2=0\}\cup\{x_2=1\}, \\
    (-\GRAD V + \vecP)\cdot \nu                               & =0,\quad              &  & \text{ on } \{x_1=0\}\cup\{x_1=1\}, \\
    \vecP(0)                                                  & =\vecP_0,       \quad &  & \text{ in }\Omega .
  \end{aligned}
\end{equation}
In order to check the convergence of the proposed scheme, we set the analytic solutions
\[
  \begin{aligned}
    V   & = e^{-2\pi^2 t}\sin(\pi x_1) \sin(\pi x_2),                       \\
    P_1 & = -E_1 = e^{-2\pi^2 t}\pi\cos(\pi x_1)\sin(\pi x_2), \text{ and } \\
    P_2 & = -E_2 = e^{-2\pi^2 t}\pi\sin(\pi x_1)\cos(\pi x_2)
  \end{aligned}
\]
to obtain the data function $\mathbf G$ and and initial polarization field $\vecP_0$. We also note that when considering the approximation of $\vecP^n$, we should evaluate $\mathbf G$ at $t_n = \tau n$.

\subsubsection*{Convergence in time} To test the convergence in time, we use a fixed uniform mesh together with continuous piecewise bilinear elements. The mesh size is small enough so that the error from the time discretization dominates the total error. Under a uniform subdivision with $36864$ \WL{degrees of freedom}, Figure~\ref{f:time-convergence} reports $L^2(\Omega)$ errors of $\vecE$, $V$, $\mathbb U$ and $\vecP$ at the final time $T=0.1$ against the time step $\tau$. We observe that all error plots decay in the first order. We also point out that the error plots for $\vecE$ and $\vecP$ almost coincide in Figure~\ref{f:time-convergence}.
\begin{figure}[hbt!]
  \begin{center}
    \includegraphics[scale=0.5]{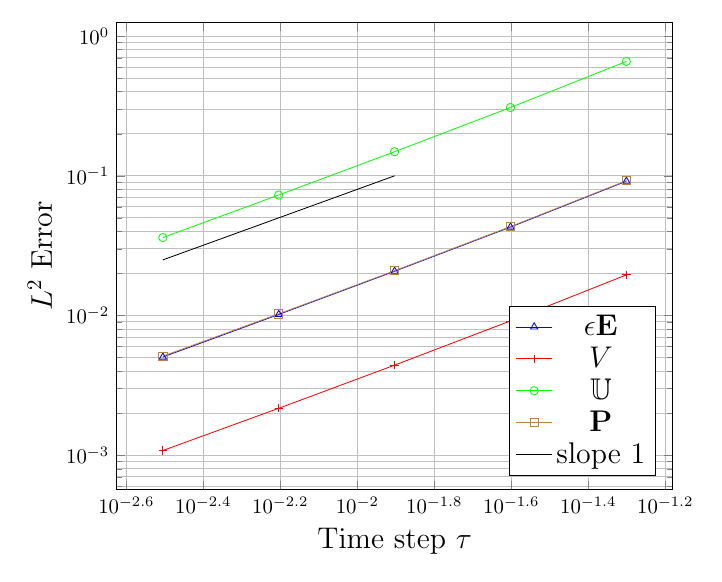}
  \end{center}
  \caption{$L^2(\Omega)$ errors of each primal variables in the system \eqref{e:system-modified} against the time step $\tau$. Note that the errors for $\vecE$ and $\vecP$ are so closed that their plots almost coincide.}
  \label{f:time-convergence}
\end{figure}

\subsubsection*{Convergence in space}
\WL{We next check the rate of convergence in space with an indirect approach. The idea is to guarantee that the error from the time discretization does not dominate the total error. To this end, we start to solve the discrete system under a uniform coarse mesh containing $16$ cells and setting the time step $\tau=T$. Then we refine the mesh globally and divide the time step by $2^{k+1}$, where we recall that $k=1$ or $2$ denotes the degree of bi-polynomials. So the time step $\tau = ch^{k+1}$ and according to the expected error bound \eqref{i:expect-err}, the total error should behave like $O(h^{k+1})$.}

Figure~\ref{f:space-convergence} reports the $L^2(\Omega)$-error against the number of \WL{degrees of freedom} (\#DoFs) at $T=0.1$ using both bilinear and bi-quadratic elements. We observe that all $L^2(\Omega)$ errors for primal variables behave like $O(h^{k+1})$ since $h\sim \#\text{DoFs}^{-1/d}$.

\begin{figure}[hbt!]
  \begin{center}
    \includegraphics[scale=0.5]{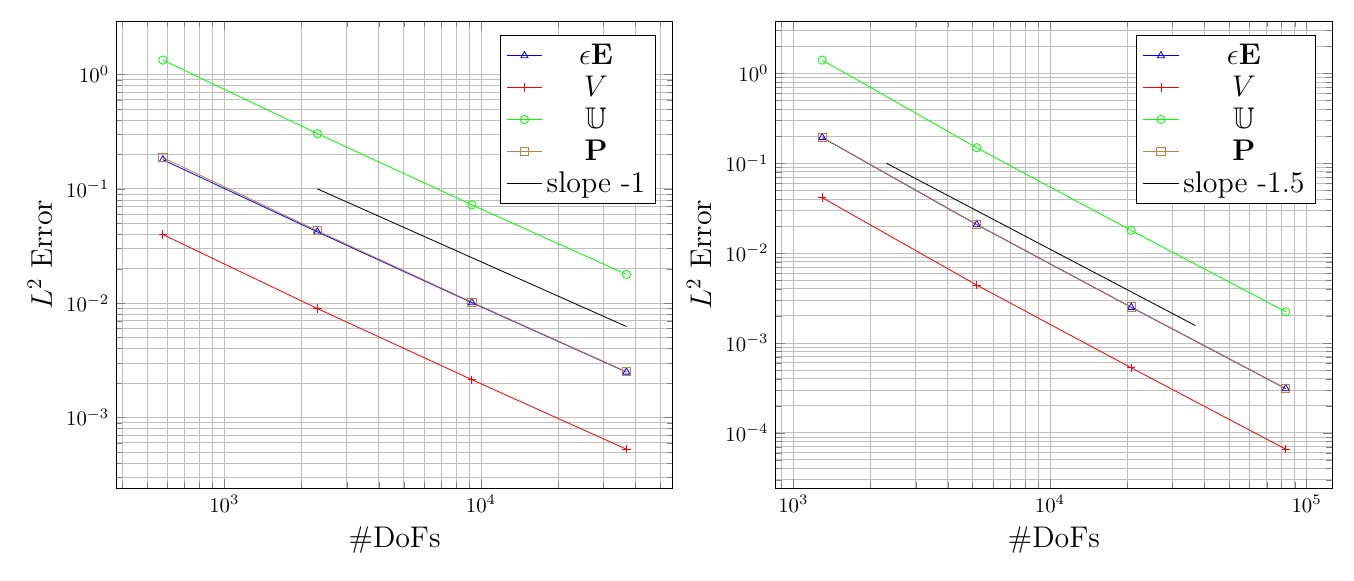}
  \end{center}
  \caption{$L^2(\Omega)$ errors for each primal variable in the system \eqref{e:system-modified} against \#DoFs using continuous piecewise bilinear (left) and piecewise bi-quadratic elements (right), respectively. Note that similar to Figure~\ref{f:time-convergence}, the errors for $\vecE$ and $\vecP$ are so closed that their plots almost coincide.}
  \label{f:space-convergence}
\end{figure}

\subsection{Monolayer tests}
Now we test the numerical scheme \eqref{e:full-system} in a monolayer device which is $80n m$ long and $40 n m$ wide. Figure~\ref{f:mesh} shows a uniform subdivision of the test device. In terms of the time discretization, we set the final time $T=160ns$ and the time step $\tau=T/1000$. For the parameters of the \ac{GLD} model \eqref{e:energy-component-wise}, we set
$\epsilon_b=5F/ m$, $\alpha=-1.54\times 10^9 m/ F$, $\beta=-2.65\times 10^{12}m^5/ FC^2$, $\gamma = 2.6\times 10^{15} m^9/ FC^4$ and $g=10^{-8} m^3/ F$.

\begin{figure}[hbt!]
  \begin{center}
    \includegraphics[scale=0.15]{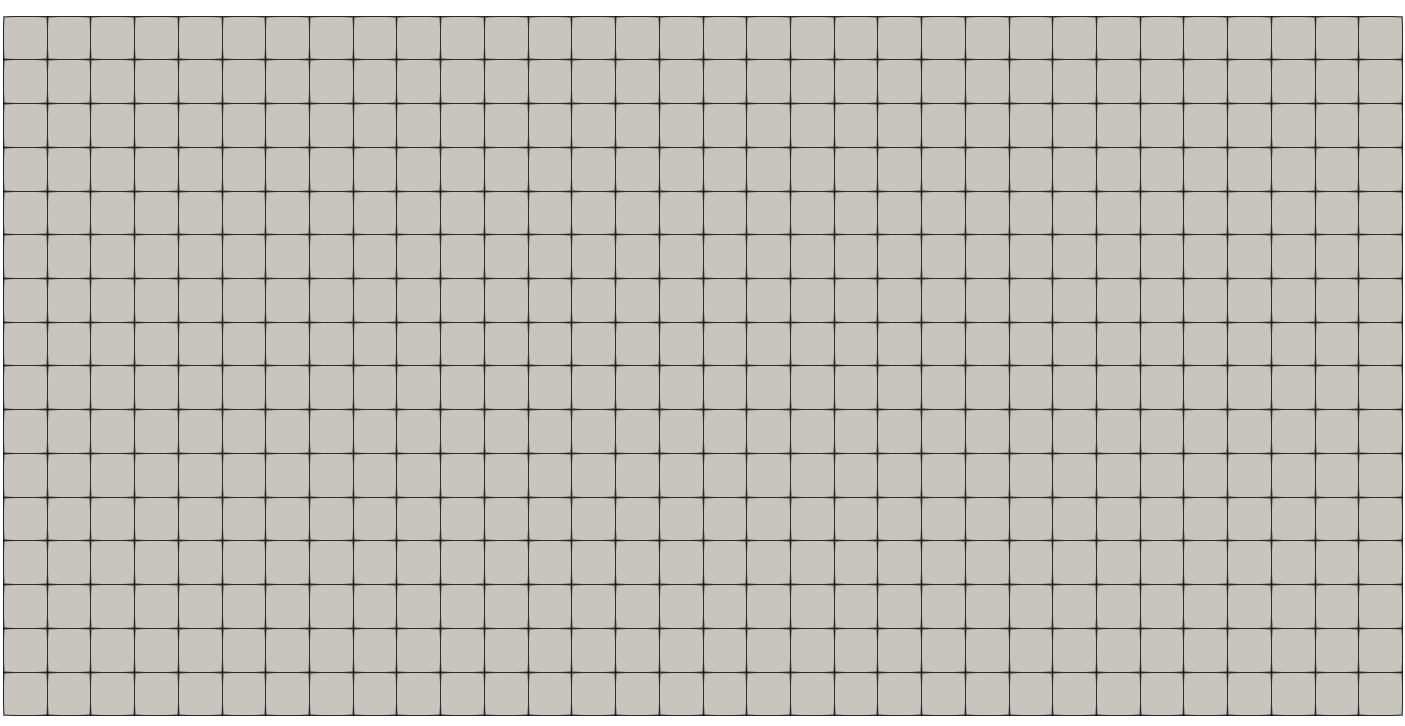}
  \end{center}
  \caption{Subdivision of the monolayer test device.}
  \label{f:mesh}
\end{figure}

\subsubsection{Test for the energy stability}
We first numerically check the energy stability of our proposed scheme. Here we set zero electric distribution in the device (\ie $\rho=0$) and apply zero voltages at both the top and bottom. We also set the zero Neumann boundary conditions on both left and right boundaries of the device. That is $(\epsilon\vecE+\vecP)\cdot\nu = 0$ on $\{x_1=0\}\cup\{x_1 = 80n m\}$. Starting from the initial polarization field
\[
  P_i(t=0) = \left\{
  \begin{aligned}
    0.1 V/m,  & \quad \text{ if } x_1\le 40 nm, \\
    -0.1 V/m, & \quad \text{ if } x_1\ge 40 nm, \\
  \end{aligned}
  \right .
\]
we plot the approximation of the density of the total energy \eqref{e:energy}, \ie
\[
  d_h(\vecP_h^n) := \frac{1}{|\Omega|}\int_{\Omega}\frac{\epsilon}{2}|\GRAD V_h^n|^2
  + F(\vecP_h^n) + \sum_{i=1}^2 \frac{1}{2g}|\mathbb U_h^n|^2 \diff x,
\]
in the time range $[0,160ns]$ in Figure~\ref{f:energy}. In order to observe the decay of the energy density, we also plot $\text{sign}(d_h)\log_{10}(d_h)$ \WL{versus} time in Figure~\ref{f:energy} (noting that the minimum energy is negative). The approximation of the polarization field $\vecP_h$ at $80ns$ and $160ns$ as well as corresponding $\ROT\vecP_h$ are provided in Figure~\ref{f:polarization-nonzero}.

\begin{figure}[hbt!]
  \begin{center}
    \begin{tabular}{cc}
      \includegraphics[width=0.5\textwidth]{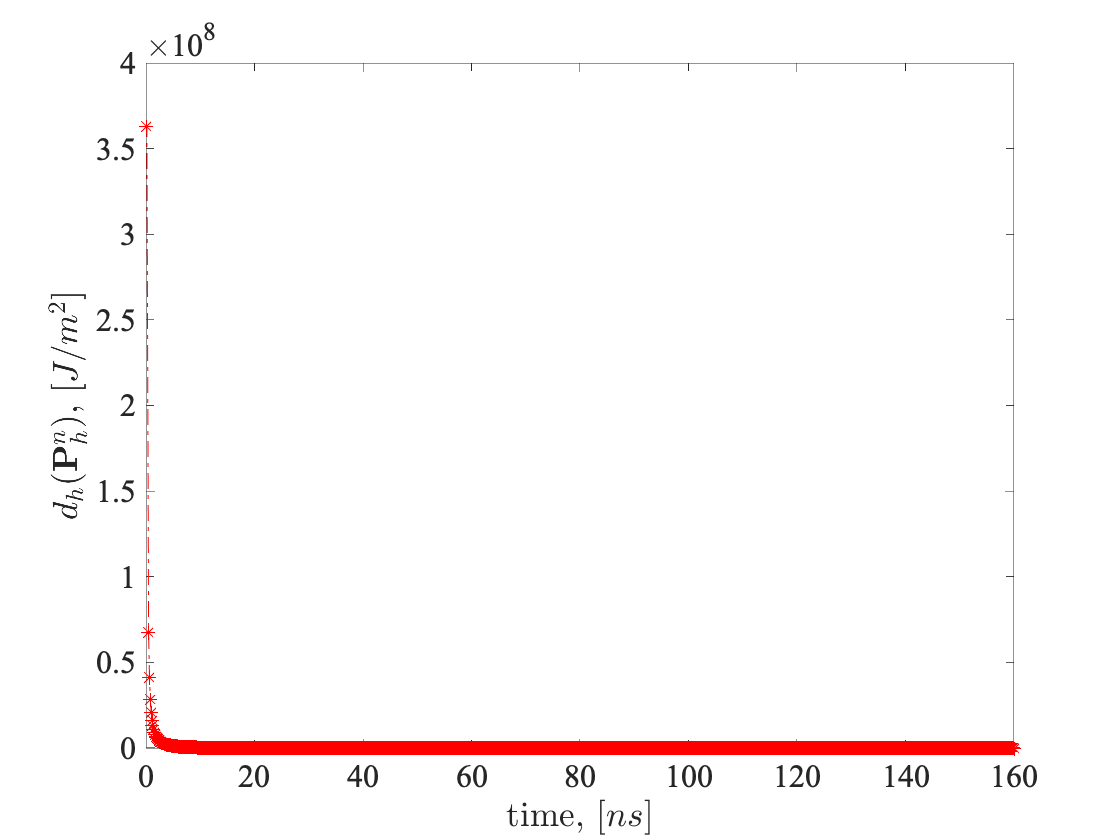} & \includegraphics[width=0.5\textwidth]{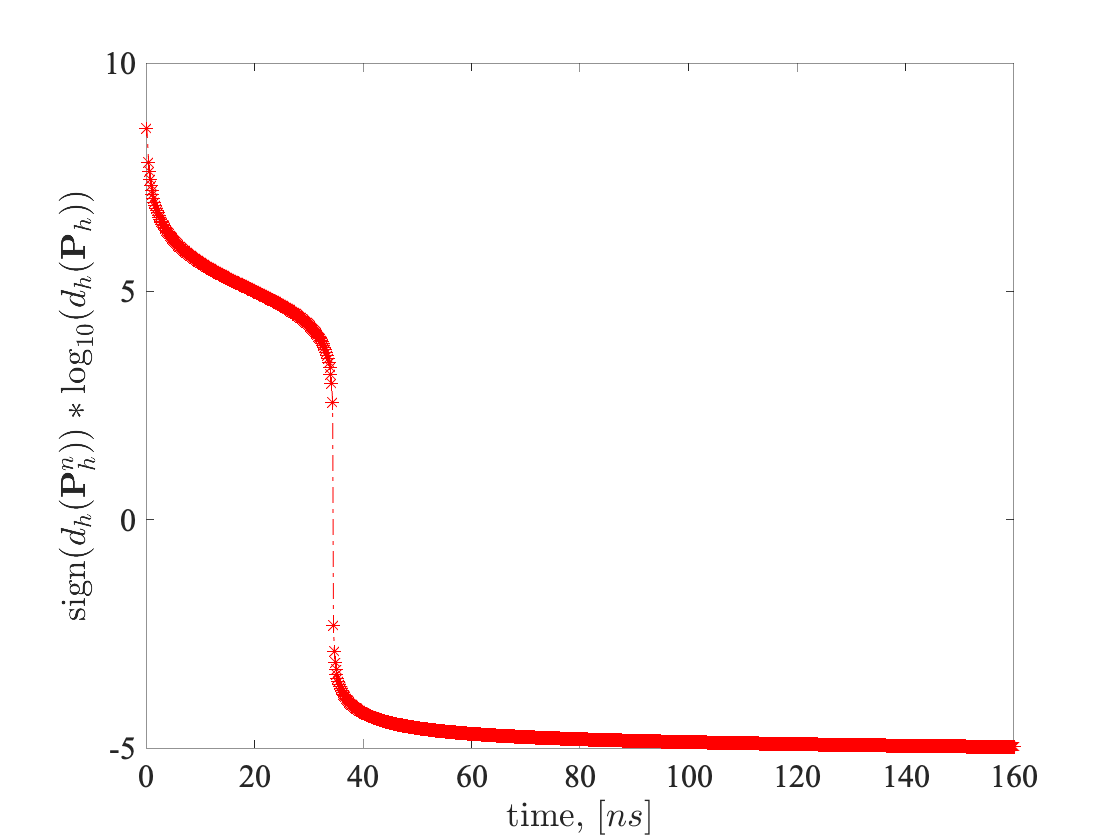} \\
    \end{tabular}
  \end{center}
  \caption{Test for the energy stability: decay of the energy (left) from $0ns$ and $160ns$ and its logarithm plot (right).}
  \label{f:energy}
\end{figure}

\begin{figure}[hbt!]
  \begin{center}
    \begin{tabular}{cc}
      \includegraphics[width=0.5\textwidth]{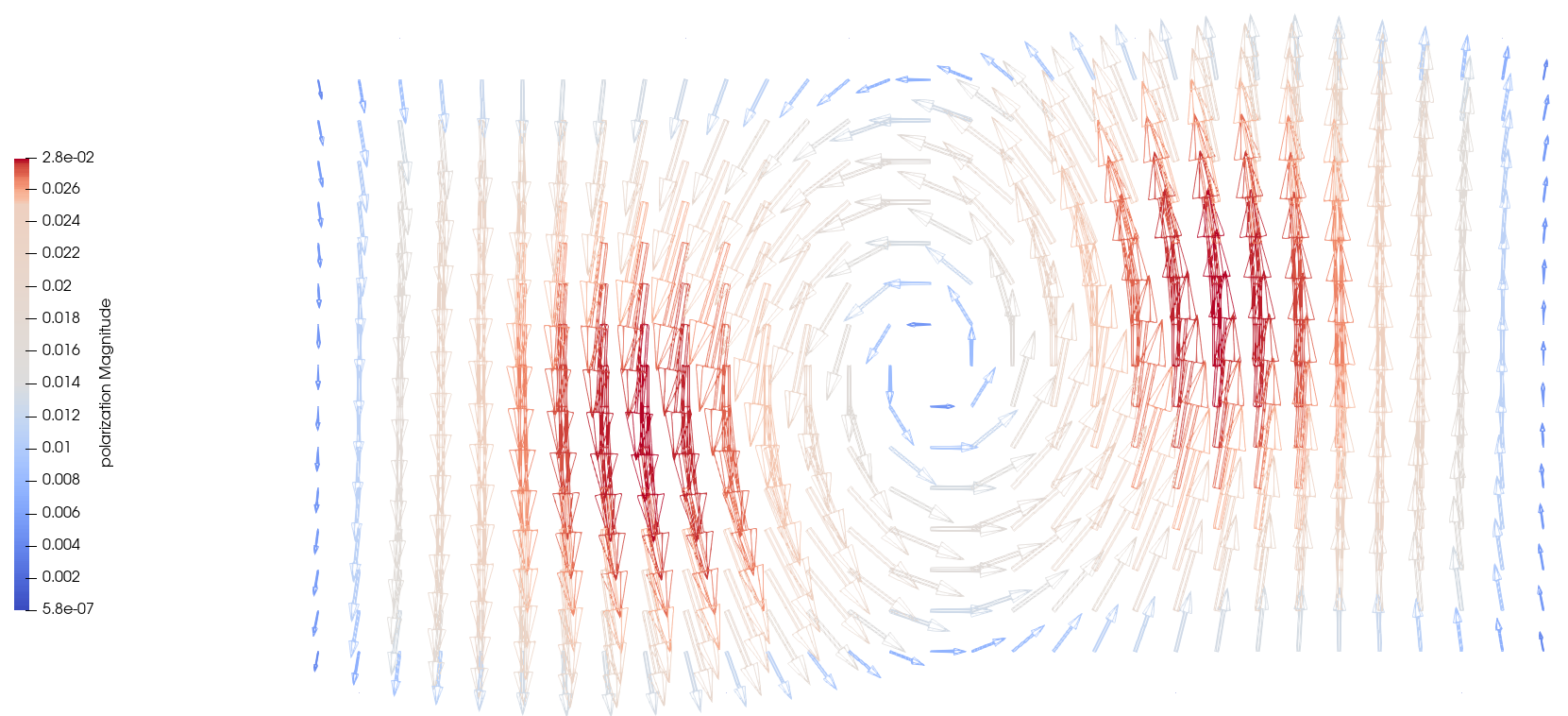}     & \includegraphics[width=0.5\textwidth]{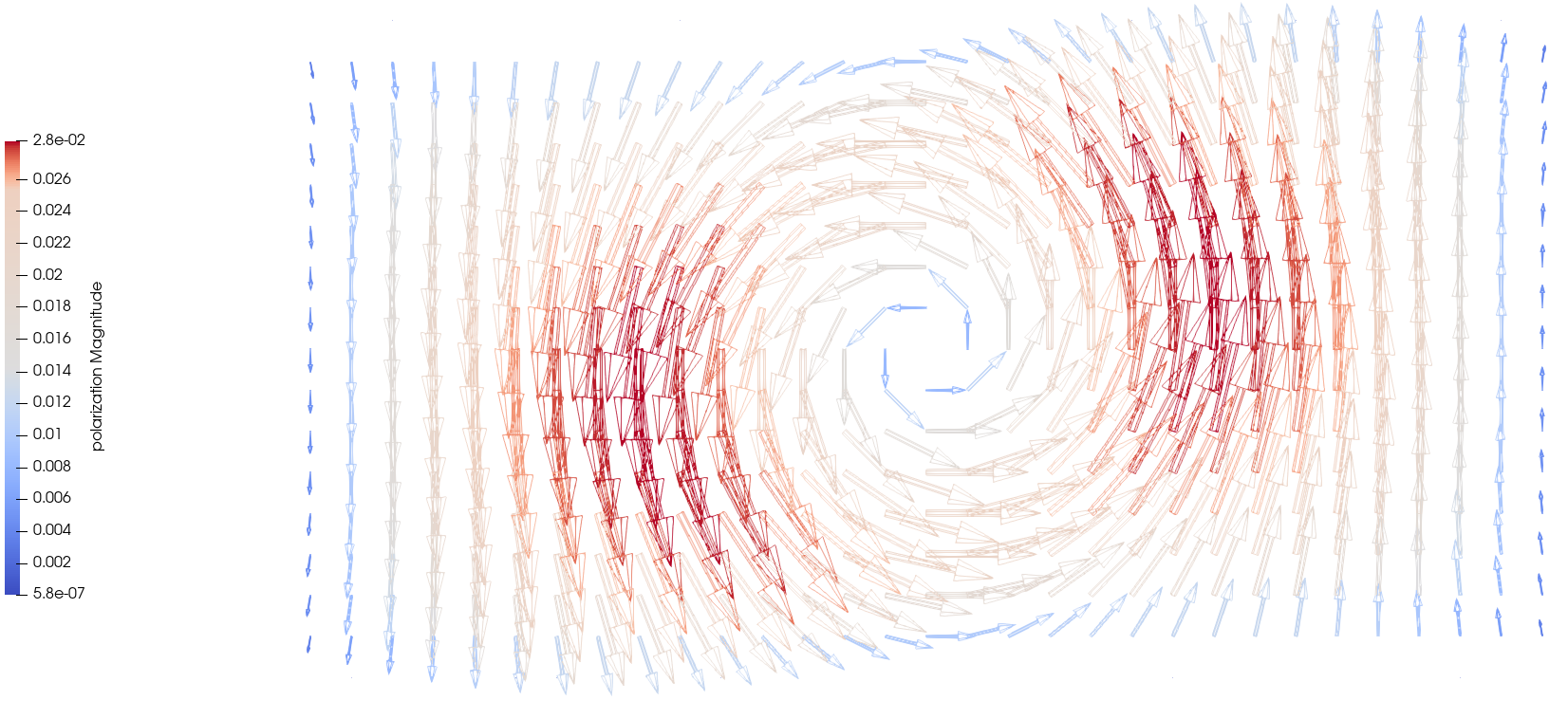}     \\
      \includegraphics[width=0.5\textwidth]{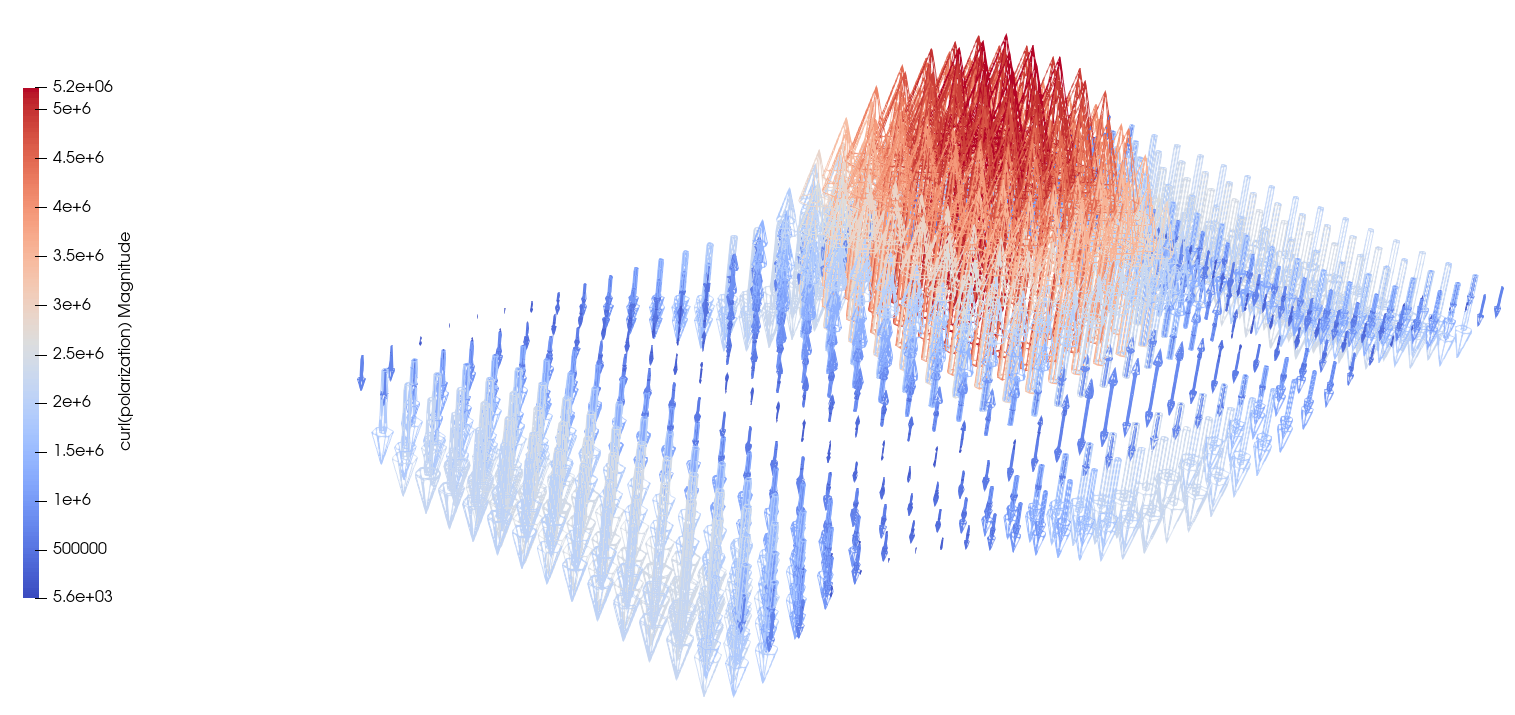} & \includegraphics[width=0.5\textwidth]{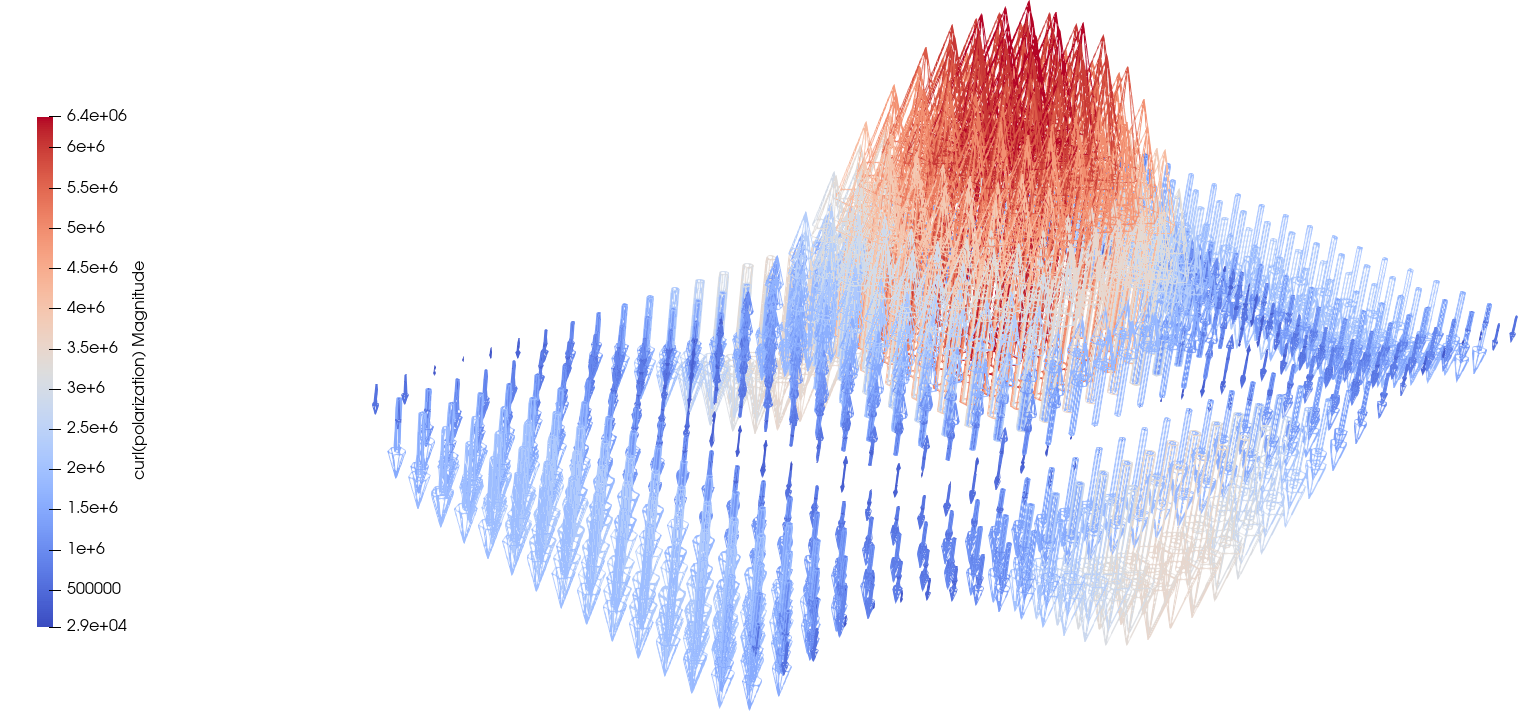} \\
    \end{tabular}
  \end{center}
  \caption{Test for the energy stability: approximations of the polarization field $\vecP_h$ at $80ns$ (top-right) and $160ns$ (top-right) as well as the plots for $\ROT\vecP_h$ at $80ns$ (bottom-right) and $160ns$ (bottom-right) . The magnitudes of $\vecP_h$ and $\ROT\vecP_h$ are rescaled by factors $3\times 10^{-7}$ and $4\times 10^{-15}$ respectively.}
  \label{f:polarization-nonzero}
\end{figure}

\subsubsection{An integration test}
Starting from the same discretization settings from the previous section, we fix the zero voltage at the bottom of the test device and apply different voltages on the top side with a contact bias following a triangle signal whose the first period is defined as
\[
  V(t) = \left\{
  \begin{aligned}
    5t\, V,      & \quad \text{ if } 0\le t\le 20ns,     \\
    200-5t\, V,  & \quad \text{ if } 20ns \le t\le 60ns, \\
    -400+5t\, V, & \quad \text{ if } 60ns \le t\le 80ns; \\
  \end{aligned}
  \right .
\]
see also the signal plot in Figure~\ref{f:signal}. In order to obtain accurate approximations, we locally refine the subdivision provided by Figure~\ref{f:mesh} every five time steps. More precisely speaking, after the computation at the time step $n=5k$ with $k=0,1,2,\ldots$, we estimate the error of the solution $V_h^n$ on each cell $K$ using the so-called Kelly error estimator \cite{kelly1983posteriori}:
\[
  \eta_K^2 = \sum_{F\in\partial K} \frac{\text{diam}(K)}{24}[\GRAD V^n_h\cdot \nu]^2 .
\]
Then, we refine cells whose corresponding $\eta_K$ are 1\% largest using the quad-refinement strategy (cf. \cite{bonito2010quasi}). The left plot of Figure~\ref{f:displacement} reports the discrete counterpart of the displacement vector
\[
  \int_S \vecD(t_n)\cdot\nu \diff s \approx \mathscr D := \int_S (\epsilon \widehat{\vecE}^n_h +\widehat\vecP^n_h)\cdot\nu \diff s,
\]
at top ($S=\{x_2=40ns\}$) and bottom ($S=\{x_2=0ns\}$) of the device against contact bias in one and a half periods (120ns in total). The right of Figure~\ref{f:displacement} shows the corresponding displacement current density $\int\partial_t \vecD\cdot \nu \diff s$ \WL{versus} time. Here we approximate this quantity by using the difference quotient $\diff_{t,\tau}\mathscr D$. Both plots in Figure~\ref{f:displacement} shows that the polarization field switches its direction along $x_2$ direction. In order to show the dynamic of the polarization field, we report in Figure~\ref{f:polarization-zero} the $x_2$-component of $\widehat{\vecP}^n_h$ along $x_1=40ns$ at $t=20j\, ns$ for $j=1,\ldots,5$. The corresponding background subdivisions are reported in Figure~\ref{f:polarization-zero} as well. To illustrate the improvement of the accuracy from the adaptive refinement strategy, we plot $\widehat V^n_h$ along $x_1 = 40ns$ both on the meshes in Figure~\ref{f:polarization-nonzero} and on the coarse mesh in Figure~\ref{f:mesh}.

\begin{figure}[hbt!]
  \begin{center}
    \includegraphics[scale=0.35]{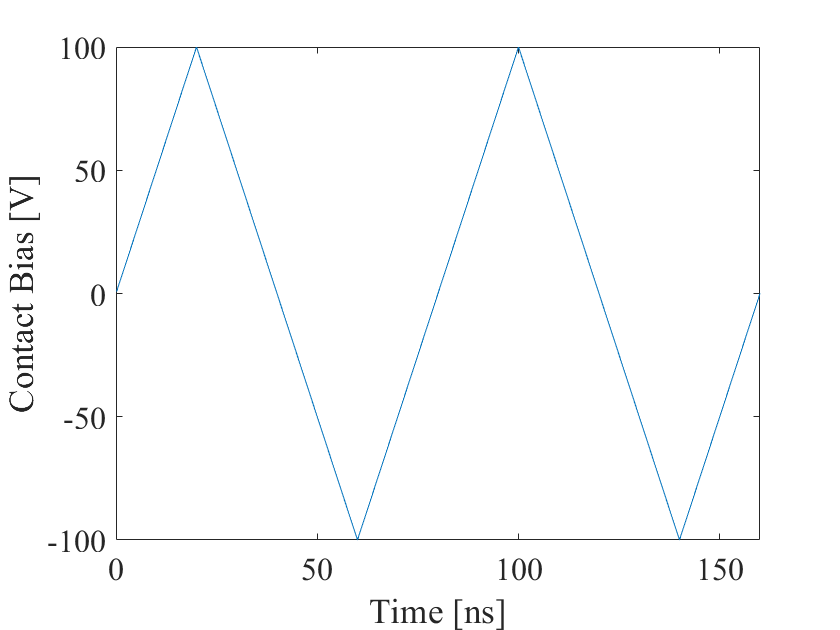}
  \end{center}
  \caption{Signal of the contact bias in two periods.}
  \label{f:signal}
\end{figure}

\begin{figure}[hbt!]
  \begin{center}
    \begin{tabular}{cc}
      \includegraphics[width=0.5\textwidth]{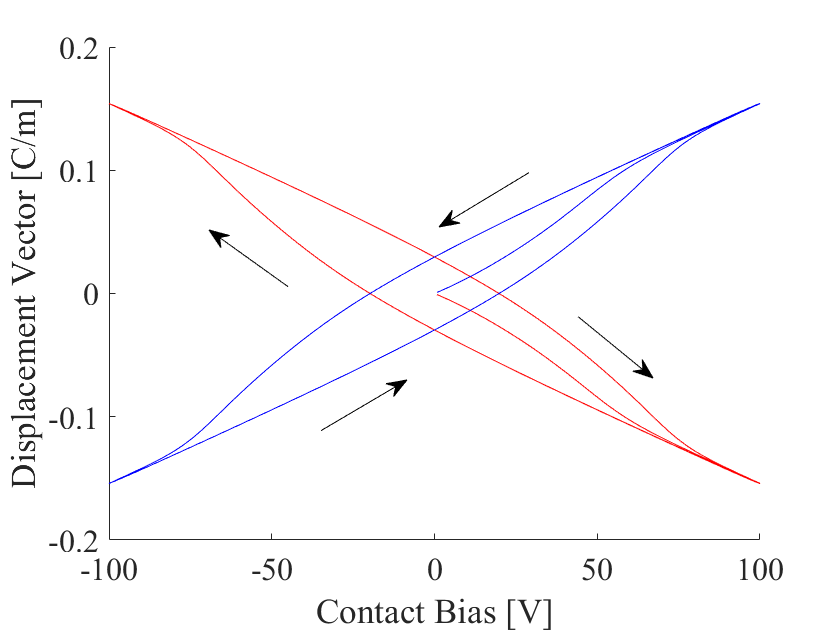} & \includegraphics[width=0.5\textwidth]{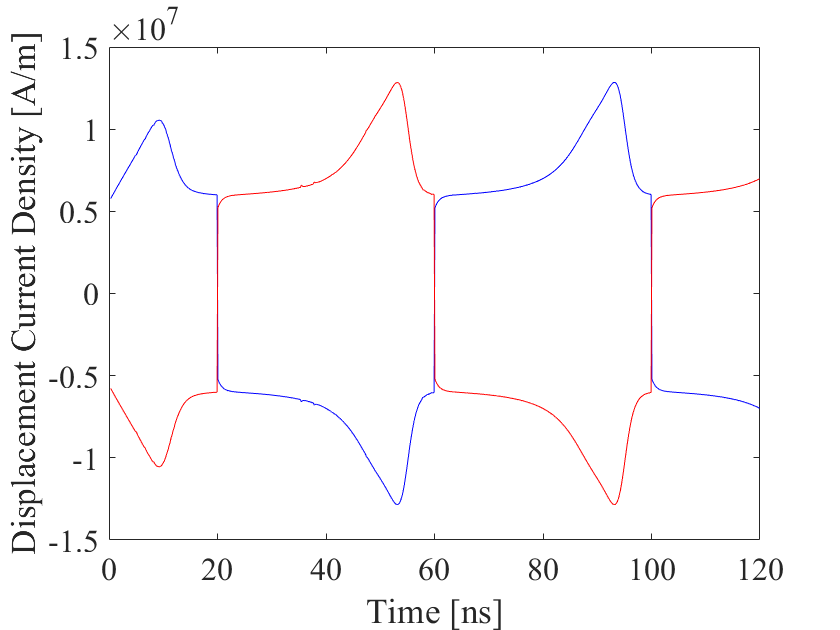} \\
    \end{tabular}
  \end{center}
  \caption{The left plot shows the displacement vector on top (in red) and bottom (in blue) \WL{versus} the variation of the contact bias in $120ns$ (one and a half periods). The right plot shows the displacement current density against time.}
  \label{f:displacement}
\end{figure}

\begin{figure}[hbt!]
  \begin{center}
    \begin{tabular}{cc}
      \includegraphics[scale=0.062]{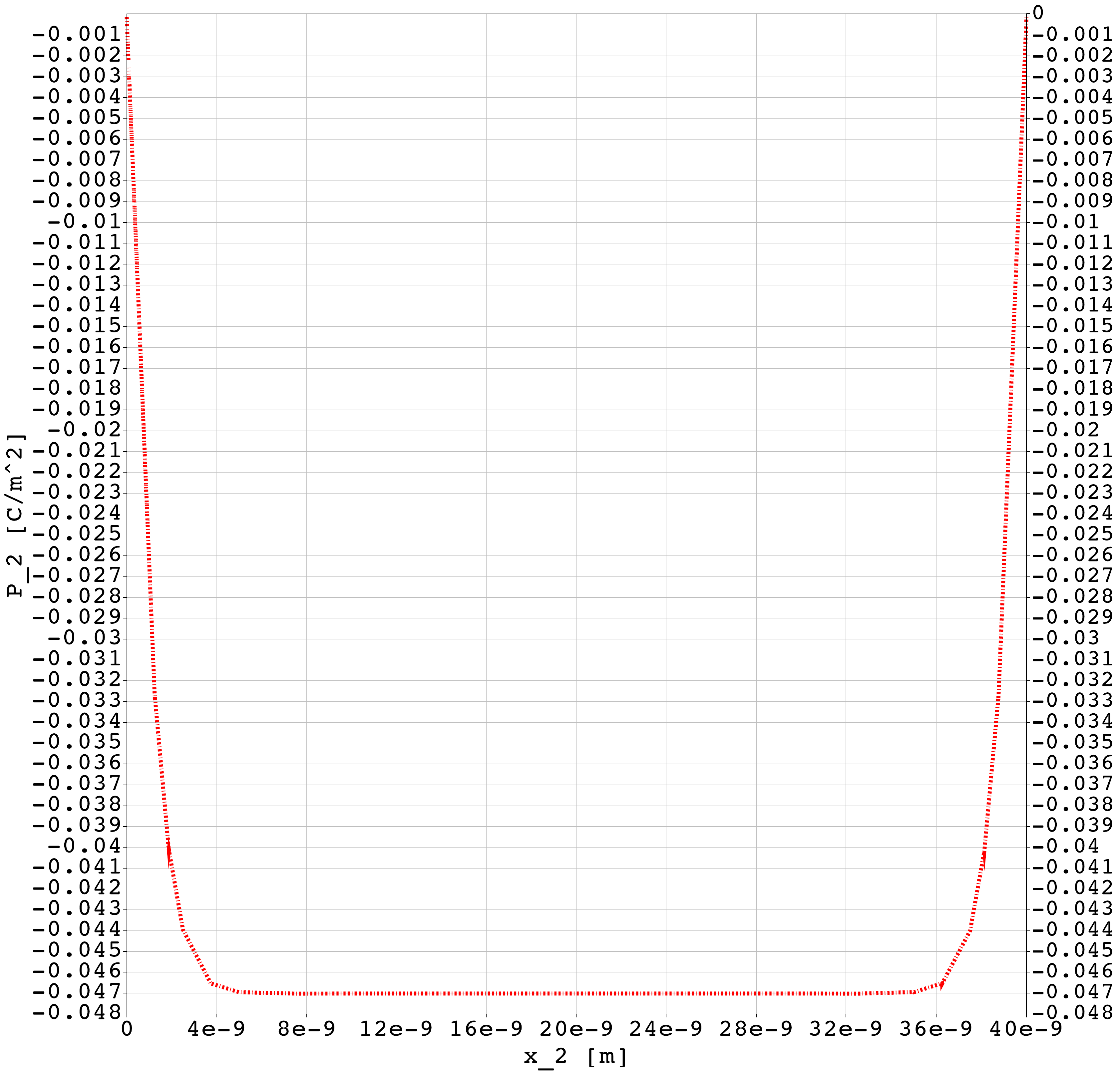}  & \includegraphics[scale=0.062]{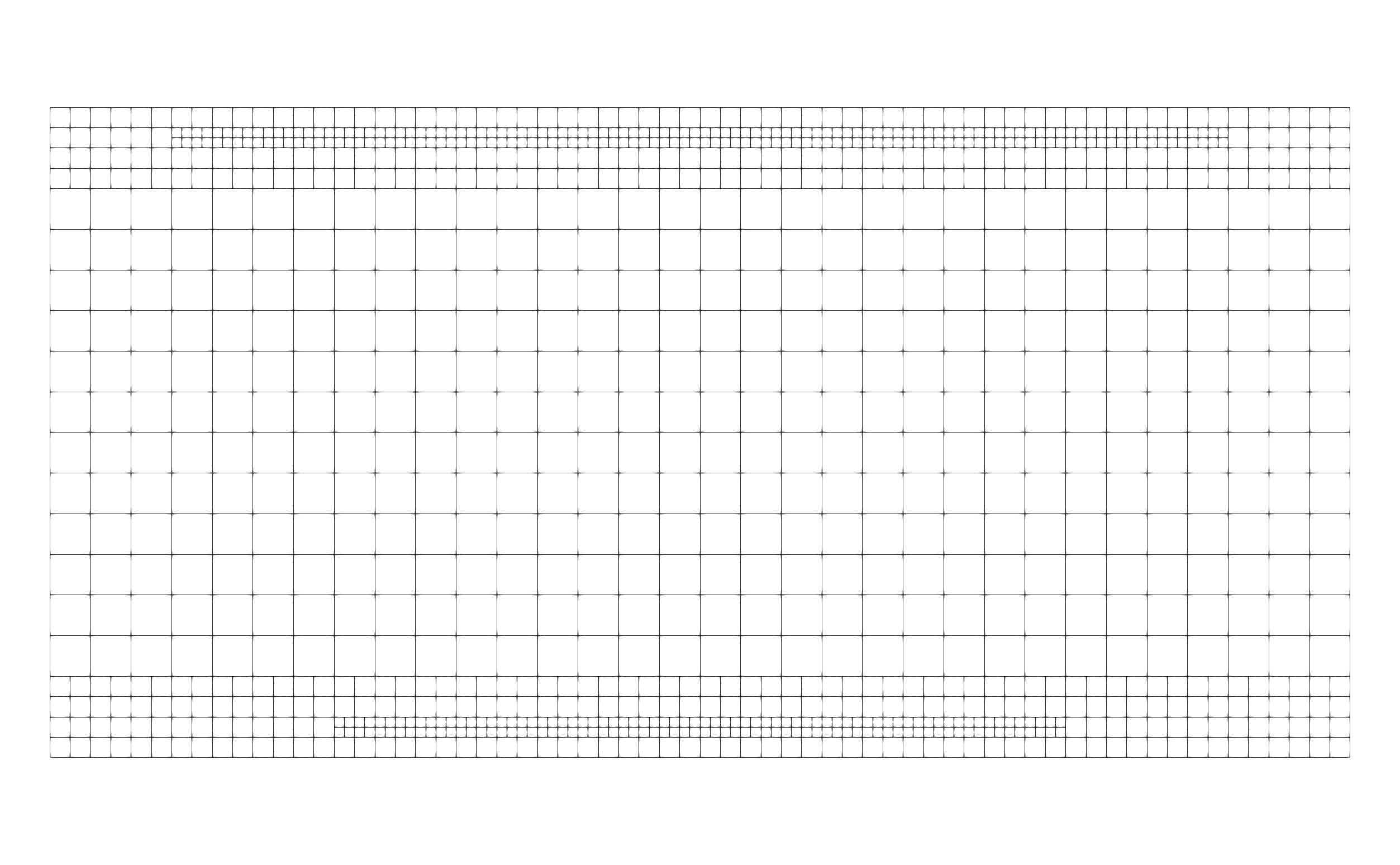}  \\
      \includegraphics[scale=0.062]{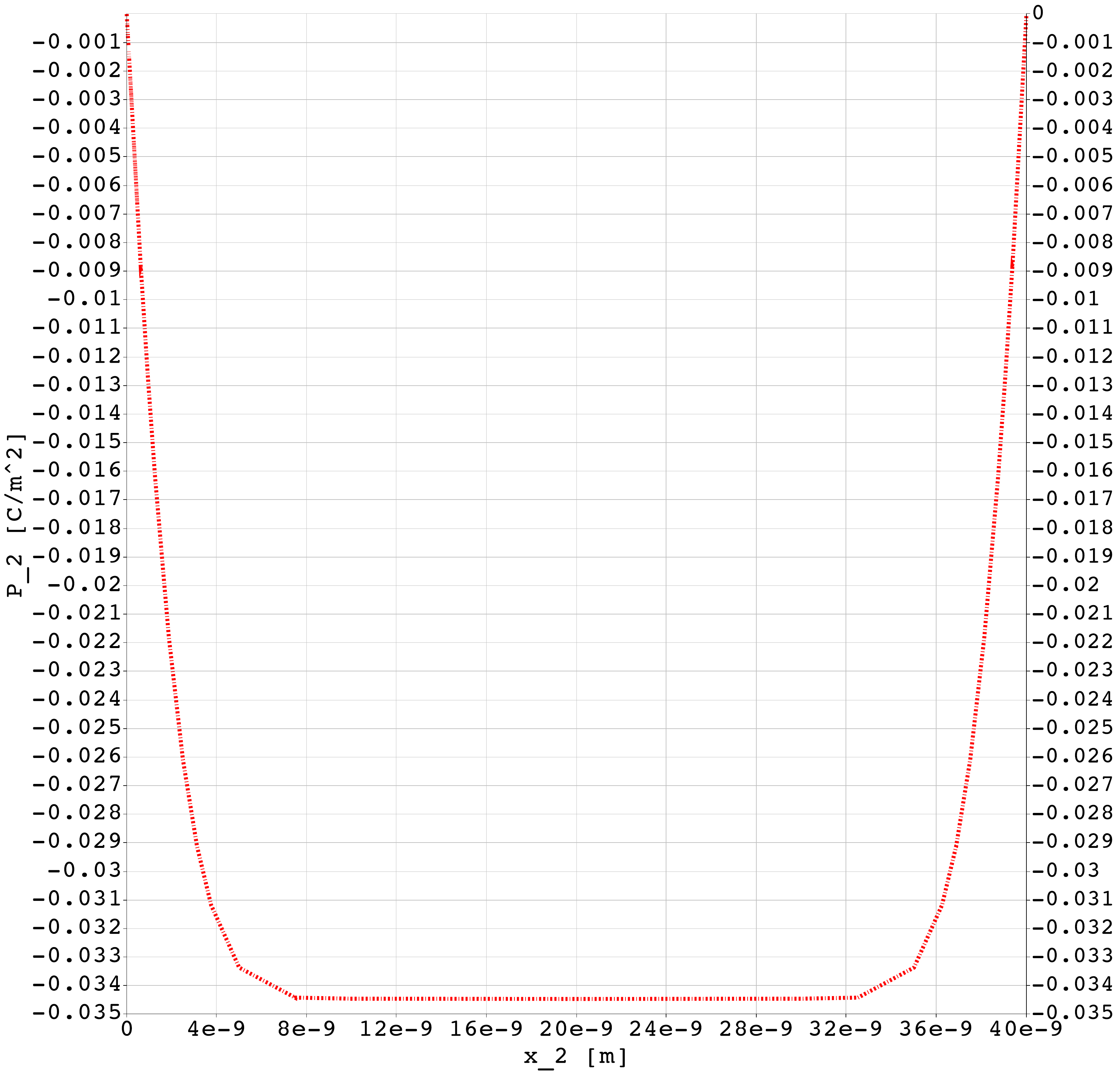}  & \includegraphics[scale=0.062]{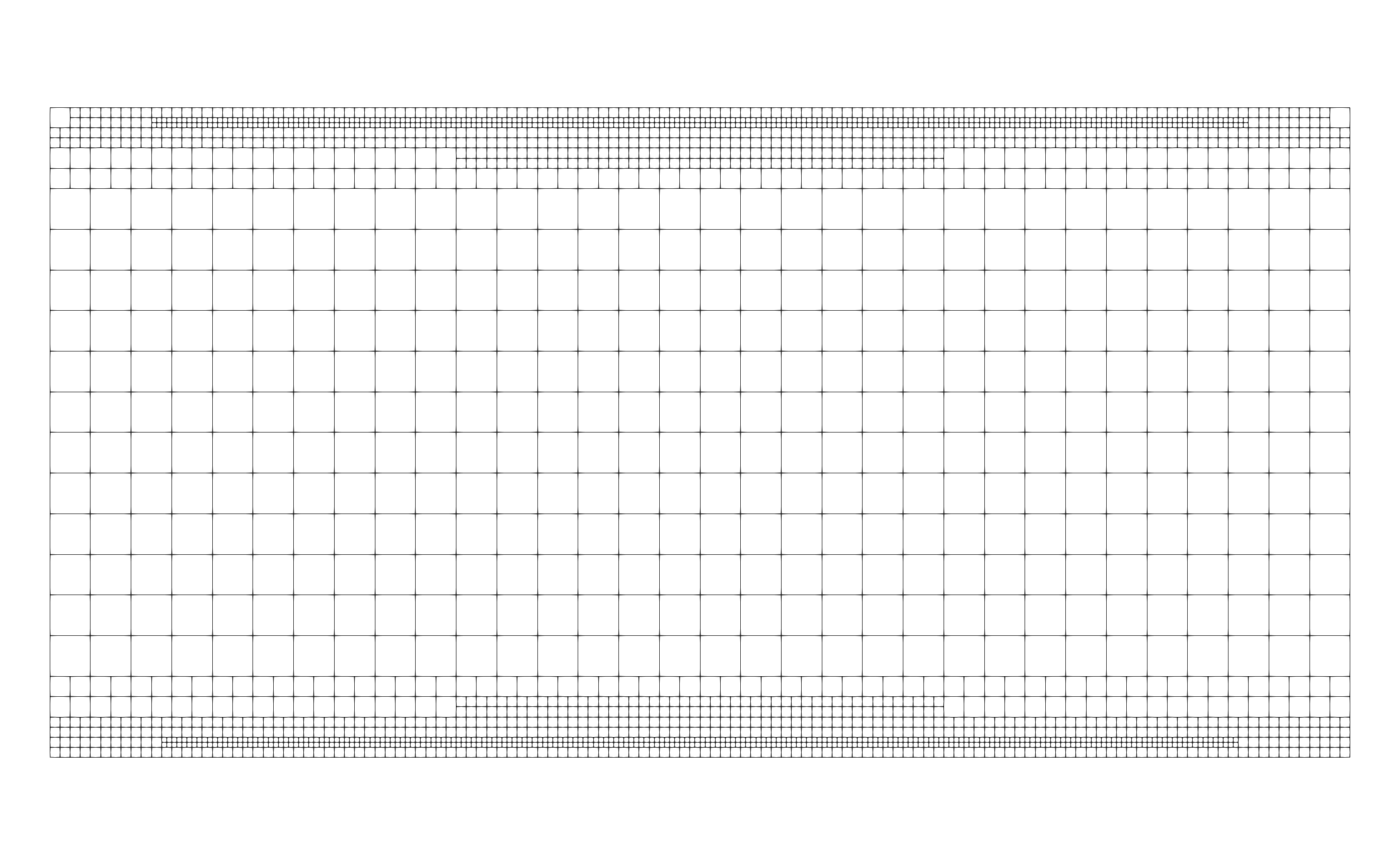}  \\
      \includegraphics[scale=0.062]{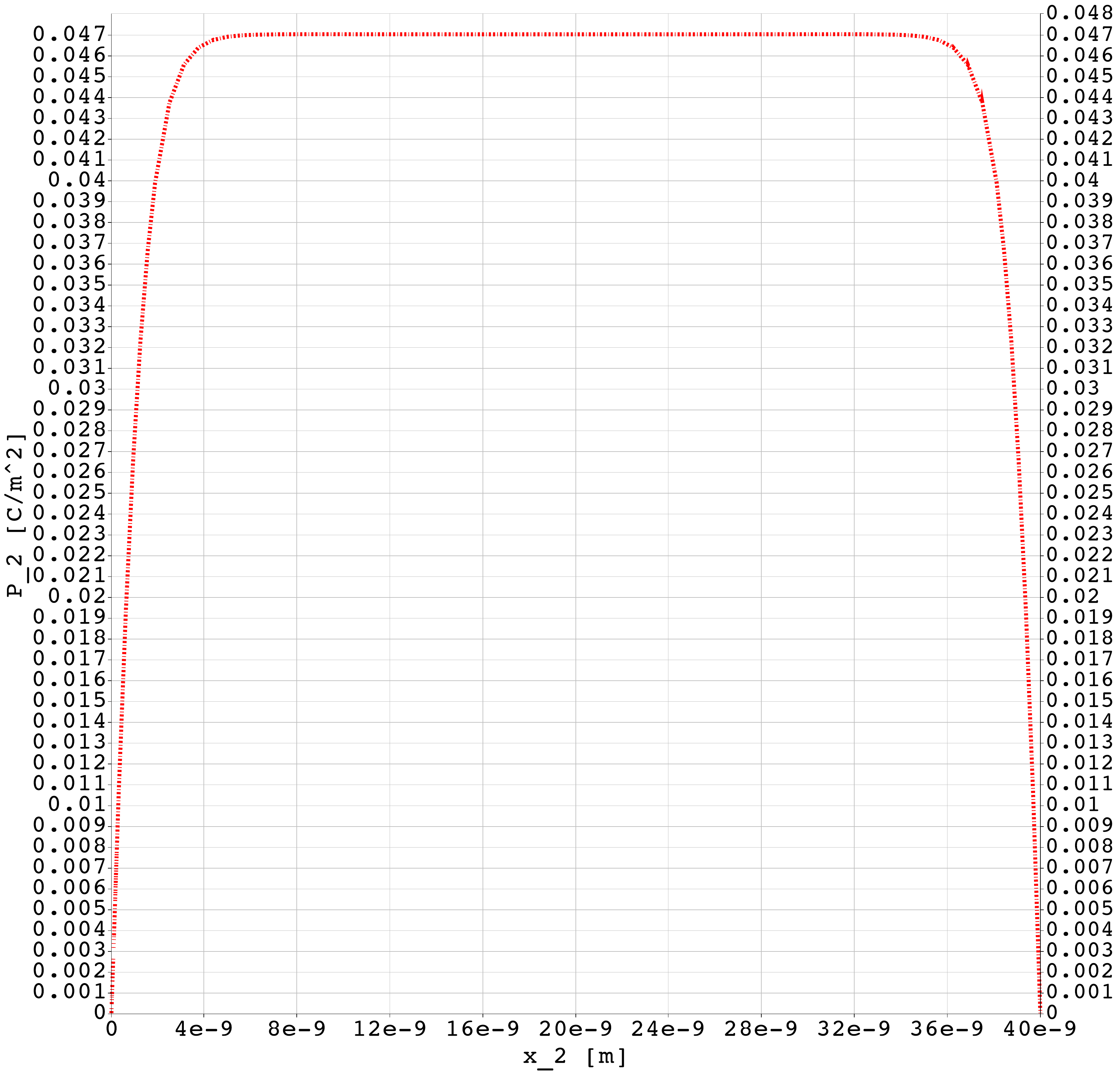}  & \includegraphics[scale=0.062]{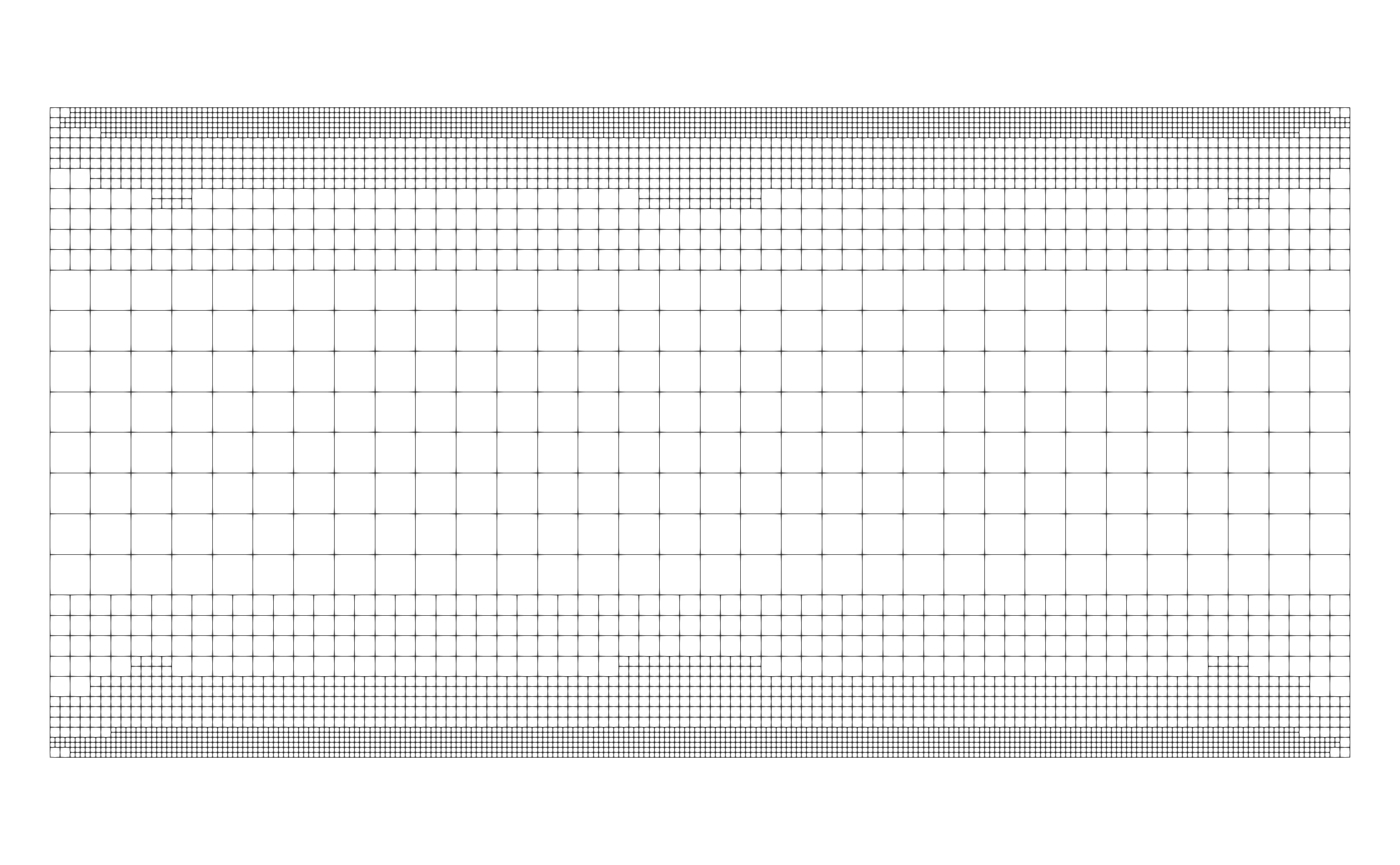}  \\
      \includegraphics[scale=0.062]{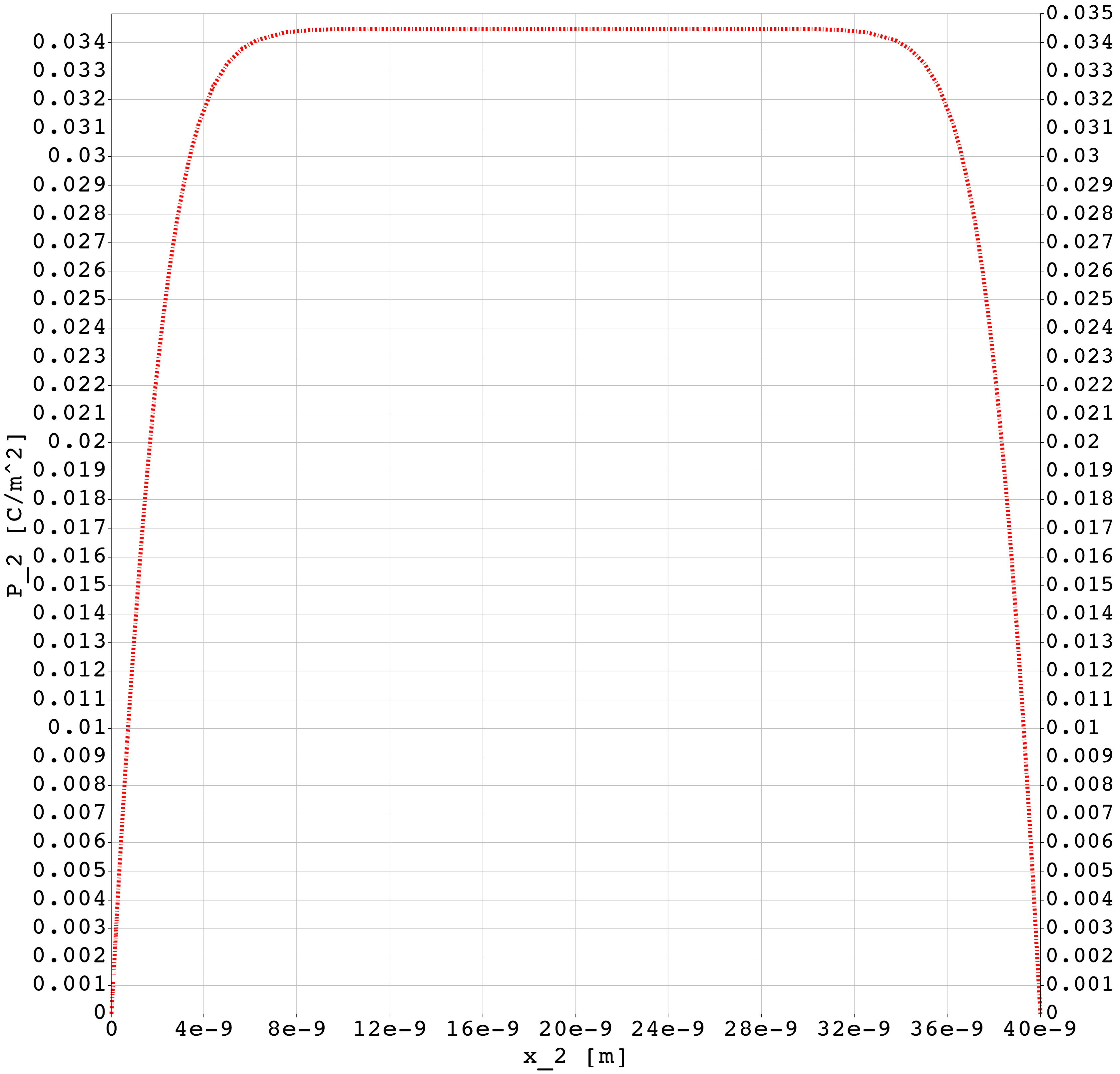}  & \includegraphics[scale=0.062]{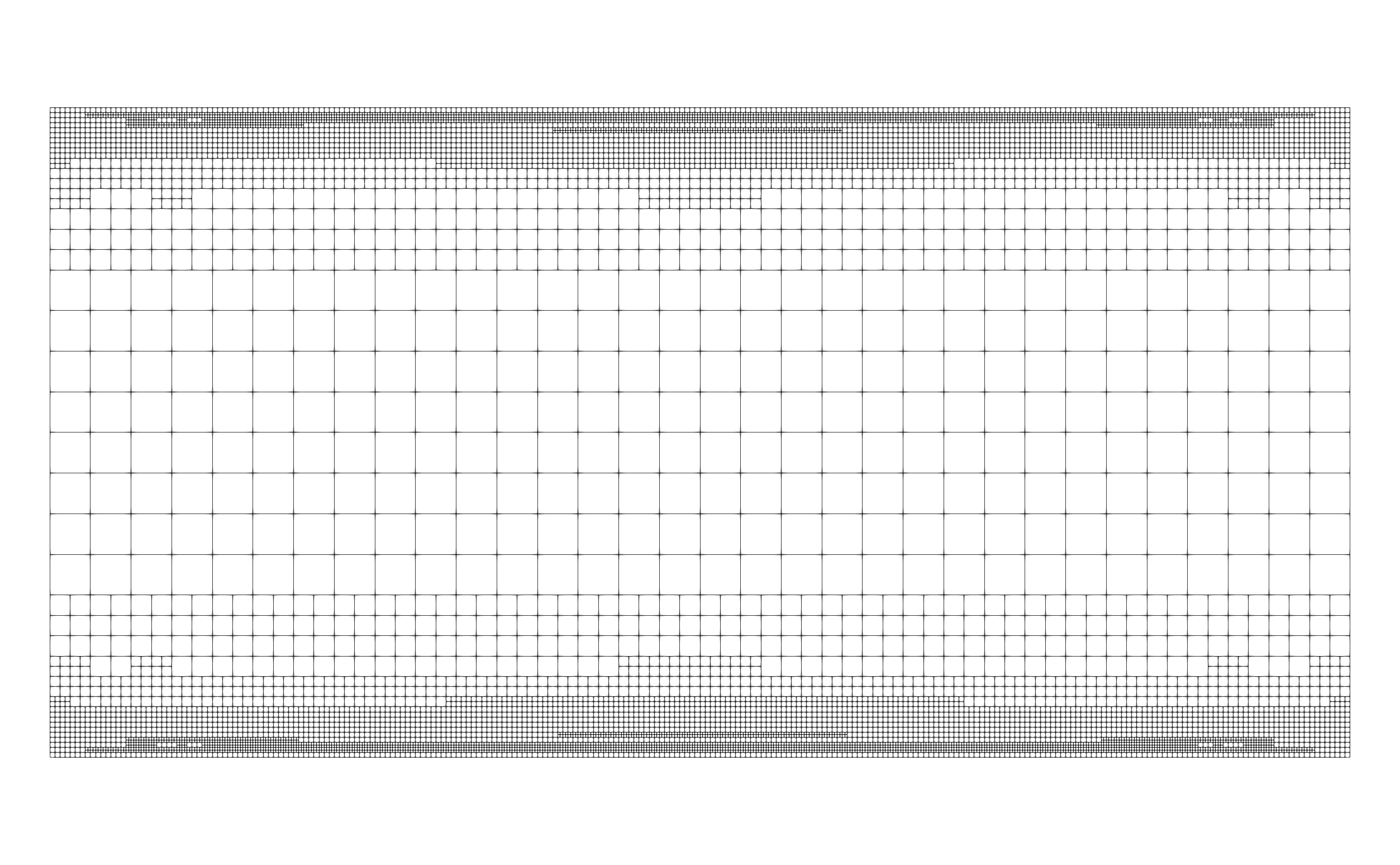}  \\
      \includegraphics[scale=0.062]{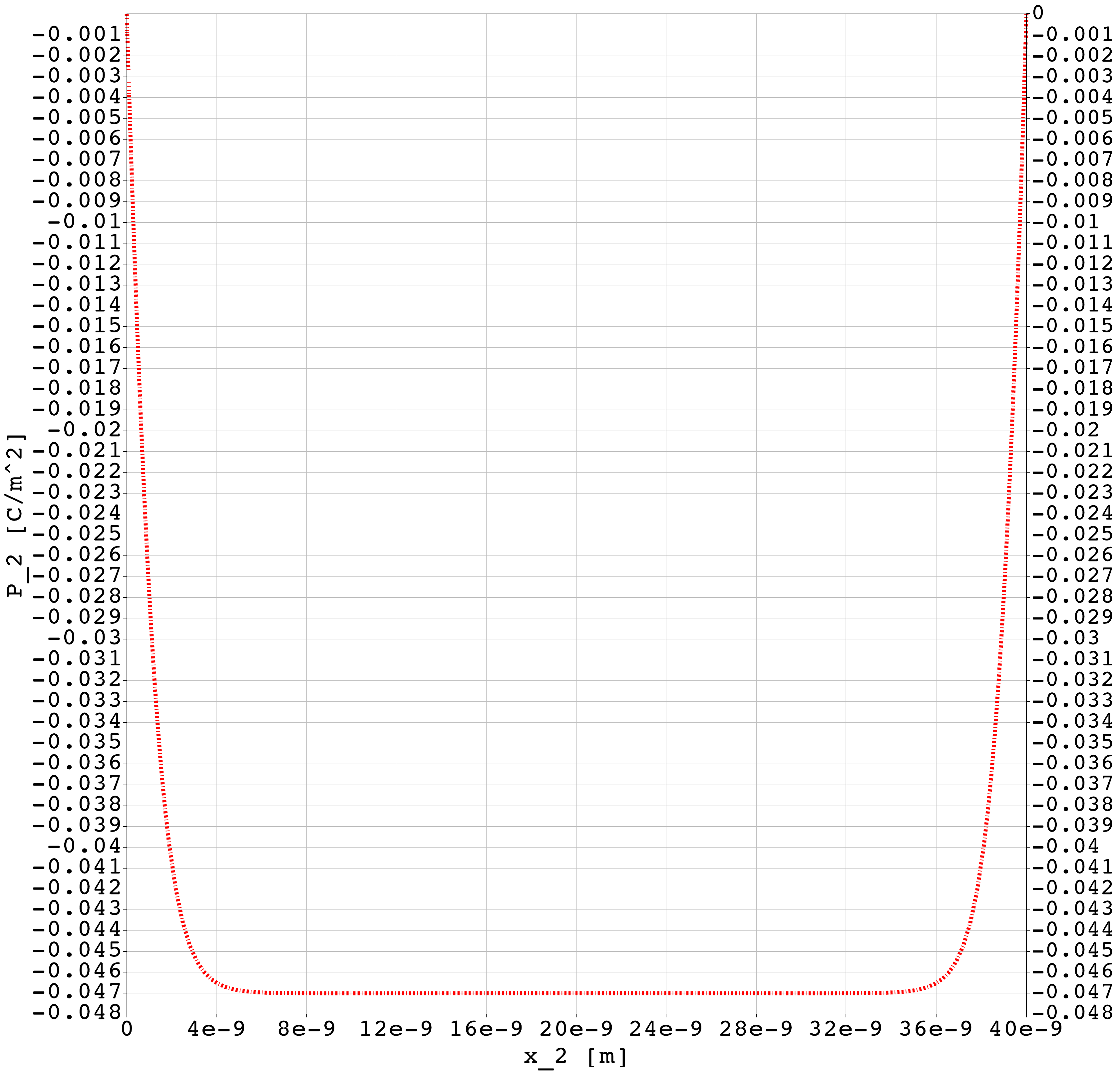} & \includegraphics[scale=0.062]{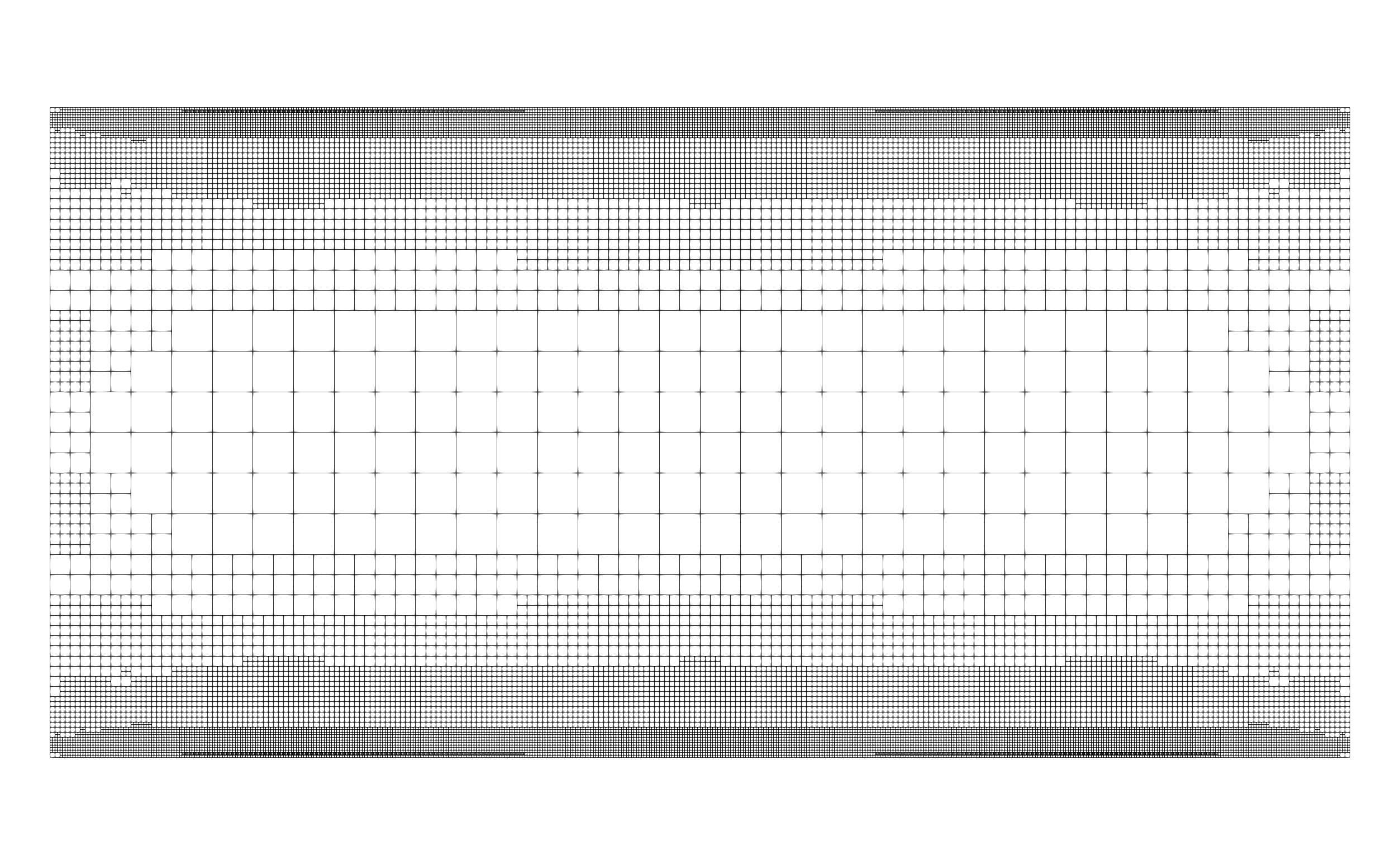} \\
    \end{tabular}
  \end{center}
  \caption{An integration test: (left column) the $x_2$ component of the approximated polarization field (\ie $\widehat{\vecP}^n_h$) along $x_1 = 40ns$ at time $t=20j\, ns$ with $j=1,\ldots,5$ (from top to bottom) and (right column) the corresponding background subdivisions.}
  \label{f:polarization-zero}
\end{figure}
\begin{figure}[hbt!]
  \begin{center}
    \begin{tabular}{cc}
      \includegraphics[scale=0.062]{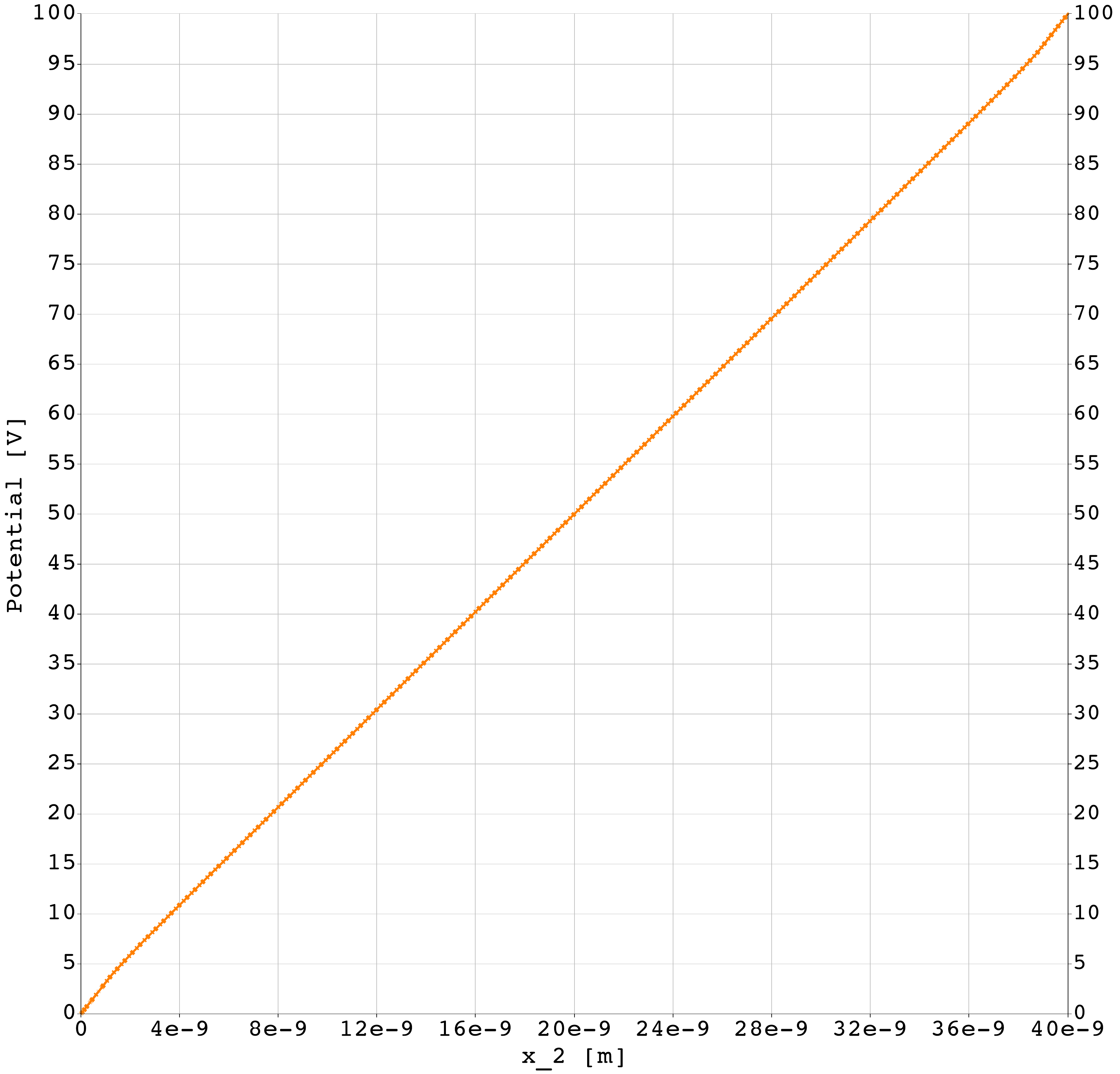}  & \includegraphics[scale=0.062]{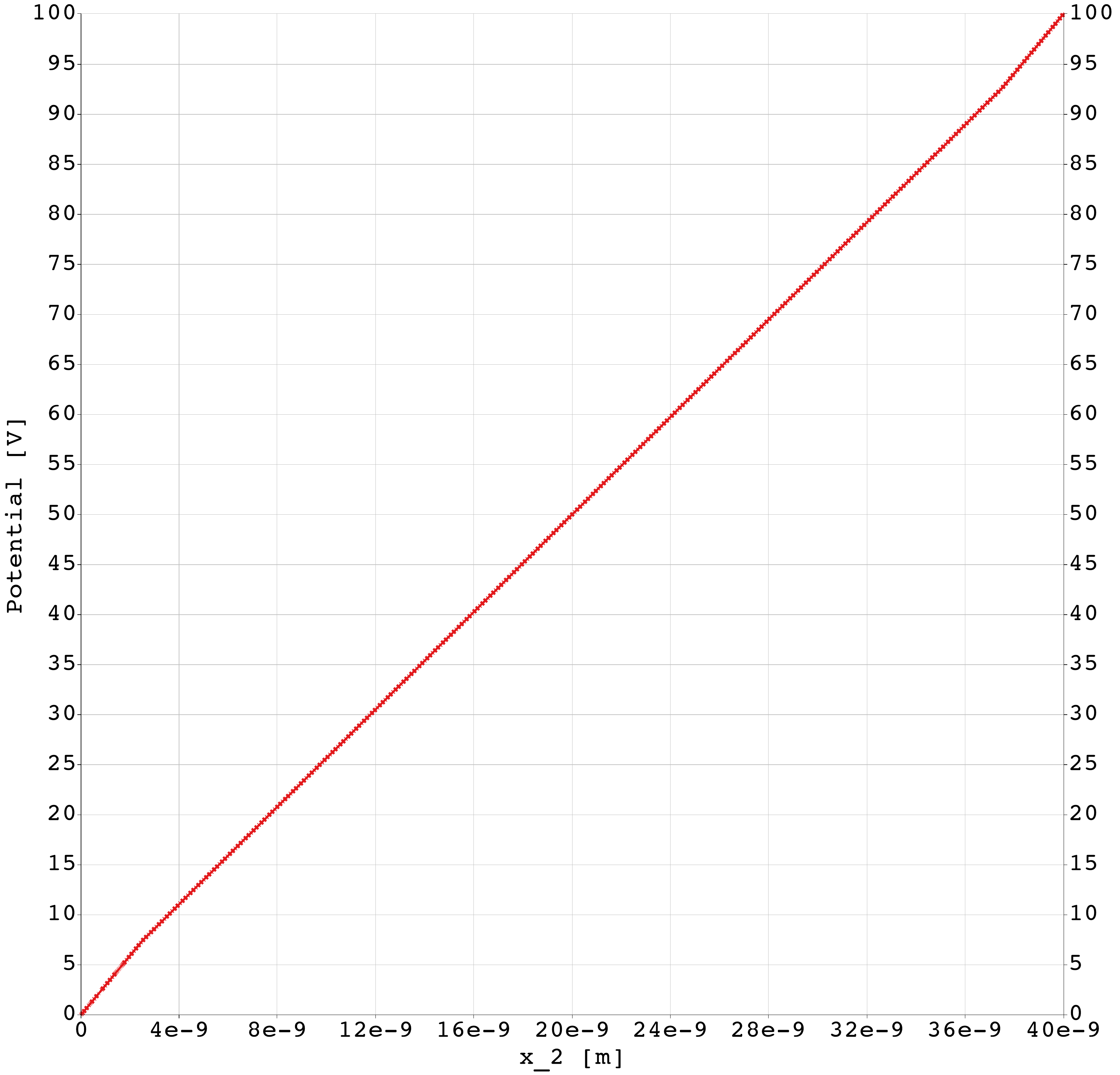}  \\
      \includegraphics[scale=0.062]{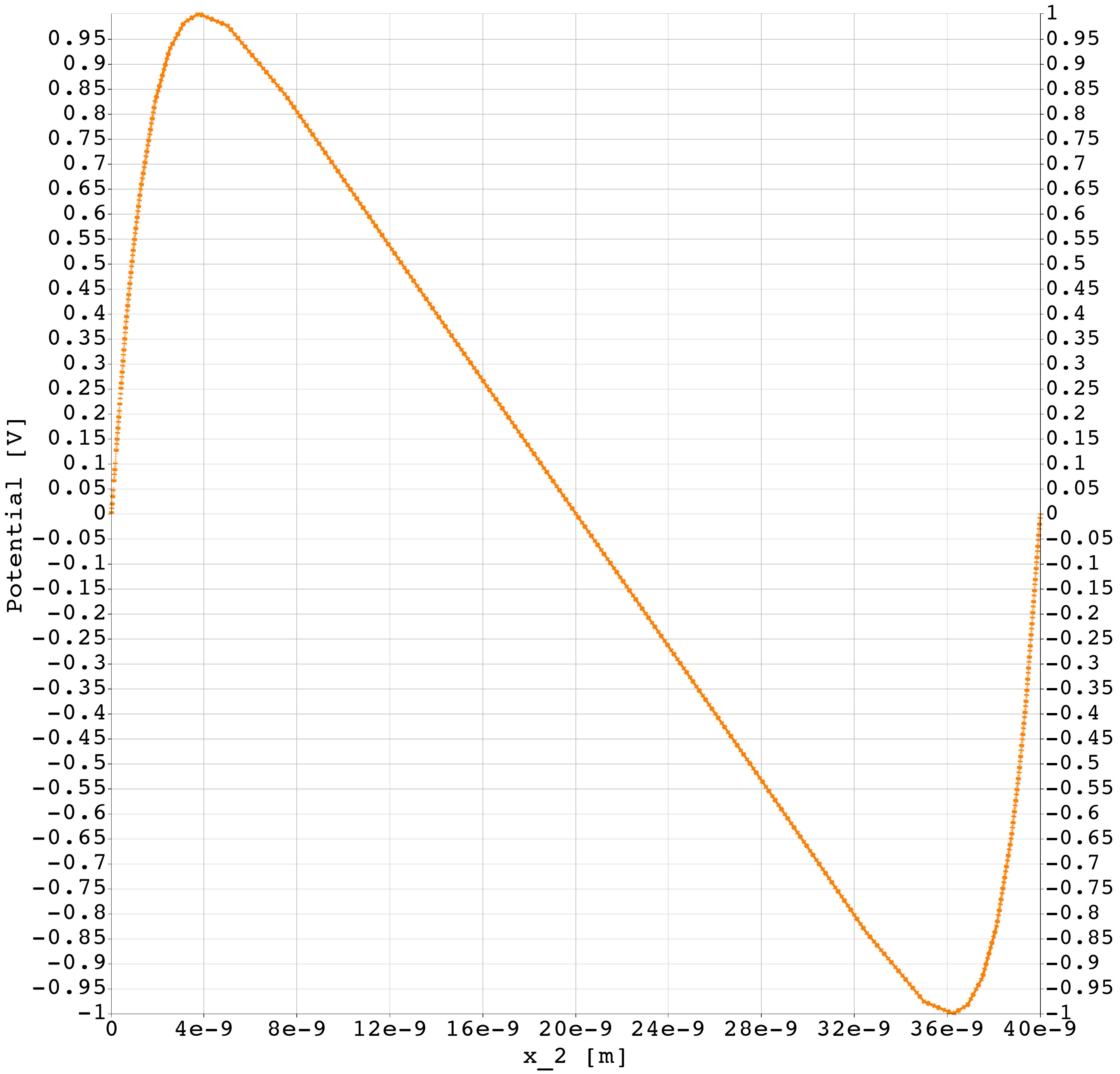}  & \includegraphics[scale=0.062]{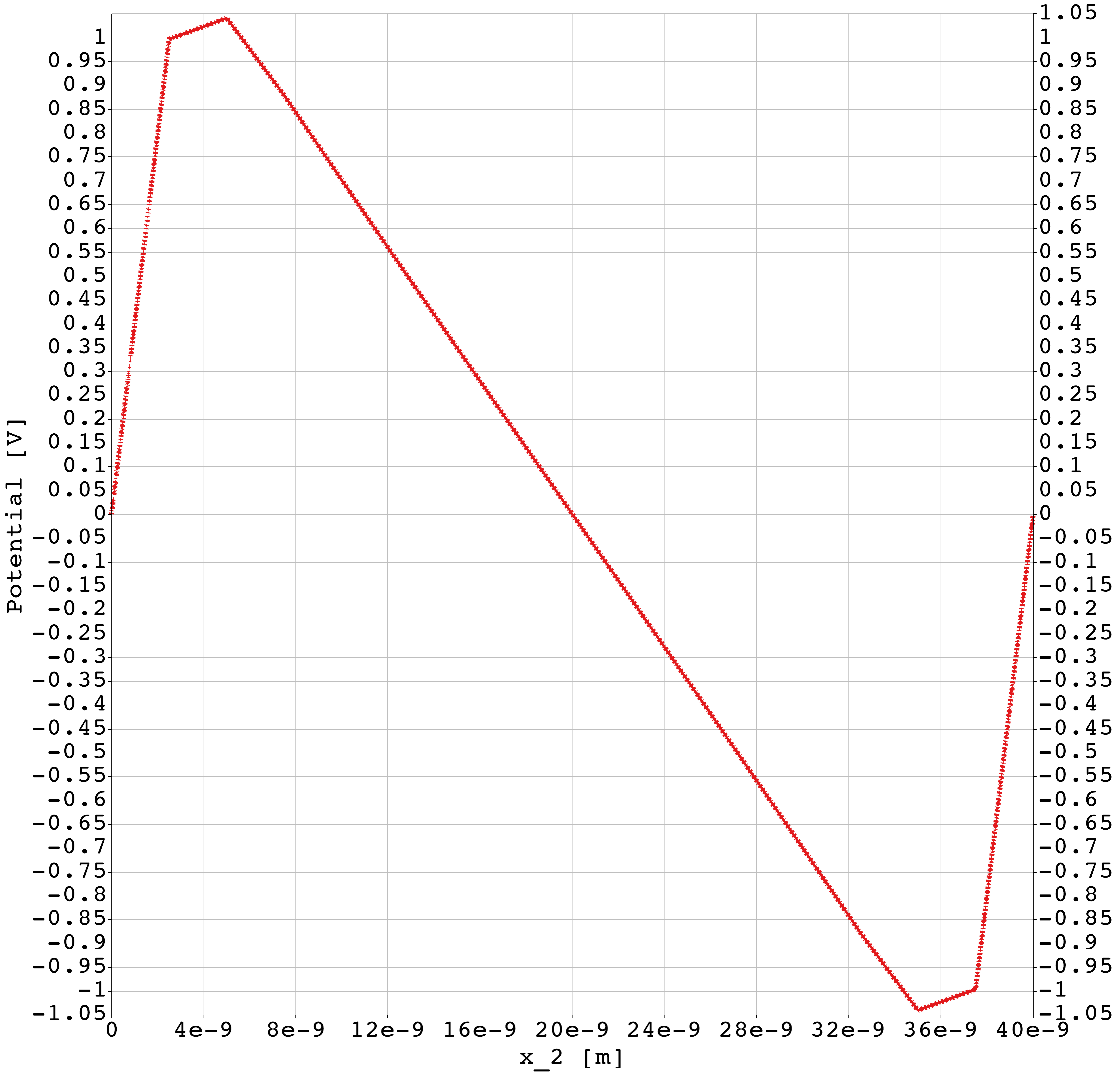}  \\
      \includegraphics[scale=0.062]{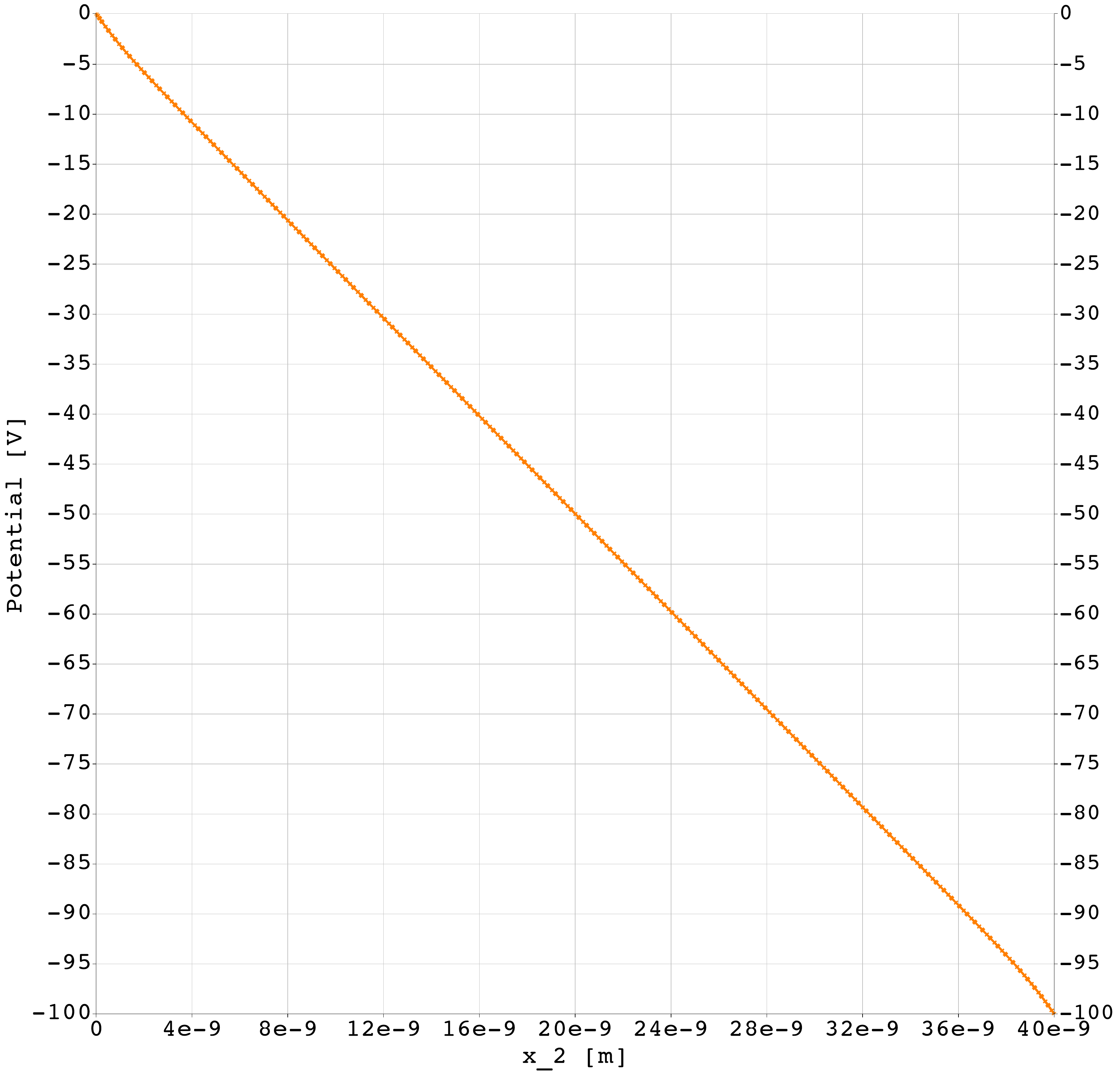}  & \includegraphics[scale=0.062]{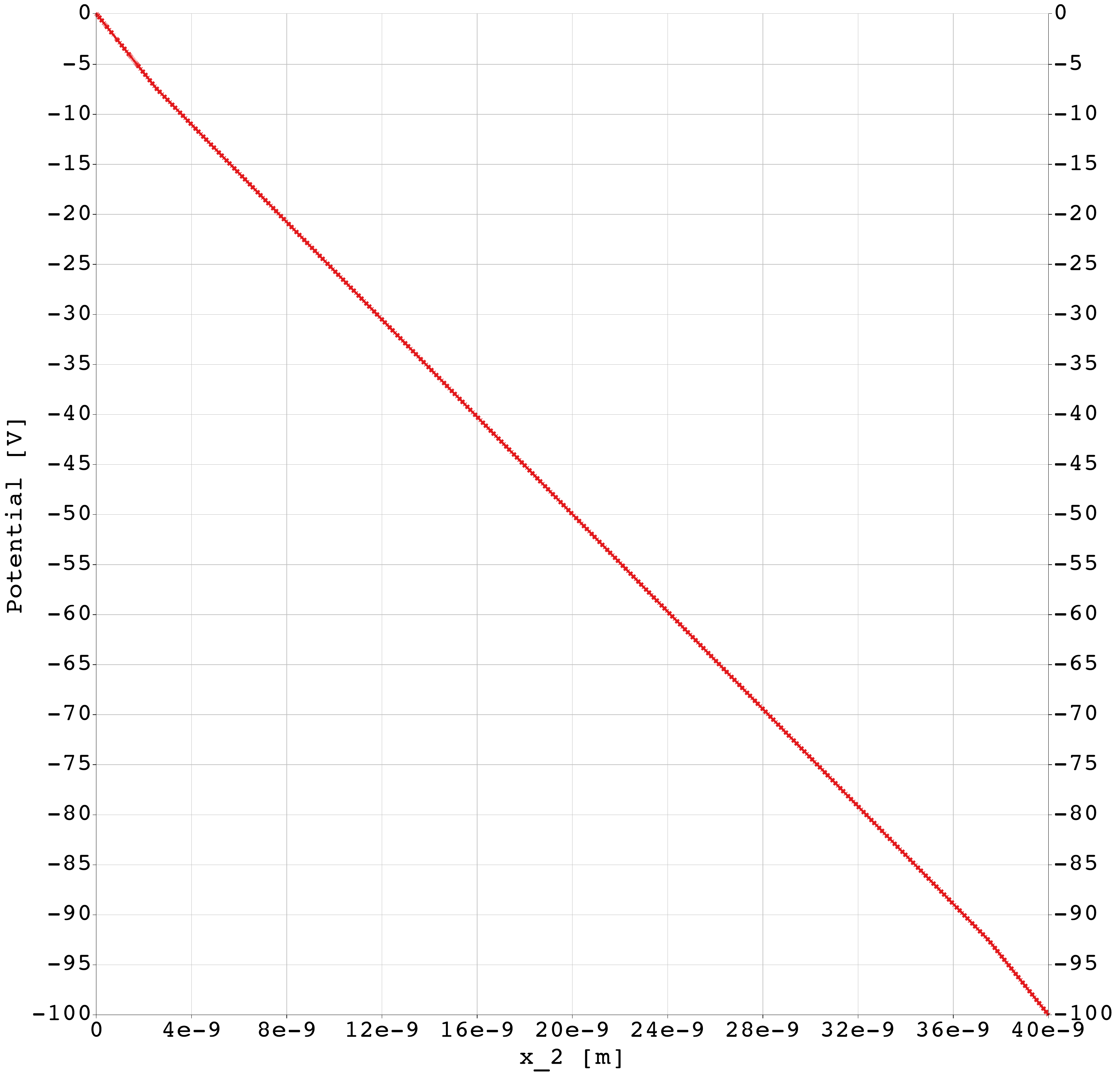}  \\
      \includegraphics[scale=0.062]{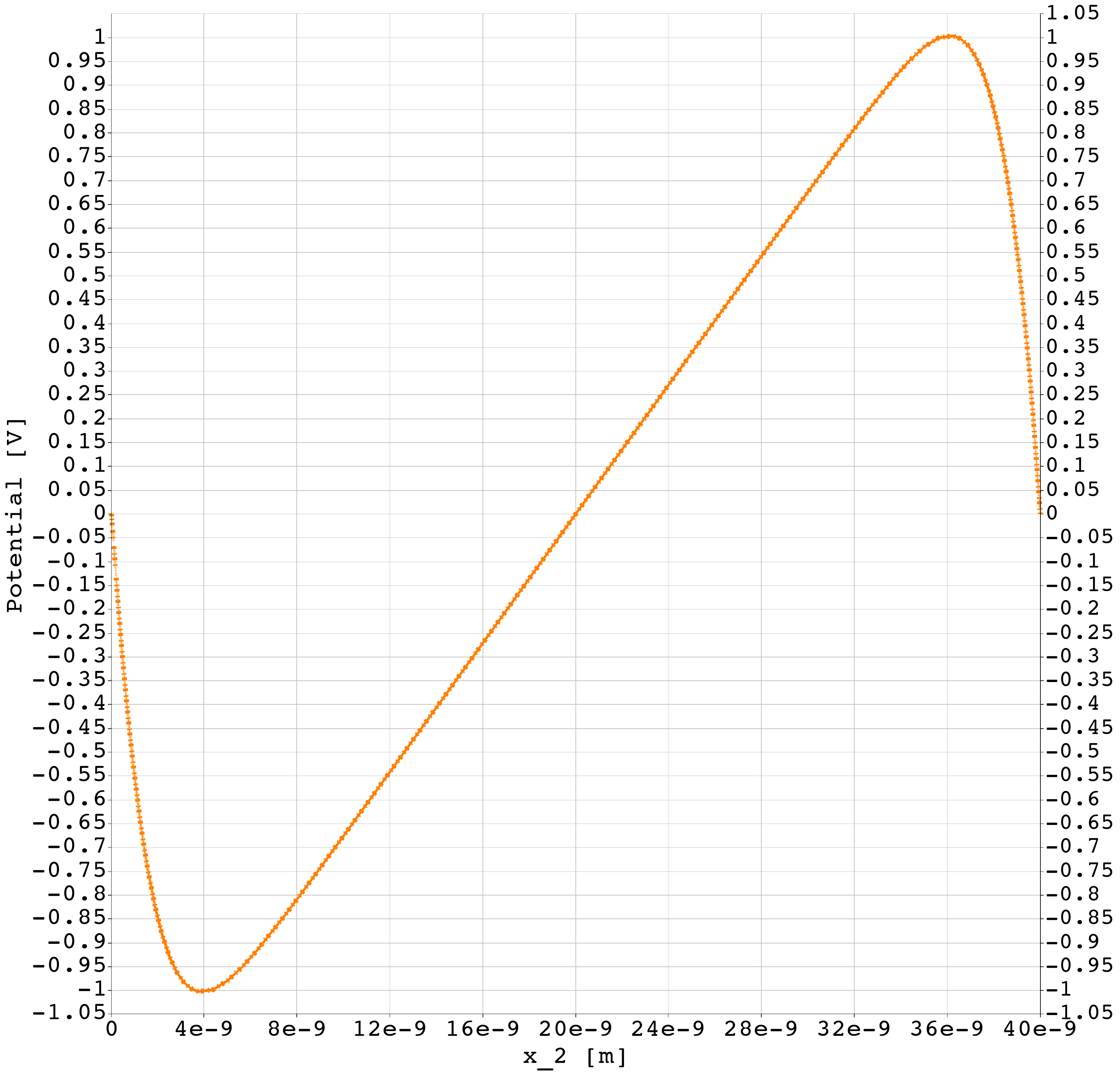}  & \includegraphics[scale=0.062]{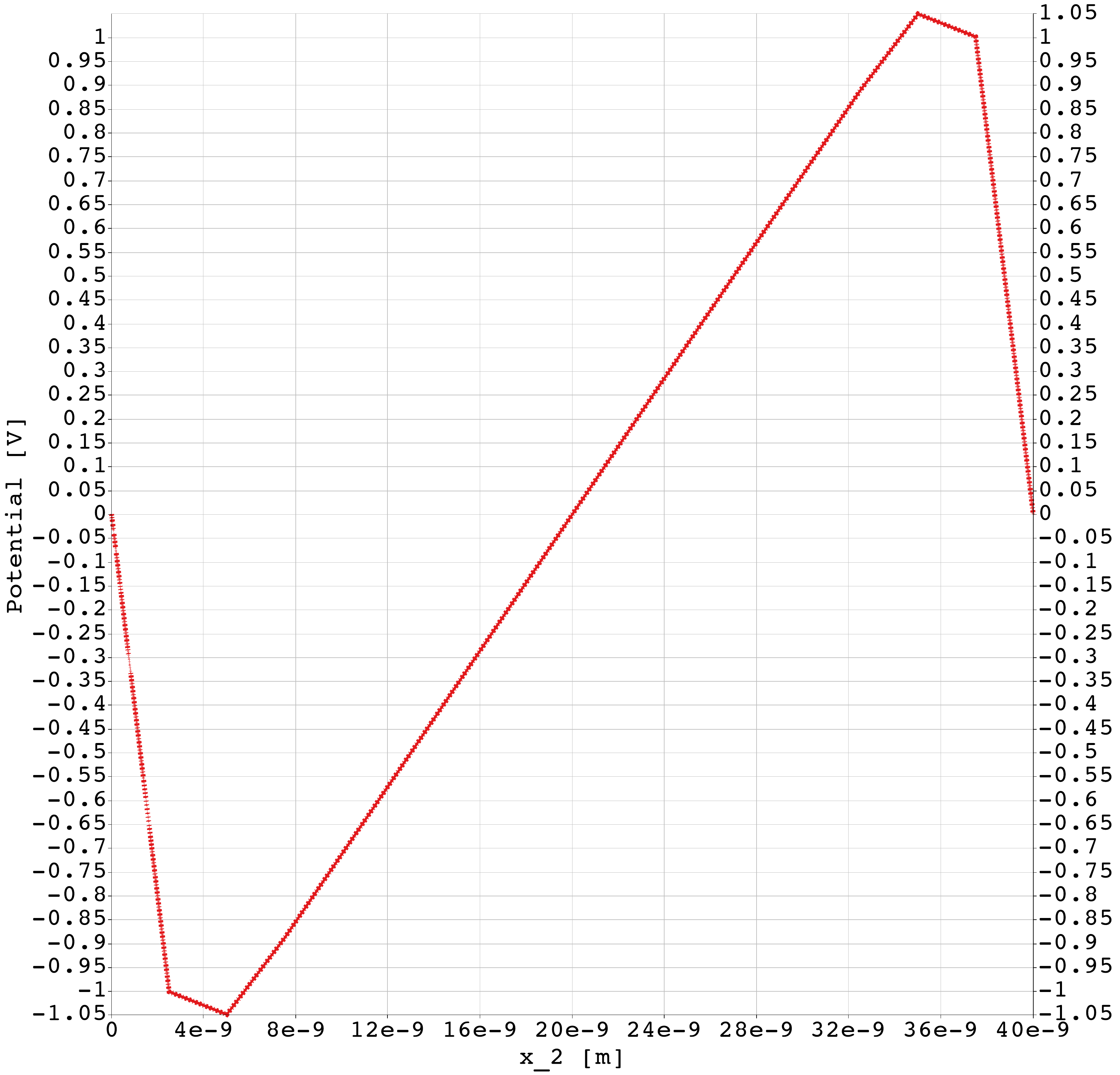}  \\
      \includegraphics[scale=0.062]{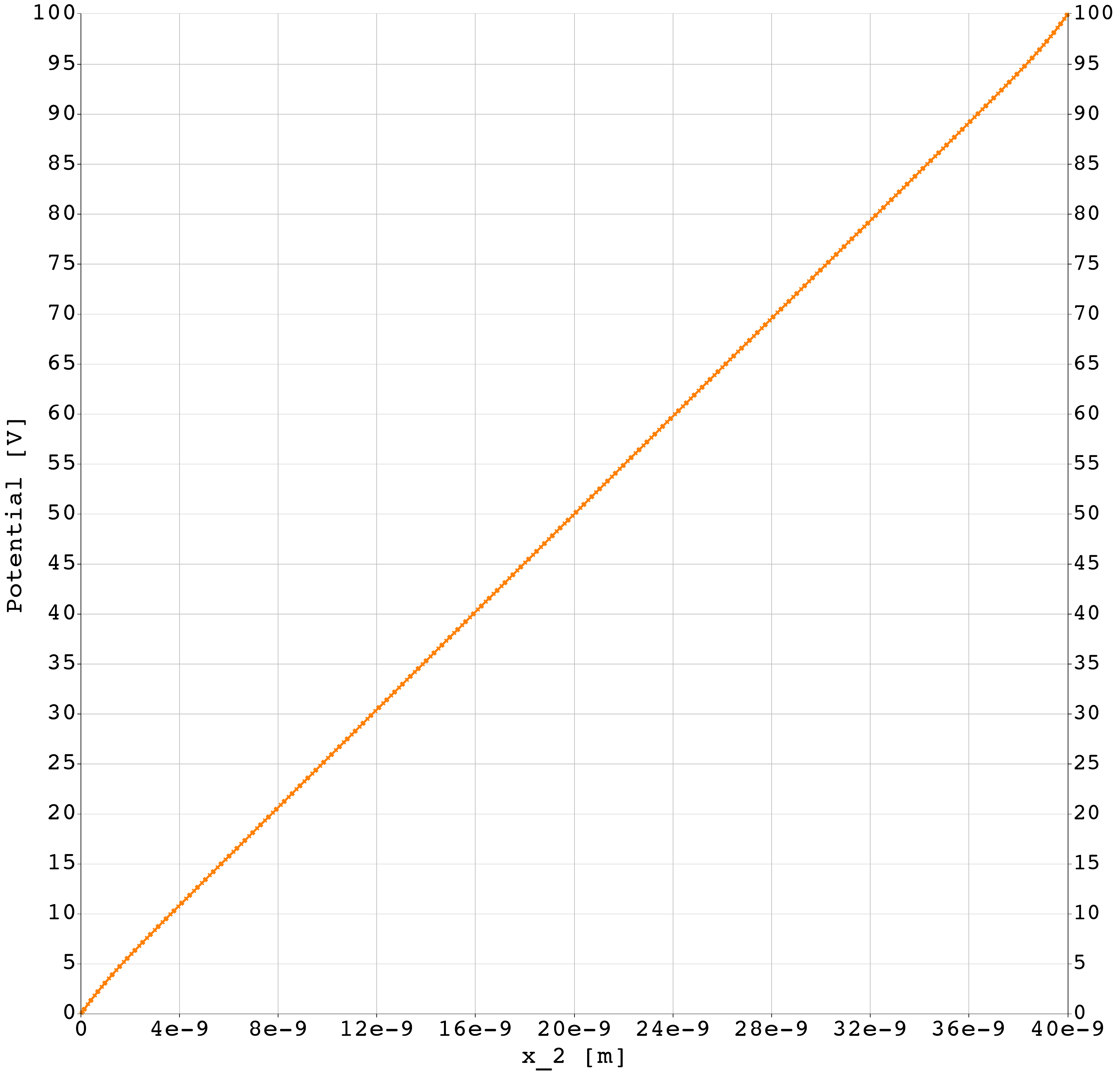} & \includegraphics[scale=0.062]{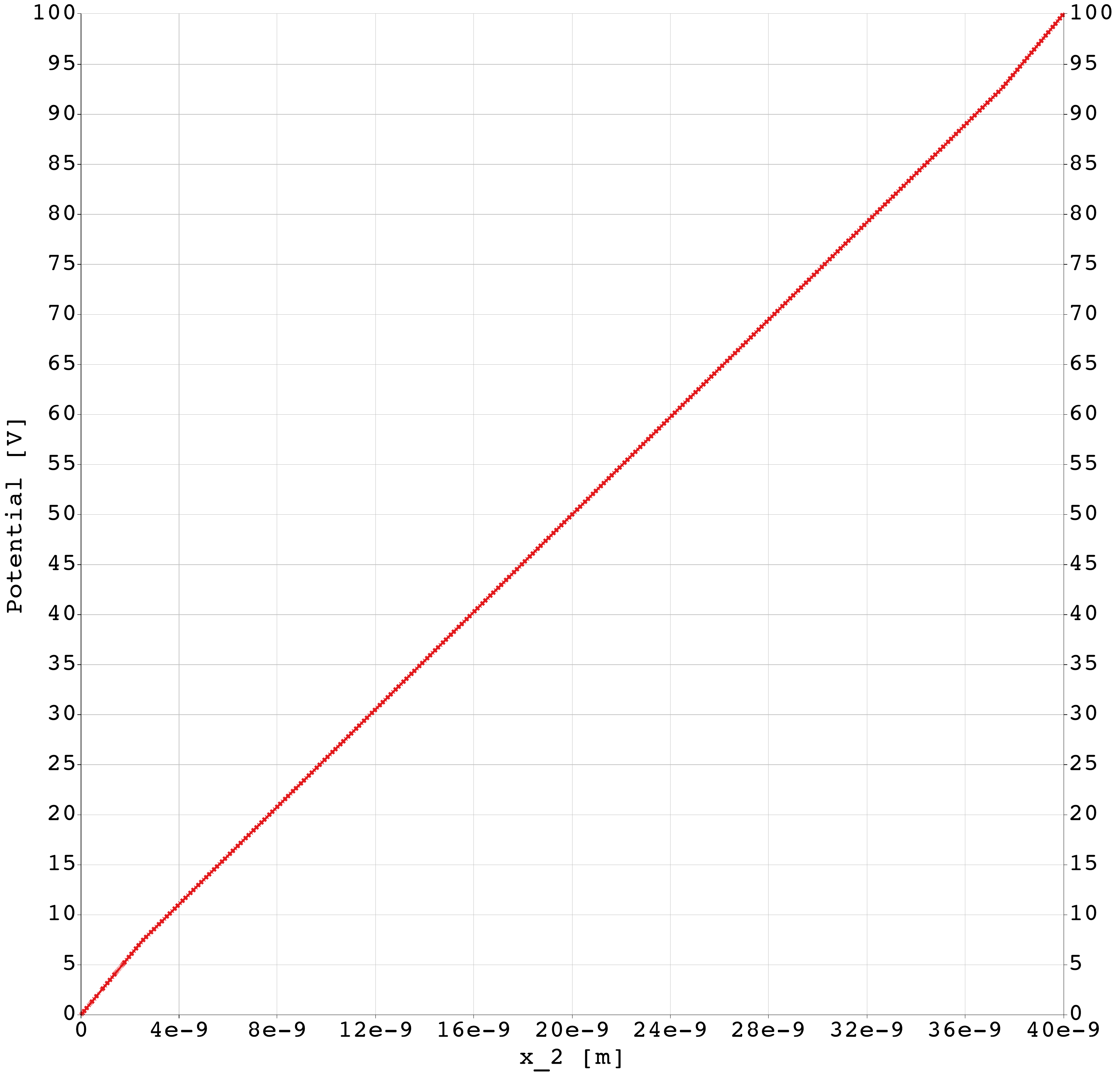} \\
    \end{tabular}
  \end{center}
  \caption{An integration test:  the approximated potential (\ie $\widehat{V}^n_h$) along $x_1 = 40ns$ at time $t=20j\, ns$ with $j=1,\ldots,5$ (from top to bottom) using the adaptive grids from Figure~\ref{f:polarization-nonzero} (left column) and the corase grid from Figure~\ref{f:mesh} (right column).}
  \label{f:potential}
\end{figure}

\section{Conclusion}\label{s:conclusion}
We proposed a modified energy form $I(\vecP)$ based on Landau-Khalatnikov equation \eqref{e:ferroelectric} and show that this energy form has at least one minimizer. We also show that the weak form of \eqref{e:ferroelectric} admits at least one solution if the initial polarization field is in $[H^1(\Omega)\cap L^\infty(\Omega)]^d$ and charge distribution is in $H^1(0,T;L^\infty(\Omega))$. The solution is unique if the $L^2(\Omega)$-norm of $\Delta \vecP_0$ is finite. We designed an energy stable time-stepping scheme and we use an HDG scheme to discretize \eqref{e:ferroelectric} in space. The numerical simulations verifies that the $L^2(\Omega)$-error for the primal variables converges in the rate $O(h^{k+1}+\tau)$, with $h$ and $\tau$ denoting the sizes in space grid and time grid, respectively. The future work includes the convergence analysis for the \ac{HDG} scheme \eqref{e:full-system}--\eqref{e:transmission-discrete} as well as the analysis from the discrete solution to the minimizer of $I(\vecP)$.

%
%
There are no funders to report for this submission.

\bibliographystyle{plain}      

\bibliography{reference}
\end{document}